
\documentclass[bj,preprint]{imsart}

\usepackage{xcolor}
\usepackage{amsmath}
\usepackage{amsfonts}
\usepackage{amsthm}
\usepackage{amssymb}
\usepackage{graphicx}
\usepackage[colorlinks=true,linkcolor=blue]{hyperref}
\usepackage{blkarray}
\usepackage{mathrsfs}
\usepackage[breakable, theorems, skins]{tcolorbox}
\usepackage{xargs}
\usepackage{enumerate}

\usepackage{ifthen}
\newcommand{\isdraft}{\boolean{true}} 
\renewcommand{\isdraft}{\boolean{false}} 
\ifthenelse{\isdraft}{
    \usepackage[color]{showkeys}
    \usepackage{xcolor}
}{}
\newcommand{\markupdraft}[2]{
    \ifthenelse{\equal{#1}{display}}{#2}{}
    \ifthenelse{\equal{#1}{color}}{\color{#2}}{}
}
\newcommand{\nnotecolored}[3][]{\markupdraft{display}{{\color{#2}\noindent[{\bf Note #1}: #3]}}}
\newcommand{\mathnote}[1]{\markupdraft{display}{{\color{brown}\noindent[{\bf Math note}: #1]}}}
\newcommand{\newcolored}[3][]{{\markupdraft{color}{#2}#3}
    \ifthenelse{\equal{#1}{}}{}{\markupdraft{display}{{\color{yellow!70!black}[#1]}}}} 

\providecommand{\del}[2][]{{\markupdraft{display}{{\color{red!20!yellow}[rmed: "#2"[#1]]}}}} 
\providecommand{\new}[2][]{\newcolored[#1]{blue}{#2}}
\providecommand{\rem}[2][]{\nnotecolored[#1]{green}{#2}} 
   
\providecommand{\TODO}[2][]{\markupdraft{display}{{\color{red}~\noindent== TODO: #2 {\color{yellow}(#1)} ==}}}

\providecommand{\todo}[2][]{\markupdraft{display}{{\color{red}\noindent++TODO: #2 {\color{yellow}(#1)}++}}}

\ifthenelse{\isdraft}{}{\renewcommand{\markupdraft}[2]{}}


\startlocaldefs

\newcommand{\anne}[1]{\rem[Anne]{\color{orange}#1}}
\newcommand{\alex}[1]{\rem[Alex]{\color{green}#1}}

\newcommand{\bs}[1]{\ensuremath{\mathbf{#1}}}
\newcommand{\markov}{\ensuremath{\Phi}}
\newcommand{\pt}[1][t]{\ensuremath{\Phi_{#1}}}

\newcommand{\pk}[1][k]{\Phi_{#1}}
\newcommand{\pkk}{\pk[k+1]}
\newcommand{\W}{{W}}
\newcommand{\w}{\bs{w}}
\newcommand{\uu}{\bs{u}}
\renewcommand{\u}{\bs{u}}
\renewcommand{\v}{\bs{v}}

\newcommand{\U}{U}
\newcommand{\Ut}[1][k]{\ensuremath{{\U}_{#1}}}
\newcommand{\Utt}{\Ut[k+1]}
\newcommand{\Uk}[1][k]{\U_{#1}}
\newcommand{\Ukk}{\Uk[k+1]}

\newcommand{\R}{\ensuremath{\mathbb{R}}}
\newcommand{\N}{\ensuremath{\Zplus}}
\newcommand{\borel}{\mathscr{B}}
\newcommand{\mleb}{\mu^{\textrm{Leb}}}
\newcommand{\1}{\mathbf{1}}

\newcommand{\weight}{\beta}
\newcommand{\ZZ}{{\bs {\cal Z}}}

\newcommand{\x}{x}
\newcommand{\X}{{X}}
\newcommand{\Xt}[1][k]{\X_{#1}}
\newcommand{\Xtt}{\Xt[k+1]}

\newcommand{\perm}{\mathcal{S}}

\newcommand{\y}{{y}}

\newcommand{\st}[1][k]{\sigma_{#1}}
\newcommand{\Idn}{\textrm{I}_n}

\newcommand{\stt}{\st[k+1]}
\newcommand{\Z}{{Z}}
\newcommand{\Zt}[1][k]{\Z_{#1}}
\newcommand{\Ztt}{\Zt[k+1]}

\newcommand{\z}{\bs{z}}

\newcommand{\0}{\bs{0}}

\newcommand{\xdom}{\mathcal{X}}
\newcommandx{\Oxk}[2][1=k,2=x]{{\mathscr O}_{#2}^{#1}}
\newcommandx{\pxk}[2][1=k,2=x]{p_{#2}^{#1}}

\newcommandx{\Sxk}[2][1=k,2=x]{S_{#2}^{#1}}

\newcommand{\xstar}{x^*}
\newcommand{\wstar}{\w^*}
\newcommand{\ystar}{y^*}

\newcommand{\Uopen}{\mathscr{U}}
\newcommand{\Vopen}{\mathscr{V}}

\newcommand{\Wtt}{\W_{k+1}}

\newcommand{\OOpen}{\mathscr{O}}

\newcommand{\Oo}{\mathscr{O}}
\newcommand{\Ro}{\mathscr{R}}
\newcommand{\Uo}{\mathscr{U}}
\newcommand{\Vo}{\mathscr{V}}
\newcommand{\Xo}{\mathscr{X}}
\newcommand{\Wo}{\mathscr{W}}

\newcommand{\Zplus}{\mathbb{Z}_{\geq 0}}
\newcommand{\Zplusstrict}{\mathbb{Z}_{> 0}}

\newcommandx{\Cxk}[2][1=k,2=x]{C_{#2}^{#1}}

\newtheorem{proposition}{Proposition}[section]
\newtheorem{lemma}{Lemma}[section]
\newtheorem{theorem}{Theorem}[section]

\newtheorem{corollary}{Corollary}[section]

\newtheorem{definition}{Definition}[section]
\newtheorem{example}{Example}
\newtheorem{remark}{Remark}

\DeclareMathOperator{\supp}{supp}
\DeclareMathOperator{\CM}{CM}
\DeclareMathOperator{\rank}{rank}

\endlocaldefs

\begin{document}

\begin{frontmatter}

\title{
Verifiable Conditions for the Irreducibility and Aperiodicity of Markov Chains by Analyzing Underlying Deterministic  Models
}

\runtitle{Conditions for Irreducibility and Aperiodicity via Underlying Deterministic  Models}

\begin{aug}
\author{\fnms{Alexandre} \snm{Chotard}\thanksref{a}\corref{bla}\ead[label=e1]{alexandre.chotard@gmail.com}}

\author{\fnms{Anne} \snm{Auger}\thanksref{b}\ead[label=e2]{anne.auger@inria.fr}}

\address[a]{Departement of Mathematics; 
KTH Royal Institute of Technology 
SE-10044 Stockholm - Sweden \printead{e1}}
\address[b]{RandOpt Team - Inria Saclay - \^{I}le-de-France, 
Ecole Polytechnique, Palaiseau - France 
\printead{e2}}

\runauthor{A. Chotard, A. Auger}


\end{aug}

\todo{\bf The new finish line!\\
\todo{link with 7.2.2 meyn tweedie}

}

\begin{abstract}
We consider Markov chains that obey the following general non-linear state space model: 
$\pt[k+1] = F(\pt[k], \alpha(\pt[k], U_{k+1}))$ where the function $F$ is $C^1$ while $\alpha$ is typically discontinuous and $\{U_k: k \in \Zplusstrict \}$ is an independent and identically distributed process. We assume that for all $x$, the random variable $\alpha(x, U_1)$ admits a density $p_x$ such that $(x, w) \mapsto p_x(w)$ is lower semi-continuous.

We generalize and extend previous results that connect properties of the underlying deterministic control model to provide conditions for the chain to be $\varphi$-irreducible and aperiodic. By building on those results, we show that if a rank condition on the controllability matrix is satisfied for all $x$,  there is equivalence between the existence of a globally attracting state for the control model and $\varphi$-irreducibility of the Markov chain. Additionally, under the same rank condition on the controllability matrix, we prove that there is equivalence between the existence of a steadily attracting state and the $\varphi$-irreducibility and aperiodicity of the chain. The notion of steadily attracting state is new. We additionally derive practical conditions by showing that the rank condition on the controllability matrix needs to be verified only at a globally attracting state (resp.\ steadily attracting state) for the chain to be a $\varphi$-irreducible T-chain (resp.\ $\varphi$-irreducible aperiodic T-chain).

Those results hold under considerably weaker assumptions on the model than previous ones that would require  $(x,u) \mapsto F(x,\alpha(x,u))$ to be $C^\infty$ (while it can be discontinuous here). Additionally the establishment of a \emph{necessary and sufficient} condition for the $\varphi$-irreducibility and aperiodicity without a structural assumption on the control set is novel---even for Markov chains where $(x,u) \mapsto F(x,\alpha(x,u))$ is $C^\infty$. \anne{I add here the structural assumption - because of the connected control set setting}

We illustrate that the conditions are easy to verify on a non-trivial and non-artificial example of Markov chain arising 
in the context of adaptive stochastic search algorithms to optimize continuous functions in a black-box scenario.

\end{abstract}

\end{frontmatter}



\section{Introduction}\label{sec:intro}

Markov chain theory is widely applied for analyzing methods arising in different domains like machine learning, time series analysis, statistics or optimization.
Prior to establishing stability properties like geometric ergodicity or using sample path theorems, one often needs to prove basic standard properties such as $\varphi$-irreducibility and aperiodicity (see for instance the Law of Large Numbers in \cite[Theorem 17.0.1]{meyntweedie} or the aperiodic and geometric ergodic theorems \cite[Theorem 13.0.1 and 15.0.1]{meyntweedie}). \alex{$\varphi$-irreducibility is critical to the theory and is needed in many results, for example in Theorem 8.0.1 which guarantees that a Markov chain is either recurrent or transient.}
In addition, to prove the existence of an invariant probability  distribution or geometric ergodicity, it is often practical to use \emph{drift} conditions that roughly speaking state that outside a specific set, the conditional expected progress measured in terms of an appropriate non-negative potential function should be negative. The specific sets are typically so-called \emph{small sets} and they need to be identified in order to prove a drift condition.


Establishing $\varphi$-irreducibility, aperiodicity and identifying small sets can turn out to be very challenging. In the domain of time series analysis, this observation was already done and several works developed tools to facilitate this task. This includes\alex{specific but verifiable} conditions on the model parameters of specific time series (e.g.\ bilinear models~\cite{pham1986mixing} or non linear autoregressive time series~\cite{cline1998irreducibility}), general conditions on the underlying deterministic control model~\cite{meyncaines1989}, \cite[Chapter~7]{meyntweedie} or on small or petite sets~\cite{cline2011irreducibility,chan1990tong}. One notable work in the latter direction is presented in \cite{cline2011irreducibility} where the equivalence between $\varphi$-irreducibility, aperiodicity and the $T$-chain property with conditions on reachable petite or small sets has been shown. Remarkably, these results hold under weak conditions (at most weak-Feller) for bounded positive kernels. However, in practice it can be difficult to show that a set is small or petite without having first shown that the Markov chain is a $T$-chain. Hence these conditions, albeit weak, can turn out to be difficult to verify. Small sets can be shown to exist using a rank condition on the controllability matrix as shown in \cite[Proposition~2.1]{meyn-caines1991} or Proposition~\ref{pr:2.1} in this paper, and then the results obtained in \cite{cline2011irreducibility} are very similar to the ones presented here.

For Markov chains following a non-linear state space model of the form
\begin{equation} \label{eq:gmodel}
\pkk = G(\pk, \Ukk ),  k \in \N,
\end{equation}
where $G: \xdom \times \R^p \to \xdom$ with $\xdom \subset \R^n$ open is $C^\infty$ and $\{ U_k : k \in \Zplusstrict \}$ is an independent identically distributed (i.i.d.) process, independent of $\Phi_0$, some practical tools for proving $\varphi$-irreducibility, aperiodicity and identify that compact are small sets rely on investigating the underlying deterministic \emph{control model}~\cite[Chapter~7]{meyntweedie}, \cite{meyncaines1989,meyn-caines1991}. They connect structural and stability properties of the deterministic control model to structural and stability aspects of the associated Markov chains.\alex{Stability is probably misguiding here (I would be thinking of invariant measure/ergodicity). }\anne{I took this phrasing in the Meyn - Tweedie ... If we put only structural, it looks wrong. Typically existence of a globally attracting state is really a notion of stability. Think about dynamic systems for instance ...} In contrast to the works mentioned above \cite{cline2011irreducibility,chan1990tong}, the particularly attracting feature is that the tools entail manipulating deterministic sequences of possible paths followed by the underlying deterministic algorithm and are thus relatively straightforward to verify, at the cost of being constrained to the model \eqref{eq:gmodel} where in particular $G$ is $C^\infty$. 

The assumption that $G$ is $C^\infty$ is quite restrictive for some non-linear state space models.
Particularly, many Markov chains arising in the context of adaptive randomized optimization algorithms are associated to functions $G$ that are discontinuous. 
Yet, we show in this paper that most of the results presented in \cite[Chapter~7]{meyntweedie} holding for chains following \eqref{eq:gmodel} generalize to a broader model presented below---that naturally arises in the context of adaptive comparison-based algorithms.
More precisely, we consider Markov chains following the model
\begin{equation} \label{eq:fmodel}
\pkk = F(\pk , \alpha(\pk , \Ukk)),  k \in \N ,
\end{equation}
where for all $k$, $\pk \in \xdom$ with $\xdom$ an open subset of $\R^n$, $U_k \in \R^m$ with $\{ \Uk: k \in \Zplusstrict\} $ an i.i.d. process,  $F: \xdom \times \R^p \to \xdom$ is a $C^1$ function, $\alpha : \xdom \times \R^m \to \R^p$ is a measurable function---that can typically be discontinuous---and for all $x \in \xdom$, the distribution $\mu_{x}$ of the random variable $\alpha(x, \U_1)$ admits a density $p_x()$ such that the function $(x, w) \mapsto p_x(w)$ is lower semi-continuous with respect to both variables. In the models \eqref{eq:gmodel} and \eqref{eq:fmodel}, $n, m, p$ belong to $\Zplusstrict$. Note that \eqref{eq:gmodel} and \eqref{eq:fmodel} are linked via the relation $G(x,u)=F(x,\alpha(x,u))$, so discontinuities of $\alpha$ may render $G$ discontinuous, hence not satisfying the smooth condition of \cite{meyntweedie}, while $F$ itself can be $C^1$ or smooth. Remark that model \eqref{eq:fmodel} is actually a strict generalization of \eqref{eq:gmodel}: by setting $\alpha(x,u) = u$ in \eqref{eq:fmodel} we indeed recover \eqref{eq:gmodel} such that we will talk about a single model---model \eqref{eq:fmodel}---that can reduce to \eqref{eq:gmodel} if $\alpha(x,u)=u$.



\newcommand{\Normal}{\mathcal{N}(0,1)}
\paragraph{Toy Examples} To motivate the general model \eqref{eq:fmodel}, consider first an additive random walk on $\R$ defined by choosing for $\Phi_0$ an arbitrary distribution on $\R$  and for all $k \in \Zplus$ by
\begin{equation}\label{eq:RW}
\pkk = \pk + U_{k+1}
\end{equation}
where $\{ U_k : k \in \Zplusstrict \} $ is an independent and identically distributed Gaussian process with each $U_k$ distributed as a standard normal distribution, that is $U_k \sim \Normal$ for all $k$. This Markov chain follows the model \eqref{eq:gmodel} with $G(x,u)=x+u$ and hence model \eqref{eq:fmodel} with $F(x,u)=x+u$ and $\alpha(x,u)=u$. The random variable $\alpha(x,U_1)$ admits the density
\begin{equation}\label{eq:densityToy1}
p(w) = p_{\mathcal{N}}(w) : = \frac{1}{\sqrt{2 \pi}} \exp( - w^2/2)  .
\end{equation}
\anne{This chain is strong Feller.}

We describe now a variation of this Markov chain where the update of $\pk$ is also additive and more precisely $F(x,w)=x+w$. 
Consider indeed a simple (naive) iterative optimization algorithm on $\R$, aiming at \emph{minimizing} an objective function $f: \R \to \R$ without using any derivatives of $f$. The algorithm delivers at each iteration $k$, an estimate of the optimum of the function encoded within the random variable $\Phi_k$. The first estimate $\Phi_0$ is sampled from an arbitrary distribution on $\R$. To obtain the estimate $\Phi_{k+1}$ from $\Phi_k$, two candidate solutions are sampled around $\Phi_k$
$$
\widetilde{\Phi}^i_{k+1} = \Phi_k + U_{k+1}^i, \mbox{ for } i=1, 2
$$
where for all $k \geq 1$, $\{U_{k}^i: i = 1, 2 \}$ are independent following each a normal distribution $\Normal$, and $\{ U_k=(U_k^1,U_k^2) : k \geq 1\}$ is an independent and identically distributed process. Those candidate solutions are evaluated on the objective function $f$ and ranked according to their $f$ values. A permutation that contains the index of the ordered candidate solutions is extracted, that is, $\perm$ is a permutation of the set $\{1,2\}$ such that\footnote{The unicity of the permutation can be guaranteed by imposing that if $f(\pt + U_{t+1}^{\perm(1)}) = f(\pt + U_{t+1}^{\perm(2)})$ then $\perm(1) = 1$.}
\begin{equation}\label{eq:selection}
f \left( \Phi_{k} + U_{k+1}^{\perm(1)} \right)  \leq f\left( \Phi_k + U_{k+1}^{\perm(2)} \right) .
\end{equation}
The new estimate of the solution corresponds to the best of the two sampled candidate solutions, that is 
\begin{equation}\label{eq:ex01}
\Phi_{k+1} = \Phi_k + U_{k+1}^{\perm(1)} .
\end{equation}
This algorithm is a simplification of some randomized adaptive optimization algorithms and particularly of evolution strategies \cite{book-chapter-niko-dirk-anne}. The update \eqref{eq:ex01} loosely drives $\Phi_k$ towards better solutions.  This algorithm is not meant to be a good optimization algorithm but serves as illustration for our general model. We will present more reasonable optimization algorithms in Section~\ref{sec:appli}.

Remark  that 
$
U_{k+1}^{\perm(1)} = ( U_{k+1}^1 - U_{k+1}^2) 1_{\{f(\Phi_k + U_{k+1}^{1}) \leq f(\Phi_k + U_{k+1}^2) \}} + U_{k+1}^2 
$
and define the function $\alpha: \R \times \R^2 \to \R$ as
\begin{equation}\label{eq:ex02}
\alpha(x,(u^1,u^2))= (u^1 - u^2) 1_{\{ f(x+u^1)\leq f(x+u^2) \}} + u^2
\end{equation}
such that
$
U^{\perm(1)}_{k+1} = \alpha(\Phi_k,(U_{k+1}^1,U_{k+1}^2))
$.
The update of $\Phi_k$ then satisfies
\begin{equation}\label{eq:example}
\Phi_{k+1} = \Phi_k + \alpha(\Phi_k,(U_{k+1}^1,U_{k+1}^2))  = F(\Phi_k,\alpha(\Phi_k,(U_{k+1}^1,U_{k+1}^2)))
\end{equation}
where $F(x,w) = x+w$. Suppose that $f(x)= x^2$, then the function $\alpha$ is discontinuous. (Indeed, for $u^1$ and $u^2$ different such that $f(x+u^1) = f(x+u^2)$, a small (continuous) change in $u^2$ can lead to $\alpha(x,(u^1,u^2))$ jumping from $u^1$ to $u^2$.) Hence, using the modeling via \eqref{eq:gmodel}, the corresponding function $G(x,u) = x + \alpha(x,u)$ with $u=(u^1,u^2)$ is discontinuous and does not satisfy the basic assumptions of the model such that the results presented in \cite[Chapter~7]{meyntweedie} cannot be directly applied. 


\newcommand{\NN}{\mathcal{N}}
For all $x$, the random variable $\alpha(x,U_1)$ admits a density equal to
\begin{equation}\label{eq:densityToy2}
p_x(w) = 2 p_\NN(w) \int 1_{\{(x+w)^2 < (x+u)^2 \}} p_\NN(u) du
\end{equation}
where $p_\NN(u) = \frac{1}{\sqrt{2 \pi}} \exp(- u^2/2)$ is the density of a standard normal distribution. The function $(x,w) \mapsto p_x(w)$ is continuous as a consequence of the Lebesgue dominated convergence theorem and hence lower semi-continuous. 

Those examples serve as illustration to the model underlying the paper. Yet $\varphi$-irreducibility, aperiodicity and identification of small sets for those Markov chains can be easily proven directly considering the transition kernel
\begin{equation}\label{eq:trans-kernel}
P(x,A) = \int 1_A(F(x+w)) p_x(w) d w =  \int 1_A( y) p_x(x-y) dy .
\end{equation}
We will present in Section~\ref{sec:appli} a more complex example---where $F$ is more complex and the density is lower semi-continuous but not continuous---where the tools developed in the paper are needed to easily prove $\varphi$-irreducibility, aperiodicity and identify small sets.

As sketched on this toy example, the function $\alpha(.,.)$ for defining a Markov chain following \eqref{eq:fmodel} naturally arises---in the context of randomized optimization---from the ``selection" of the step to update the state of the algorithm: in this simple example, we select the best step via \eqref{eq:selection}. This selection gives a discontinuous $\alpha$ function which leads to a  discontinuous underlying $G$ function.

While the Markov chains in \eqref{eq:RW} and \eqref{eq:example} look sensibly different, \new{we will see that} \del{we will show that} both chains have the same underlying deterministic (control) model and \del{that} consequently, the $\varphi$-irreducibility and aperiodicity of one chain implies the $\varphi$-irreducibility and aperiodicity of the other one. 

\paragraph{Summary of main contributions}

Our first contribution is to 
generalize the definition of the deterministic control model associated to a Markov chain following \eqref{eq:gmodel} \cite{meyncaines1989,meyn-caines1991,meyntweedie} to a Markov chain following \eqref{eq:fmodel} by extending the definition of the control set to a set of open sets---indexed by the initial conditions \new{and time steps}---where the extended lower semi-continuous densities $p_x^k()$ are strictly positive. Our first main result concerns the $\varphi$-irreducibility: We prove that under a proper controllability condition---formulated as a condition on the rank of the controllability matrix that needs to be satisfied for every $x$---the existence of a globally attracting state for the underlying deterministic control model is \emph{equivalent} to the Markov chain being $\varphi$-irreducible. This result generalizes to our context Proposition~7.2.6 presented in~\cite{meyntweedie}. It heavily relies on \cite[Theorem 2.1 (iii)]{meyn-caines1991} which can easily be transposed to our setting. We then establish a similar result for the $\varphi$-irreducibility and aperiodicity of the chain. We introduce the notion of steadily attracting state (implying global attractivity) and we prove that under the same controllability condition, the existence of a steadily attracting state is \emph{equivalent} to the $\varphi$-irreducibility and aperiodicity of the chain. This improves previous results derived for model \eqref{eq:gmodel} where the control set is assumed to be connected. 

We additionally derive practical conditions by showing that the rank conditions on the controllability matrix needs to be satisfied at a globally attracting state (resp.\ steadily attracting state) \emph{only} to imply the $\varphi$-irreducibility and T-chain property (resp.\ aperiodicity, $\varphi$-irreducibility and  T-chain property) of the Markov chain under the existence of a globally attracting (resp.\ steadily attracting) state. For the aperiodicity, our assumption of existence of a steadily attracting state is significantly weaker than the one of assymptotic controllability of the deterministic model made in \cite[Proposition~3.2]{meyncaines1989}.
We illustrate how to use the practical conditions on a non-trivial example arising in stochastic optimization.\\

This paper is organized as follows.  In Section~\ref{sec:background}, we introduce the Markov chain background necessary for the paper. In Section~\ref{sec:cm} we define the deterministic control model associated to the Markov chain and the different notions related to it.
%
In Section~\ref{sec:main} we present our main results related to $\varphi$-irreducibility and aperiodicity. In Section~\ref{sec:appli} we apply the results to the toy examples presented in this introduction and to a stochastic optimization algorithm.
%

\paragraph{Notations} We denote $\mathbb{Z}$ the set of integers
, $\Zplus$ the set of non negative integers $\{ 0, 1, 2, \ldots \}$ and $\Zplusstrict$ the set of \new{positive} integers \del{excluding zero} $\{ 1, 2, \ldots, \}$. We denote $\R_\geq$ the set of non-negative real numbers and $\R_>$ of  positive real numbers.
For $n \in \Zplusstrict$, we denote $\R^n$ the $n$-dimensional set of real numbers and $\mleb$ the $n$-dimensional Lebesgue measure. The Borel sigma-field of a topological space $\xdom$ is denoted $\borel(\xdom)$. For $x \in \R^n$ and $\epsilon >0$, $B(x, \epsilon)$ denotes the open ball of center $x$ and radius $\epsilon$. For $A \in \borel(\xdom)$, we denote $A^c$ the complement of $A$ in $\xdom$. 
For $u$ a real vector of $\R^n$, $u^T$ denotes the transpose of $u$. For $f$ and $g$ real-valued functions, following the Bachmann-Landau notation, $g = o(f)$ denotes that $g$ is a little-o of $f$.

\section{Background on Markov Chains} \label{sec:background}

In this paper, we consider the state-space $\xdom$ being an open subset of $\R^n$ and equipped with its Borel sigma-field $\borel(\xdom)$. A \emph{kernel} $K$ is a function on $(\xdom,\borel(\xdom))$ such that $ K(.,A)$ is measurable for all $A$ and for each $x \in \xdom$, $K(x,.)$ is a signed measure. 

A kernel $K$ is \emph{substochastic} if it is nonnegative and satisfies $K(x,\xdom) \leq 1$ \new{for all $x \in \xdom$}. It is a \emph{transition kernel} if it satisfies $K(x,\xdom)=1$ \new{for all $x \in \xdom$}.
Given a Markov chain $\Phi=\{\pk : k \in \Zplus \}$, we denote $P^k$, ${k \in \Zplusstrict}$, its \emph{$k$-step transition kernel} defined by
\begin{equation}
P^k(x,A) = \Pr( \Phi_k \in A | \Phi_0 = x )~, ~~x \in \xdom, ~A \in \borel(\xdom) .
\end{equation}
We shall be concerned with the question of \emph{$\varphi$-irreducibility}, that is whether there exists a non-trivial measure $\varphi$ on $\borel(\xdom)$ such that for all $A \in \borel(\xdom)$ with $\varphi(A) > 0$
$$
\sum_{k \in \Zplusstrict} P^k(x, A) > 0~, \textrm{ for all } x \in \xdom . 
$$
If such a $\varphi$ exists, the chain is called $\varphi$-irreducible. A $\varphi$-irreducible Markov chain admits a \emph{maximal irreducibility measure}, $\psi$, which dominates any other irreducibility measure, meaning for $A \in \borel(\xdom)$, $\psi(A) = 0 $ implies $\varphi(A)=0$ for any irreducibility measure $\varphi$ (see \cite[Theorem~4.0.1]{meyntweedie} for more details).

We also need the notion of T-chain defined in the following way.
Let $b$ be a probability distribution on $\Zplus$, and let $K_b$ denote the probability transition kernel defined by
\begin{equation}
K_b (x, A) := \sum_{k \in \N} b(k) P^k(x, A) ~,  x \in \xdom,  A \in \borel(\xdom) .
\end{equation}

Let $T$ be a substochastic transition kernel which satisfies
$$
K_b (x, A) \geq T(x, A)~,  \textrm{ for all } x \in \xdom,  A \in \borel(\xdom),
$$
and such that $T(\cdot, A)$ is a lower semi-continuous function for all $A \in \borel(\xdom)$. Then $T$ is called a \emph{continuous component} of $K_b$.
If a Markov chain $\markov$ admits a probability distribution $b$ on $\N$ such that $K_b$ possesses a continuous component $T$ satisfying $T(x, \xdom) > 0$ for all $x \in \xdom$, then $\markov$ is called a \emph{$T$-chain}.

A set $C \in \borel(\xdom)$ is called \emph{petite} if there exists $b$ a probability distribution on $\Zplus$ and $\nu_b$ a non-trivial measure on $\borel(\xdom)$ such that
$$
K_b (x, A) \geq \nu_b (A) ~,~~\textrm{for all } x \in C, ~A \in \borel(\xdom) .
$$
The set $C$ is then called a $\nu_b$-petite set.

Similarly, a set $C \in \borel(\xdom)$ is called \emph{small} if there exists $k \in \Zplusstrict$ and $\nu_{k}$ a non-trivial measure on $\borel(\xdom)$ such that
\begin{equation}
P^{k}(x,A) \geq \nu_{k}(A) ~,~~\textrm{for all } x \in C, ~ A \in \borel(\xdom).
\end{equation}
The set $C$ is then called a $\nu_{k}$-small set. Note that a $\nu_k$-small set is $\nu_{\delta_k}$-petite, where $\delta_k$ is the Dirac measure at $k$.

Last, we will derive conditions for a Markov chain following \eqref{eq:fmodel} to be aperiodic. We therefore remind the definition of an aperiodic Markov chain.
Suppose that $\markov$ is a $\varphi$-irreducible Markov chain. For $d \in \Zplusstrict$, let $(D_i)_{i = 1, \ldots, d} \in \borel(\xdom)^d$ be a sequence of disjoint sets. We call $(D_i)_{i = 1, \ldots, d}$ a \emph{$d$-cycle} if

$(\mathrm{i})$ $P(x, D_{i+1}) = 1$ for all $x \in D_i$ and $i = 0, \ldots, d-1$ (mod $d$),

$(\mathrm{ii})$ $\varphi\left( \left( \bigcup_{i=1}^{d} D_i \right)^c \right) = 0$ for all $\varphi$-irreducibility measure of $\markov$.

If $\markov$ is $\varphi$-irreducible, there exists a $d$-cycle with $d \in \Zplusstrict$~\cite[Theorem~5.4.4]{meyntweedie}. The largest $d$ for which there exists a $d$-cycle is called the \emph{period} of $\markov$. If the period of $\markov$ is $1$, then $\markov$ is called \emph{aperiodic}.

For more details on Markov chains theory we refer to \cite{meyntweedie,nummelin1984}.

\section{Deterministic Control Model: Definitions and First Results} \label{sec:cm}

From now on, we consider a Markov chain defined via \eqref{eq:fmodel}. 
We pose the following basic assumptions on the initial condition $\pk[0]$ and the disturbance process $\bs{U} := \{ \Uk : k \in \Zplusstrict \}$.
\begin{itemize}
\item [A1.] $(\pk[0], \bs{U})$ are random variables on a probability space $(\Omega, \mathcal{F}, P_{\pk[0]})$; 
\item [A2.] $\pk[0]$ is independent of $\bs{U}$;
\item [A3.] $\bs{U}$ is an independent and identically distributed process.
\end{itemize} 
We additionally assume that 
\begin{itemize}
\item [A4.] For all $x \in \xdom$, the distribution $\mu_{x}$ of the random variable $\alpha(x, \U_1)$ admits a  density $p_x()$, such that the function $(x,w) \mapsto p_x(w)$ is lower semi-continuous with respect to both variables;
\item[A5.] The function $F: \xdom \times \R^p \to \xdom$ is $C^1$.
\end{itemize}
\mathnote{The Vitali-Carathéodory theorem implies that for any $f \in L^1(\R^p, \borel(\R^p), dx)$ there exists a lower semi-continuous function $g \in L^1$ arbitrarily close to $f$ in $L^1$ norm. Also, in \cite{intelligentComputing} it is stated that any $f$ with finite total variation, there exists a lower semi-continuous function $g$ equal almost everywhere to $f$ }
\mathnote{A5 may be replaced by $F$ differentiable almost everywhere + $C^1$ on an open set containing the points $\xstar$ and $\wstar$ from the theorems.}

\subsection{Deterministic Control Model (CM(F))}
The attractive feature of the results presented in \cite[Chapter~7]{meyntweedie} for Markov chains following model \eqref{eq:gmodel} is that they entail manipulating deterministic trajectories of an underlying deterministic control model. The Markov chains considered in the present paper strictly generalize the Markov chains following \eqref{eq:gmodel} by simply assuming that $\alpha(x,u)=u$. We here generalize the underlying deterministic control model introduced for Markov chains following \eqref{eq:gmodel} to Markov chains following \eqref{eq:fmodel}.


We consider first the \emph{extended transition map} function  \cite{meyn-caines1991}  $\Sxk : \R^{pk} \to \xdom$ defined inductively for $k \in \Zplus$, $x \in \xdom$ and $\w = (w_1, \ldots, w_k) \in \R^{pk}$ by
\begin{align} \label{eq:sxk}
&\Sxk(\w) := F(\Sxk[k-1](w_1, \ldots, w_{k-1}), w_k),  k \in \Zplusstrict, \\
&\Sxk[0] := x . \nonumber
\end{align}
If the function $F$ is $C^k$ then the function $(x, \w) \mapsto \Sxk(\w)$ is also $C^k$ with respect to both variables (\del{see}Lemma~\ref{lm:scp}\del{for details}).

The extended probability density is the function $p_x^{k}$ defined inductively for all $k \in \Zplusstrict$, $x \in \xdom$ and $\w = (w_1, \ldots, w_{k}) \in \R^{pk}$ by
\begin{align} \label{eq:pxk}
&\pxk[k] (\w) := \pxk[k-1](w_1, \ldots, w_{k-1}) p_{S_x^{k-1}(w_1, \ldots, w_{k-1})}(w_{k}) \\
&\pxk[1] (w_1) := p_x(w_1). \nonumber 
\end{align}
Let $W_1 := \alpha(x, U_1)$ and $W_{k} := \alpha(\Sxk[k-1](W_1, \ldots, W_{k-1}), U_{k}) $ for all $k \in \Zplusstrict$, the extended probability function $p_x^k$ is a probability density function of $(W_1, W_2, \ldots, W_k) $. In the case where $\alpha(x, u) = u$, denoting $p$ the lower semi-continuous density of $U_1$ the extended probability density reduces to $p^k(\w)= p(w_1) \ldots p(w_k) $ (that is, in the case of a Markov chain that also follows \eqref{eq:gmodel}).
 
The function $(x, \w) \mapsto p_x^k(\w)$ is lower semi-continuous as a consequence of the lower semi-continuity of the function $(x, w) \mapsto p_x(w)$ and of the continuity of the function $(x, w) \mapsto F(x, w)$ (\del{see }Lemma~\ref{lm:plsc}\del{ for the details}). This implies that the set defined for all $k \in \Zplusstrict$ and for all $x \in \xdom$ as
\begin{equation}
\Oxk := \lbrace \w \in \R^{kp} | p_x^k(\w) > 0 \rbrace
\end{equation}
is open.
Remark that given that $p_x^k$ is a density, $\Oxk[k]$ is non-empty for all $k \in \Zplusstrict$.

In the case of a Markov chain following \eqref{eq:gmodel} modeled via $\alpha(x,u) = u$, the set $\Oxk$ is the $k$-fold product $\OOpen_{\!\xdom}^k$ where $\OOpen_{\!\xdom}$ is the open control set $\{ x : p(x) > 0\} $. 

The deterministic system
$$
\Sxk[k+1](w_1, \ldots,w_{k+1}) = F( \Sxk[k](w_1,\ldots,w_k) , w_{k+1}),  k \in \Zplusstrict
$$
for $ (w_1, \ldots, w_k,w_{k+1}) $ in the open set $ \Oxk[k+1]$
is the associated deterministic control model CM(F) for Markov chains following \eqref{eq:fmodel}. Given an initial condition $x \in \xdom$, the control model CM(F) is characterized by $F$ and the sets $\Oxk$ for $k \in \Zplusstrict$.
Remark that the control sequence $(w_1,\ldots,w_k) $ lies in the set $\Oxk[k]$ that depends on the initial condition $x$. In contrast when $\alpha(x,u)=u$, the control sequence $(w_1,\ldots,w_k) $ lies in $\Oo_{\xdom}^k$ which is independent of $x$. In analogy to this later case, the sets $\Oxk[k]$ are termed control sets.





\begin{example}[CM(F) for the toy examples]\label{example:CMFtoy}
For the Markov chains defined via \eqref{eq:RW} and \eqref{eq:example} and the densities \eqref{eq:densityToy1} and \eqref{eq:densityToy2}, CM(F) is defined by $F(x,w) = x+w$ and the control sets $\Oxk[k]$ that equal $\R^k$ for all $x \in \R$ and for all $k$.
\end{example}
Hence since both examples share the same $F$ and the same control sets $\Oxk[1]$, we will see later on that proving the $\varphi$-irreducibility or aperiodicity for both Markov chains via the conditions we derive in the paper is the same.

The extended transition map function and the extended probability density can be used to express the transition kernel of $\Phi$ in the following way
\begin{equation}
P^k(x,A ) = \int 1_A (\Sxk[k][x](\w) ) p_x^k(\w) d \w \, ,~ k \in \Zplusstrict, ~A \in \borel(\xdom) .
\end{equation}
This expression will be useful in many proofs.

For a given initial point $x \in \xdom$, we will consider ``deterministic paths" $\w$---where $\w$ is in a control set $\Oxk$---that bring to $A \in \borel(\xdom)$. We give more precisely the following definition of a \emph{k-steps path} from $x$ to $A$.
%

\begin{definition}[$k$-steps path]
For $\x \in \xdom$, $A \in \borel(\xdom)$ and $k \in \Zplusstrict$, we say that $\w \in \R^{kp}$  is a \emph{$k$-steps path} from $\x$ to $A$ if $\w \in \Oxk$ and $\Sxk(\w) \in A$.
\end{definition}

\todo{TA1: Feb 2017: Make a small paragraph that when we talk later on about points that can be reached, it's relating to the control sets, or the fact that we consider possible paths when the w belong to the control set.}
\todo{Todo: check the first time we talk about reaching states and clarify what is meant.}

\subsection{Accessibility, Attracting and Attainable States}
\alex{reachability?}

Following~\cite[Chapter~7]{meyntweedie}, for $x \in \xdom$ and $k \in \N$ we define $A_+^k(x)$, the set of all states that can be reached from $x$ in $k$ steps by CM(F), i.e.\ $A_+^0(x) := \lbrace x \rbrace$ and
$$
A_+^k(x) := \lbrace \Sxk(\w) | \w \in \Oxk \rbrace.
$$
The set of all points that can be reached from $x$ is defined as
$$
A_+(x) := \bigcup_{k \in \N} A_+^k(x) .
$$
If for all $x \in \xdom$, $A_+(x)$ has non empty interior, the deterministic control model CM(F) is said to be \emph{forward accessible}~\cite{controllabilityLieJakubczyk}.

\begin{example}\label{example:forward-accessible}
For the Markov chains \eqref{eq:RW} and \eqref{eq:example} with the common CM(F) model defined in Example~\ref{example:CMFtoy}, $A_+^1(x)=\R$ and thus $A_+(x)=\R$. Therefore CM(F) is forward accessible.
\end{example}

We remind now the definition of a \emph{globally attracting state} \cite[Chapter~7]{meyntweedie}.
A point $\xstar \in \xdom$ is called globally attracting if for all $y\in \xdom$,
\begin{equation} \label{eq:GA}
\xstar \in \Omega_+(y) := \bigcap_{N=1}^{+\infty} \overline{ \bigcup_{k=N}^{+\infty} A_+^k(y) }.
\end{equation}
As we show later on, the existence of a globally attracting state for CM$(F)$ is linked to the $\varphi$-irreducibility of the associated Markov chain. We establish here a basic yet useful proposition giving equivalent statements to the definition of a globally attracting state. This proposition will be heavily used in the different proofs.
\begin{proposition}[Characterization of globally attracting states] \label{pr:GA}
Suppose that $\markov$ follows model \eqref{eq:fmodel} and that conditions $\mathrm{A1}-\mathrm{A4}$ hold.
A point $\x^* \in X$ is a globally attracting state if and only if one of the three following equivalent conditions holds:
\begin{enumerate}
\item[$\mathrm{(i)}$] for all $\y \in \xdom$, $\xstar \in \overline{A_+(\y)}$,
\item[$\mathrm{(ii)}$] for all $\y \in \xdom$ and all open $\Uopen \in \borel(\xdom)$  containing $\xstar$, there exists $k\in \Zplusstrict$  and a $k$-steps path from $\y$ to $\Uopen$,
\item[$\mathrm{(iii)}$] for all $\y \in \xdom$, there exists a sequence $\{ \y_k : k \in \Zplusstrict\}$ with $\y_k \in A_+^k(\y)$ from which a subsequence converging to $\x^*$ can be extracted.
\end{enumerate}
\end{proposition}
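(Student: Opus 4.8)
The plan is to prove the chain of equivalences $\mathrm{GA} \Leftrightarrow \mathrm{(i)} \Leftrightarrow \mathrm{(ii)} \Leftrightarrow \mathrm{(iii)}$ by unwinding the topological definitions. First I would record the elementary set-theoretic fact that for any family of sets $(B_k)_{k\in\N}$ one has $\bigcap_{N\geq 1}\overline{\bigcup_{k\geq N} B_k} \subseteq \overline{\bigcup_{k\geq 0} B_k}$, which immediately gives $\mathrm{GA}\Rightarrow\mathrm{(i)}$ after taking $B_k = A_+^k(\y)$. The converse $\mathrm{(i)}\Rightarrow\mathrm{GA}$ is the substantive direction and is where I expect the main obstacle to lie: knowing $\xstar\in\overline{A_+(\y)}$ for every $\y$ does not obviously give $\xstar\in\overline{\bigcup_{k\geq N}A_+^k(\y)}$ for large $N$, because the points of $A_+(\y)$ approximating $\xstar$ might all be reached in bounded time. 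The key is the semigroup/concatenation property of CM$(F)$: if $\z\in A_+^j(\y)$ then $A_+^k(\z)\subseteq A_+^{j+k}(\y)$ (this follows from the inductive definition \eqref{eq:sxk} of $\Sxk$ together with the fact that concatenating a path in $\Oxk[j][\y]$ with a path in $\Oxk[k][\z]$ yields a path in $\Oxk[j+k][\y]$, using the inductive definition \eqref{eq:pxk} of $\pxk$). Given $\mathrm{(i)}$ and an open neighbourhood $\Uopen$ of $\xstar$, pick $\z\in A_+(\y)\cap\Uopen$; then $\z\in A_+^j(\y)$ for some $j$, and applying $\mathrm{(i)}$ at the point $\z$ (or simply noting $\z\in\overline{A_+(\z)}$ trivially and iterating) one produces points of $A_+^k(\y)$ in $\Uopen$ for arbitrarily large $k$, hence $\xstar\in\Omega_+(\y)$.

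Next I would handle $\mathrm{(i)}\Leftrightarrow\mathrm{(ii)}$, which is essentially a reformulation of closure membership. For $\mathrm{(i)}\Rightarrow\mathrm{(ii)}$: if $\xstar\in\overline{A_+(\y)}$ then every open neighbourhood $\Uopen$ of $\xstar$ meets $A_+(\y)$, so there is $k$ and $\w\in\Oxk[k][\y]$ with $\Sxk[k][\y](\w)\in\Uopen$, which is precisely a $k$-steps path from $\y$ to $\Uopen$. Conversely, if $\mathrm{(ii)}$ holds, then every open neighbourhood of $\xstar$ contains a point of some $A_+^k(\y)\subseteq A_+(\y)$, so $\xstar$ is in the closure. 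The only mild subtlety is that the definition of a $k$-steps path requires $\w$ to land in $\Uopen$ \emph{and} to lie in the control set $\Oxk[k][\y]$; since $A_+^k(\y)$ is defined exactly as $\{\Sxk[k][\y](\w):\w\in\Oxk[k][\y]\}$, these match up directly, and I would note that $A_+(\y)$ is non-empty because each $\Oxk[k]$ is non-empty (already observed in the text).

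Finally, for $\mathrm{(i)}\Leftrightarrow\mathrm{(iii)}$ I would use the metrizability of $\xdom\subseteq\R^n$. If $\mathrm{(i)}$ holds, then for each $k$ I need a point $\y_k\in A_+^k(\y)$ with a subsequence converging to $\xstar$; using the concatenation argument above (reaching a near-$\xstar$ point in some bounded time and then, from there, reaching a still-nearer point, padding with extra steps to realize each index $k$ exactly) I can arrange $\y_k\to\xstar$ along a subsequence. For the converse, a subsequence $\y_{k_j}\to\xstar$ with $\y_{k_j}\in A_+^{k_j}(\y)\subseteq A_+(\y)$ exhibits $\xstar$ as a limit of points of $A_+(\y)$, hence $\xstar\in\overline{A_+(\y)}$, giving $\mathrm{(iii)}\Rightarrow\mathrm{(i)}$. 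Since $\mathrm{(iii)}$ as written matches the original definition of $\Omega_+(\y)$ very closely (a convergent subsequence of a sequence with $\y_k\in A_+^k(\y)$ has its limit in $\bigcap_N\overline{\bigcup_{k\geq N}A_+^k(\y)}$), this equivalence is the most direct. The one technical point worth isolating and proving carefully — and the genuine crux of the whole proposition — is the concatenation lemma for CM$(F)$, i.e.\ that $\z\in A_+^j(\y)\Rightarrow A_+^k(\z)\subseteq A_+^{j+k}(\y)$; everything else is routine point-set topology, so I would state and prove that lemma first and then assemble the four implications in the cyclic order $\mathrm{GA}\Rightarrow\mathrm{(i)}\Rightarrow\mathrm{(ii)}\Rightarrow\mathrm{(iii)}\Rightarrow\mathrm{(i)}\Rightarrow\mathrm{GA}$, reusing the lemma at each ``upgrade from bounded to unbounded time'' step.
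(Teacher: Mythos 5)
Your overall architecture (GA $\Rightarrow$ (i), (i) $\Leftrightarrow$ (ii) $\Leftrightarrow$ (iii), back to GA) is reasonable and the concatenation property you isolate is indeed used in the paper's proof, but your sketch has a genuine gap at exactly the point that makes the proposition non-trivial: the degenerate time-zero case. Condition (ii) demands $k\in\Zplusstrict$, while $A_+(y)$ contains $A_+^0(y)=\{y\}$. So your step ``(i) $\Rightarrow$ (ii): every neighbourhood $U$ of $x^*$ meets $A_+(y)$, hence there is $k$ and $\w\in\Oxk[k][y]$ with $\Sxk[k][y](\w)\in U$'' can produce only $k=0$: take $y=x^*$ (or any $y\in U$), where (i) is satisfied through the time-zero term alone, and you get no path of length $\geq 1$. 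The same defect undermines your iteration for (i) $\Rightarrow$ GA and for (i) $\Rightarrow$ (iii): applying (i) at the current point $z$ (and, a fortiori, the parenthetical ``simply noting $z\in\overline{A_+(z)}$ trivially and iterating'', which is vacuous) may keep returning $z$ itself at time $0$, so the time index need not increase, whereas GA requires $x^*\in\overline{\bigcup_{k\geq N}A_+^k(y)}$ for \emph{every} $N$, i.e.\ approximating points at arbitrarily large times. ``Padding with extra steps'' is not automatic either, since after an extra step you must argue you can again come close to $x^*$.

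The missing maneuver --- and the actual crux in the paper's proof --- is to take one genuine step \emph{before} invoking (i): pick $u\in\Oxk[1][y]$ (non-empty because $p_y$ is a density) and apply (i) at $\Sxk[1][y](u)$. Any point of $A_+(\Sxk[1][y](u))$ lying in $U$, including $\Sxk[1][y](u)$ itself at relative time $0$, is then reached from $y$ in $\geq 1$ steps via the concatenation identity $\Sxk[k+1][y](u,\w)=S^{k}_{\Sxk[1][y](u)}(\w)$, which proves (ii). Iterating this with shrinking balls $B(x^*,1/t)$ (each stage contributing at least one step, so the cumulative times are strictly increasing) gives the sequence required for (iii), from which GA follows directly since $y_t\in A_+^{k_1+\dots+k_t}(y)\subset\bigcup_{i\geq N}A_+^i(y)$ eventually for every $N$. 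With this fix inserted at each ``upgrade from time $0$ (or bounded time) to positive (or unbounded) time'' step, your plan goes through and essentially coincides with the paper's argument; without it, the implications (i) $\Rightarrow$ (ii) and (i) $\Rightarrow$ GA are not established as written.
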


We will show that in our context, globally attracting states for CM(F) are equivalent to the notion of reachable states for the associated Markov chain. 
We remind that a point $x \in \xdom$ is called \emph{reachable}~\cite[6.1.2]{meyntweedie} for a Markov chain if for every open set $\OOpen \in \mathscr{B}(\xdom)$ containing $x$ 
$$
\sum_{k=1}^\infty P^k(y,\OOpen) > 0, \,\, ~~ \forall y \in \xdom . \footnote{Another definition of reachability considers the sum from $k$ equal $0$ instead of $1$. Both definitions are equivalent: $\sum_{k \in \Zplusstrict} P^{k}(y, \Oo) = \int_{\xdom} P(y, dz) \sum_{k \in \Zplus} P^k(z, \Oo)$ with Tonelli's theorem. So if $\sum_{k \in \Zplus} P^k(z, \Oo) > 0$ for all $z \in \xdom$, $\sum_{k \in \Zplusstrict} P^{k}(y, \Oo) > 0$. The other implication is trivial, hence the equivalence between the two definitions.}
$$
The equivalence between globally attracting states and reachable states relies on the following proposition.
\begin{proposition} \label{pr:pointproba}
Suppose that $\markov$ follows model \eqref{eq:fmodel}, that conditions $\mathrm{A1}-\mathrm{A4}$ hold and that the function $F$ is continuous. Then for all $\Oo \in \borel(\xdom)$ open set, $x \in \xdom$ and $k \in \Zplusstrict$, the following statements are equivalent:
\begin{itemize}
\item[$\mathrm{(i)}$] there exists a $k$-steps path from $x$ to $\Oo$
\item[$\mathrm{(ii)}$] $P^k(x, \Oo) > 0$.
\end{itemize}
\end{proposition}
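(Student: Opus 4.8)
The plan is to exploit the integral representation $P^k(x,\Oo) = \int_{\R^{kp}} 1_{\Oo}(\Sxk(\w))\, p_x^k(\w)\, d\w$ together with the topological facts already established in the excerpt: the control set $\Oxk$ is open, and $(x,\w)\mapsto \Sxk(\w)$ is continuous (it is $C^1$ since $F$ is, by Lemma~\ref{lm:scp}; here continuity suffices). The direction $\mathrm{(ii)}\Rightarrow\mathrm{(i)}$ is the easy one: if $P^k(x,\Oo)>0$, then the integrand $1_{\Oo}(\Sxk(\w))\,p_x^k(\w)$ cannot be Lebesgue-a.e.\ zero, so there exists at least one $\w$ with $p_x^k(\w)>0$ (i.e.\ $\w\in\Oxk$) and $\Sxk(\w)\in\Oo$; that $\w$ is by definition a $k$-steps path from $x$ to $\Oo$.

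For $\mathrm{(i)}\Rightarrow\mathrm{(ii)}$, I would take a $k$-steps path $\w_0$ from $x$ to $\Oo$, so $\w_0\in\Oxk$ and $\Sxk(\w_0)\in\Oo$. Since $\Oo$ is open and $\w\mapsto\Sxk(\w)$ is continuous, the preimage $(\Sxk)^{-1}(\Oo)$ is open and contains $\w_0$; intersecting with the open set $\Oxk$ gives an open neighbourhood $\mathscr V$ of $\w_0$ on which simultaneously $\Sxk(\w)\in\Oo$ and $p_x^k(\w)>0$. Then
\begin{equation*}
P^k(x,\Oo) = \int_{\R^{kp}} 1_{\Oo}(\Sxk(\w))\, p_x^k(\w)\, d\w \;\geq\; \int_{\mathscr V} p_x^k(\w)\, d\w \;>\;0,
\end{equation*}
where the final strict inequality holds because $\mathscr V$ is a nonempty open set (hence of positive Lebesgue measure) and $p_x^k$ is strictly positive on it. This completes the equivalence.

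The only genuine subtlety — and the step I would be most careful about — is justifying that $\int_{\mathscr V} p_x^k > 0$, i.e.\ that a lower semi-continuous, strictly positive function on a nonempty open set has strictly positive integral; this follows since lower semi-continuity at any point $\w_1\in\mathscr V$ gives a smaller open ball on which $p_x^k \geq p_x^k(\w_1)/2 > 0$, and that ball has positive Lebesgue measure. I should also note that $p_x^k$ being a bona fide probability density (as recorded after \eqref{eq:pxk}) makes the integrals well defined, and that continuity of $F$ is exactly what is needed for continuity of $\Sxk(\cdot)$ via the inductive definition \eqref{eq:sxk}. No other obstacle is anticipated; the argument is essentially a packaging of openness of $\Oxk$, continuity of the transition map, and positivity of the density on its support.
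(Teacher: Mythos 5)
Your proposal is correct and follows essentially the same route as the paper's proof: the converse direction by noting the integrand cannot be a.e.\ zero, and the forward direction by combining continuity of $\Sxk$ (Lemma~\ref{lm:scp}) with lower semi-continuity of $\pxk$ (Lemma~\ref{lm:plsc}) to get an open neighbourhood of the path on which both $\Sxk(\cdot)\in\Oo$ and the density is bounded below by a positive constant, hence a positive integral. Your extra care about $\int_{\mathscr V}p_x^k>0$ is exactly the paper's $p_0/2$-on-a-ball argument, just phrased at a generic point of $\mathscr V$ instead of at the path itself.
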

We can now deduce the following corollary.
\begin{corollary} \label{cr:GAreachability}
Suppose that $\markov$ follows model \eqref{eq:fmodel}, that conditions $\mathrm{A1}-\mathrm{A4}$ hold and that the function $F$ is continuous. Then $x \in \xdom$ is globally attractive for CM(F) if and only if it is reachable for the associated Markov chain.
\end{corollary}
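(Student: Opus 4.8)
The plan is to combine the characterization of globally attracting states from Proposition~\ref{pr:GA}(ii) with the path-to-positivity equivalence of Proposition~\ref{pr:pointproba}, so that the corollary becomes essentially a dictionary translation between the deterministic language (``$k$-steps path from $y$ to $\Uopen$'') and the probabilistic language (``$P^k(y,\Uopen)>0$''). Recall that $\xstar$ is globally attracting for CM($F$) iff, by Proposition~\ref{pr:GA}(ii), for every $y\in\xdom$ and every open $\Uopen$ containing $\xstar$ there exist $k\in\Zplusstrict$ and a $k$-steps path from $y$ to $\Uopen$; and $\xstar$ is reachable for the Markov chain iff for every open $\OOpen$ containing $\xstar$ we have $\sum_{k=1}^\infty P^k(y,\OOpen)>0$ for all $y\in\xdom$.

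First I would fix $\xstar\in\xdom$ and an arbitrary open set $\OOpen\in\borel(\xdom)$ containing $\xstar$, and an arbitrary $y\in\xdom$. For the forward implication, assume $\xstar$ is globally attracting. By Proposition~\ref{pr:GA}(ii) applied with $\Uopen=\OOpen$, there is some $k\in\Zplusstrict$ and a $k$-steps path from $y$ to $\OOpen$. By Proposition~\ref{pr:pointproba} (whose hypotheses A1--A4 and continuity of $F$ are exactly those assumed in the corollary), this gives $P^k(y,\OOpen)>0$ for that particular $k$, hence $\sum_{k=1}^\infty P^k(y,\OOpen)>0$. Since $y$ and $\OOpen$ were arbitrary, $\xstar$ is reachable.

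For the converse, assume $\xstar$ is reachable, fix $y\in\xdom$ and an open $\Uopen$ containing $\xstar$. Reachability gives $\sum_{k=1}^\infty P^k(y,\Uopen)>0$, so there exists at least one $k\in\Zplusstrict$ with $P^k(y,\Uopen)>0$. Applying Proposition~\ref{pr:pointproba} in the direction (ii)$\Rightarrow$(i) to this $k$ yields a $k$-steps path from $y$ to $\Uopen$. Thus the condition in Proposition~\ref{pr:GA}(ii) holds for every $y$ and every open $\Uopen\ni\xstar$, so $\xstar$ is globally attracting. This closes the equivalence. I do not expect any serious obstacle here: the only subtlety is bookkeeping the quantifier order (``for all $y$, for all open $\OOpen\ni\xstar$, there exists $k$''), which is identical on both sides, and making sure the hypotheses of Propositions~\ref{pr:GA} and~\ref{pr:pointproba} are met — A1--A4 plus continuity of $F$, which hold by assumption (note A5, i.e.\ $F\in C^1$, would also suffice for continuity but is not needed). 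One could alternatively route the forward direction through Proposition~\ref{pr:GA}(i) and the sets $A_+(y)$, but the formulation via (ii) matches the summation-over-$k$ form of reachability most directly and keeps the argument to a few lines.
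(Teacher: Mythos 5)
Your proposal is correct and follows essentially the same route as the paper: both directions are obtained by translating between the path characterization of global attractivity in Proposition~\ref{pr:GA}(ii) and the positivity of $P^k(y,\cdot)$ via Proposition~\ref{pr:pointproba}, under the same hypotheses A1--A4 and continuity of $F$. No gaps.
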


\alex{We may need to justify why we use the concept of globally attracting rather than reachable, since we showed that they are here equivalent.}
\anne{But globally attracting is defined from the control model and reachable is defined from the Markov chain (using the transition kernel). }

We introduce now two new definitions to characterize some specific states of the underlying deterministic control model. First of all, we have seen that a globally attracting state can be approached via the control model arbitrarily close from any other point in the state space. We introduce the notion of \emph{attainable state} for states that can be visited in finite time from any other point. That is $\xstar \in \xdom$ is an \emph{attainable state} if
\begin{equation} \label{eq:bla}
\xstar \in A_+(y), \textrm{ for all } y \in \xdom . 
\end{equation}
This statement is actually equivalent to
\begin{equation}\label{eq:blabla}
\xstar \in \bigcup_{k\in \Zplusstrict} A_+^k(y), \textrm{ for all } y \in \xdom,
\end{equation}
which turns out to be often more practical to use.\footnote{The equivalence can be easily seen. Indeed suppose that \eqref{eq:bla} holds. Let $\y \in \xdom$ and $u \in \Oxk[1][y]$. Then $\xstar \in A_+(\Sxk[1][y](u)) \subset \bigcup_{k\in \Zplusstrict} A_+^k(y)$. The other implication is immediate.}
Comparing \eqref{eq:bla} and Proposition~\ref{pr:GA}~(i), we see that an attainable state is globally attracting. We will show in Proposition~\ref{pr:GAEA} that the existence of a globally attracting state under some conditions implies the existence of an attainable state.

Second, we introduce the notion of a \emph{steadily attracting state}, whose existence as we will later on show, is linked to the $\varphi$-irreducibility and aperiodicity of the associated Markov chain. We say that a point $\xstar$ is a \emph{steadily attracting state} if for any $\y \in \xdom$ and any open $\Uo \in\borel(\xdom)$ containing $\xstar$, there exists $T \in \Zplusstrict$ such that for all $k \geq T$ there exists a $k$-steps path from $\y$ to $\Uo$. 
In the next proposition we state some quite immediate relations between globally attracting states and steadily attracting ones. Additionally we provide a characterization of a steadily attracting state.
\begin{proposition} \label{pr:SGA}
Suppose that $\markov$ follows model \eqref{eq:fmodel} and that conditions $\mathrm{A1}-\mathrm{A4}$ hold. The following statements hold:\\
$\mathrm{(i)}$ If $\xstar \in \xdom$ is steadily attracting, then it is globally attracting.\\
$\mathrm{(ii)}$ A state $\xstar \in \xdom$ is steadily attracting if and only if for all $\y \in  \xdom$ there exists a sequence $\{ \y_k : k \in \Zplusstrict \}$ with $\y_k \in A_+^k(\y)$, which converges to $\xstar$. \\
$\mathrm{(iii)}$ Assume $F$ is  $C^0$. If there exists a steadily attracting state, then all globally attracting states are steadily attracting.
\end{proposition}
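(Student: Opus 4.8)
The plan is to treat the three items in turn, using the characterizations already established for globally attracting states (Proposition~\ref{pr:GA}) and translating everything into statements about sequences $\y_k \in A_+^k(\y)$ and about $k$-steps paths.

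For $\mathrm{(i)}$, I would argue directly from the definitions: suppose $\xstar$ is steadily attracting, fix $\y \in \xdom$ and an open neighbourhood $\Uo$ of $\xstar$. By definition there is $T$ such that for every $k \geq T$ a $k$-steps path from $\y$ to $\Uo$ exists; in particular one exists for some $k \in \Zplusstrict$. By Proposition~\ref{pr:GA}~$\mathrm{(ii)}$ this is exactly the condition for $\xstar$ to be globally attracting, so we are done. (One can equivalently invoke $\mathrm{(ii)}$ of the present proposition once it is proved, since a convergent sequence $\y_k \to \xstar$ is in particular one from which a convergent-to-$\xstar$ subsequence can be extracted, matching Proposition~\ref{pr:GA}~$\mathrm{(iii)}$.)

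For $\mathrm{(ii)}$, I would prove both implications. For the ``only if'' direction: assume $\xstar$ is steadily attracting and fix $\y$. Apply the definition to the shrinking neighbourhoods $B(\xstar, 1/m)$, $m \in \Zplusstrict$, obtaining thresholds $T_m$, which may be taken nondecreasing in $m$ (replace $T_m$ by $\max(T_1,\dots,T_m)$). For each $k \in \Zplusstrict$, let $m(k)$ be the largest $m$ with $T_m \le k$ (set $m(k)=0$, i.e.\ use the trivial bound, if $k < T_1$; note any $\y_k \in A_+^k(\y)$ exists because $\Oxk$ is nonempty), and pick $\y_k \in A_+^k(\y)$ with $\y_k \in B(\xstar, 1/m(k))$ when $m(k)\ge 1$. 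Since $T_m$ is finite for each $m$, $m(k) \to \infty$ as $k \to \infty$, hence $\y_k \to \xstar$. For the ``if'' direction: given such a sequence $\y_k \to \xstar$ for every $\y$, fix $\y$ and an open neighbourhood $\Uo$ of $\xstar$; choose $\epsilon>0$ with $B(\xstar,\epsilon)\subset\Uo$; convergence gives $T$ with $\y_k \in B(\xstar,\epsilon) \subset \Uo$ for all $k \ge T$, and since $\y_k \in A_+^k(\y)$ means precisely that there is a $k$-steps path from $\y$ to $\{\y_k\} \subset \Uo$, the steady-attraction condition holds.

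For $\mathrm{(iii)}$, assume $F$ is $C^0$, let $\xstar$ be steadily attracting and let $\zstar$ be any globally attracting state; I must show $\zstar$ is steadily attracting. The idea is to concatenate paths: from any $\y$, reach a neighbourhood of $\xstar$ at all large times (steady attraction of $\xstar$), and then from near $\xstar$ reach a neighbourhood of $\zstar$ in some fixed number of additional steps (global attractivity of $\zstar$ plus continuity of $F$ to pass from a single starting point to a neighbourhood of it). Concretely, fix $\y$ and an open $\Vo \ni \zstar$. Since $\zstar$ is globally attracting, by Proposition~\ref{pr:GA}~$\mathrm{(ii)}$ there is $\ell \in \Zplusstrict$ and a $\ell$-steps path $\v$ from $\xstar$ to $\Vo$; that is, $\v \in \Oxk[\ell][\xstar]$ and $S_{\xstar}^{\ell}(\v) \in \Vo$. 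Because $\Oxk[\ell][\cdot]$ is the positivity set of the lower semi-continuous map $(x,\w)\mapsto p_x^\ell(\w)$ and $(x,\w)\mapsto S_x^\ell(\w)$ is continuous (as $F$ is $C^0$), the set $\{x : \v \in \Oxk[\ell][x] \text{ and } S_x^\ell(\v) \in \Vo\}$ is an open neighbourhood $\Uo$ of $\xstar$; for every $x \in \Uo$ there is therefore a $\ell$-steps path from $x$ to $\Vo$, namely $\v$ itself. Now apply steady attraction of $\xstar$ to $\y$ and $\Uo$: there is $T$ such that for all $j \ge T$ there is a $j$-steps path $\w^{(j)}$ from $\y$ to $\Uo$, with endpoint $x_j := S_\y^j(\w^{(j)}) \in \Uo$. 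Concatenating $\w^{(j)}$ with $\v$ gives, for each $j \ge T$, a $(j+\ell)$-steps path from $\y$ to $\Vo$: one checks $(\w^{(j)},\v) \in \Oxk[j+\ell][\y]$ from the inductive definition \eqref{eq:pxk} of $p_\y^{j+\ell}$ (the first block is positive since $\w^{(j)}\in\Oxk[j][\y]$, and the second factor $p_{x_j}^{\ell}(\v) = p_{S_\y^j(\w^{(j)})}^\ell(\v) > 0$ since $x_j \in \Uo$), and $S_\y^{j+\ell}(\w^{(j)},\v) = S_{x_j}^\ell(\v) \in \Vo$ by \eqref{eq:sxk}. Setting $T' := T+\ell$, for every $k \ge T'$ we get a $k$-steps path from $\y$ to $\Vo$ by taking $j = k-\ell \ge T$. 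Hence $\zstar$ is steadily attracting.

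The main obstacle is the bookkeeping in $\mathrm{(iii)}$: one needs the continuity of the extended transition map and the lower semicontinuity of the extended densities (Lemmas~\ref{lm:scp} and~\ref{lm:plsc}) to promote ``there is a path from the single point $\xstar$'' to ``there is a \emph{uniform} path from a whole neighbourhood of $\xstar$,'' and then the concatenation identity for $p_\y^{j+\ell}$ via \eqref{eq:pxk} must be verified carefully so that the glued control sequence genuinely lies in the control set $\Oxk[j+\ell][\y]$. The other two items are essentially unwinding definitions and a diagonal-type choice of neighbourhoods.
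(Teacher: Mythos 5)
Your proposal is correct and follows essentially the same route as the paper's proof: (i) via Proposition~\ref{pr:GA}~(ii), (ii) via the shrinking-balls/threshold construction and its straightforward converse, and (iii) by using the lower semi-continuity of the extended density and continuity of the extended transition map (Lemmas~\ref{lm:scp} and~\ref{lm:plsc}) to upgrade a single path from $\xstar$ to a path valid from a whole neighbourhood of $\xstar$, then concatenating with the paths guaranteed by steady attraction. The only difference is that you spell out the concatenation identity for the extended densities explicitly, which the paper leaves implicit.
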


Proposition~\ref{pr:saab} also establishes that under a controllability condition, a globally attracting state $\xstar \in \xdom$ where we can come back in $a$ and $b$ steps with $\gcd(a,b) = 1$ is steadily attracting (that is $\Sxk[a][\xstar](\w_a) = \Sxk[b][\xstar](\w_b) = \xstar$ for some $\w_a \in \Oxk[a][\xstar]$ and $\w_b \in \Oxk[b][\xstar])$; and that if a steadily attracting state exists, then such a globally attracting state necessarily exists.

An attainable state may also be steadily attracting under conditions stated in the following proposition, which will be key to showing that the existence of a steadily attracting state is necessary for the aperiodicity of the Markov chain. 
\begin{proposition} \label{pr:EA2AA}
Suppose that $\markov$ follows model \eqref{eq:fmodel} and that conditions  $\mathrm{A1}-\mathrm{A4}$ hold.
Let $\xstar \in \xdom$ be an attainable state, and consider the set
\begin{equation} \label{eq:E}
E := \lbrace a \in \Zplusstrict | \exists\, t_0 \in \Zplus ,  \forall t \geq t_0,  \xstar \in A_+^{at}(\xstar) \rbrace .
\end{equation}
The following statements hold:

$\mathrm{(i)}$ $E$ is not empty and for all $(a,b) \in E^2$, $\gcd(a,b) \in E$,

$\mathrm{(ii)}$ if $\gcd(E) = 1$, then $\xstar$ is steadily attracting, \alex{What will be useful later is the contrapositive: if $\xstar$ is not steadily attracting, then the gcd of the set is larger than $1$.}

$\mathrm{(iii)}$ if $\markov$ is $\varphi$-irreducible then there exists a $d$-cycle, where $d = \gcd(E)$.
\end{proposition}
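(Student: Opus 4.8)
The plan is to prove the three parts of Proposition~\ref{pr:EA2AA} essentially in the order stated, treating the set $E$ as a numerical semigroup-type object sitting inside $\Zplusstrict$, and using attainability of $\xstar$ together with the composition rule for paths (i.e.\ $A_+^{j}(A_+^k(x)) \subseteq A_+^{j+k}(x)$, which follows directly from \eqref{eq:sxk}).

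For part $\mathrm{(i)}$, first I would observe that $E$ is nonempty: since $\xstar$ is attainable, by \eqref{eq:blabla} there is some $k \geq 1$ with $\xstar \in A_+^k(\xstar)$, and then iterating the path gives $\xstar \in A_+^{kt}(\xstar)$ for every $t \geq 1$, so $k \in E$ (with $t_0 = 1$). For the gcd-closure, the key observation is that $E$ is closed under addition: if $a, b \in E$ with thresholds $t_a, t_b$, then for $t$ large enough, $\xstar \in A_+^{at}(\xstar)$ and $\xstar \in A_+^{bt'}(\xstar)$ can be concatenated to give $\xstar \in A_+^{at + bt'}(\xstar)$; choosing the right multiples shows $a+b \in E$ (and similarly $a\ell \in E$ for any $\ell \geq 1$ whenever $a \in E$). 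Then I would invoke the standard fact that an additively closed subset of $\Zplusstrict$ eventually contains every sufficiently large multiple of its gcd; combining this with attainability of $\xstar$ (to ``fill in'' the missing finite time offsets: from any point one can reach $\xstar$ in \emph{some} finite number of steps, and in particular there is a path of length $at$ and a path of length $at+r$ for the residues $r$ needed) one shows $\gcd(a,b)\in E$. This last bridging step — going from ``$E$ is additively closed'' plus ``$\xstar$ attainable from $\xstar$'' to ``$\gcd(a,b)$ is actually realized with a uniform threshold'' — is the part that needs the most care; the cleanest route is: let $d = \gcd(a,b)$; by B\'ezout and additive closure, some large multiple $Nd$ lies in the subsemigroup generated by $a,b$, and by attainability there are return paths to $\xstar$ of lengths $Nd$ and $Nd + d, Nd+2d, \dots$ realizing all residues mod the relevant modulus, hence $\xstar \in A_+^{dt}(\xstar)$ for all $t$ past a threshold, i.e.\ $d \in E$.

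For part $\mathrm{(ii)}$, assume $\gcd(E) = 1$. I want to show $\xstar$ is steadily attracting, i.e.\ for every $\y \in \xdom$ and every open $\Uo \ni \xstar$ there is $T$ such that for all $k \geq T$ there is a $k$-steps path from $\y$ to $\Uo$. The idea: since $\gcd(E) = 1$ and $E$ is additively closed by $\mathrm{(i)}$, the set of finite sums of elements of $E$ eventually contains every sufficiently large integer (Schur / Chicken McNugget); combined with the uniform thresholds $t_0$ in the definition of $E$, there is $T_0$ such that for all $m \geq T_0$, $\xstar \in A_+^m(\xstar)$. Next, since $\xstar$ is attainable, for the given $\y$ there is $k_0 \geq 1$ with $\xstar \in A_+^{k_0}(\y)$. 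Then for any $k \geq k_0 + T_0$ we write $k = k_0 + m$ with $m \geq T_0$, concatenate a $k_0$-steps path from $\y$ to $\xstar$ with an $m$-steps path from $\xstar$ to $\xstar$, and land in $A_+^k(\y)$ at the point $\xstar \in \Uo$. Technically one must pass from ``$\xstar \in A_+^k(\y)$'' to ``there is a $k$-steps path from $\y$ to $\Uo$'', which is immediate since $A_+^k(\y) = \{ \Sxk[k][\y](\w) : \w \in \Oxk[k][\y]\}$ and $\Uo$ is open containing $\xstar$; so $\xstar \in A_+^k(\y)$ already gives a $k$-steps path to $\{\xstar\}\subseteq \Uo$.

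For part $\mathrm{(iii)}$, suppose $\markov$ is $\varphi$-irreducible and set $d = \gcd(E)$. By the general theory (\cite[Theorem~5.4.4]{meyntweedie}) there is a $d'$-cycle $(D_i)_{i=1,\dots,d'}$ where $d'$ is the period; I want to show $d' = d$ or at least exhibit a $d$-cycle. The plan is to compare $d$ with the period via the return times to $\xstar$: since $\xstar$ is attainable, by Corollary~\ref{cr:GAreachability} it is reachable, and by Proposition~\ref{pr:pointproba}, $\xstar \in A_+^k(\xstar)$ is equivalent to $P^k(\xstar, \Oo) > 0$ for small neighbourhoods $\Oo$ of $\xstar$; so $E$ (up to the threshold subtlety) records the ``eventual'' return times with positive probability to a neighbourhood of $\xstar$. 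Standard cyclic-class arguments then force the period to divide every element of $E$ — hence to divide $d = \gcd(E)$ — while conversely one constructs a $d$-cycle by grouping states according to the residue mod $d$ of the first time a neighbourhood of $\xstar$ is hit with positive probability, using the controllability/continuity structure to check properties $\mathrm{(i)}$ and $\mathrm{(ii)}$ of a $d$-cycle and that it is a genuine partition up to a $\varphi$-null set. The main obstacle throughout is handling the ``$\exists t_0, \forall t \geq t_0$'' quantifier in the definition of $E$ — i.e.\ that membership in $E$ is an eventual rather than an exact statement — so that the numerical-semigroup arguments and the cyclic-decomposition argument go through with genuinely uniform time bounds; I would isolate this once as a lemma (``$E$ additively closed, $\gcd(E) = d$ $\Rightarrow$ $\exists T_0,\ \forall m \geq T_0,\ dm \in E$, hence $\xstar \in A_+^{dm}(\xstar)$'') and reuse it in $\mathrm{(ii)}$ and $\mathrm{(iii)}$.
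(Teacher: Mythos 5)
Parts $\mathrm{(i)}$ and $\mathrm{(ii)}$ of your plan are sound and essentially the paper's route (one return loop iterated gives nonemptiness; all large multiples of $d=\gcd(a,b)$ written as $ac_1+bc_2$ give gcd-closure; $\mathrm{(ii)}$ follows from $1\in E$ plus attainability). One caution on the bridging step you flag: do not conflate ``$m\in E$'' with ``$\xstar\in A_+^{m}(\xstar)$''. Membership in $E$ only says that all sufficiently large multiples of $m$ are exact return times, so in $\mathrm{(i)}$ the return paths of lengths $Nd, Nd+d,\dots$ do not come ``from attainability''; they come from $a,b\in E$ after shifting every representation by $at_a+bt_b$ so that the thresholds are respected (this is precisely the step the paper writes out with B\'ezout and Euclidean division), and in $\mathrm{(ii)}$ the statement ``$\xstar\in A_+^{m}(\xstar)$ for all $m\ge T_0$'' is exactly ``$1\in E$'', which you obtain by applying $\mathrm{(i)}$ to finitely many elements of $E$ realizing the gcd, not from the Schur-type fact alone. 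These are fixable and you acknowledge them.

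Part $\mathrm{(iii)}$, however, has a genuine gap. First, the proposition assumes only $\mathrm{A1}$--$\mathrm{A4}$, so you may not lean on ``controllability/continuity structure'', and Proposition~\ref{pr:pointproba} and Corollary~\ref{cr:GAreachability} require $F$ continuous; moreover ``$\xstar\in A_+^{k}(\xstar)$ is equivalent to $P^k(\xstar,\Oo)>0$ for small $\Oo$'' is false (only the forward implication holds: positive probability of hitting a neighbourhood never yields an exact return). Second, your divisibility direction is backwards: proving that the period divides every element of $E$, hence divides $d=\gcd(E)$, does not produce a $d$-cycle, since cycles exist only for divisors of the period; what must be exhibited is a $d$-cycle outright. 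Third, the proposed construction by the residue mod $d$ of the first positive-probability hitting time of a neighbourhood of $\xstar$ is not justified: the defining property $P(x,D_{i+1})=1$ would require that from a state with first hitting time $m$ one lands almost surely in states with first hitting time congruent to $m-1$ modulo $d$, and under $\mathrm{A1}$--$\mathrm{A4}$ alone nothing ties the return-time residues of the states reached in one step to those of the starting state (nor does anything guarantee the chosen neighbourhood sits inside a single class). The paper instead builds the cycle deterministically, setting $D_i=\bigcup_{r\ge 0}A_+^{rd+i}(\xstar)$ for $i=0,\dots,d-1$: the one-step property $P(y,A_+^{t+1}(\xstar))=1$ for $y\in A_+^{t}(\xstar)$ follows from the factorization of the extended density, disjointness follows from attainability (a point in $A_+^{i}(\xstar)\cap A_+^{j}(\xstar)$ forces $t_0+i\in E$ and $t_0+j\in E$, hence $d$ divides $i-j$), and the complement of $\bigcup_i D_i$ is null for every irreducibility measure because the chain started at $\xstar$ remains in $A_+(\xstar)$ almost surely. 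Your sketch of $\mathrm{(iii)}$ would need to be replaced by an argument of this kind.
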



\begin{remark}[Control model and choice of density]

For a Markov chain following model~\eqref{eq:fmodel} under conditions $\mathrm{A1}-\mathrm{A5}$, the random variable $\alpha(x, U_1)$ admits different densities which differ on sets of null measure. Therefore there is not a unique deterministic control model associated to a Markov chain following~\eqref{eq:fmodel}. The control sets $\{ \Oxk : k \in \Zplusstrict, x\in \xdom\}$ and the sets $\{A_+^k(x): k \in \Zplusstrict , x\in \xdom\}$ may differ depending on the choice of the density for representing the Markov chain. 

However, under conditions $\mathrm{A1-A4}$ and that $F$ is $C^0$, the choice of a different lower semi-continuous density does not affect whether a point $\xstar \in \xdom$ is globally or steadily attracting\footnote{Indeed, if $\xstar$ is globally attracting then for all $y \in \xdom$ and $\Uo_{\xstar}$ open neighborhood of $\xstar$ there exists $\w$ a $k$-steps path from $y$ to $\Uo_{\xstar}$ for some $k \in \Zplusstrict$; since $\pxk[k][y]$ is lower semi-continuous, for any other density $\tilde{p}_y^k$ equal almost everywhere to $\pxk[k][y]$, for any $\eta > 0$ there exists a $\u \in B(\w, \eta)$ such that $\tilde{p}_y^k(\u) > 0$. By continuity of $\Sxk[k][y]$, we may then have a $\u$ such that $\Sxk[k][y](\u) \in \Uo_{\xstar}$ and $\tilde{p}_y^k(\u) > 0$, which implies that $\xstar$ is a globally attracting state for $(\tilde{p}_y)_{y \in \xdom}$. The same reasoning applies for steadily attracting states.}.  Note that while a globally (resp. steadily) attracting state for a lower semi-continuous density is therefore globally (resp. steadily) attracting for any other density representing the same random variable (even non-lower semi-continuous densities), the converse does not hold in general.

Attainable states may depend on the choice of density. However, under conditions $\mathrm{A1-A5}$ and if there exists $\xstar \in \xdom$ a globally attracting state,  $k \in \Zplusstrict$ and $\wstar \in \Oxk[k][\xstar]$ such that the controllability matrix $\Cxk[k][\xstar](\wstar)$ has rank $n$, then as a consequence of Proposition~\ref{pr:GAEA} the existence of an attainable state is independent of the choice of density.
\alex{I refer to densities, but it would be more accurate to talk about families of densities}
\end{remark}

\begin{remark}
In Proposition~\ref{pr:saab} in the appendix, we show that under a controllability condition, the existence of a steadily attracting state is equivalent to the existence of a globally attracting state $\xstar$ for which there exists paths of length $a$ and $b$ with $\gcd(a,b) = 1$ leading from $\xstar$ to $\xstar$. This result is particularly useful to obtain practical conditions to prove that a globally attracting state is steadily attracting as stated in Lemma~\ref{lem:PCSA}.
\end{remark}
\alex{I put it in a remark environment and made the text less technical.}

\subsection{Controllability Matrix and Controllability Condition}

A central condition in many of our results is that the rank of the so-called controllability matrix is $n$. This condition \new{is a straightforward generalization of} \del{generalizes} the controllability condition for linear state-space models \cite{meyntweedie}. In this section we give some background on controllability matrices and derive some first results related to the rank condition above. Our notations are borrowed from \cite{meyn-caines1991,meyntweedie}.

\subsubsection{Controllability Matrix: Definition and First Properties}

For an initial condition $y \in \xdom$ and a sequence $\lbrace w_k \in \R^{p} : k \in \N \rbrace$, let $\lbrace A_k, B_k : k \in \N \rbrace$ denote the matrices
\begin{align}\label{eq:Ak}
A_k &= A_k(y, w_1, \ldots, w_{k+1}) := \left[ \frac{\partial F}{\partial x} \right]_{(\Sxk[k][y], w_{k+1})} \\\label{eq:Bk}
B_k &= B_k(y, w_1, \ldots, w_{k+1}) := \left[ \frac{\partial F}{\partial w} \right]_{(\Sxk[k][y], w_{k+1})}
\end{align}
and let $\Cxk[k][y] = \Cxk[k][y](w_1, \ldots, w_k) \in \R^{n \times pk}$ denote the \emph{generalized controllability matrix} (along the sequence $(w_1, \ldots, w_k)$)
\begin{equation}
\Cxk[k][y](w_1, \ldots, w_k) := \left[ A_{k-1} \ldots A_1 B_0 | \ldots | A_{k-1} B_{k-2} | B_{k-1} \right].
\end{equation}
Remark that from \eqref{eq:Ak} and \eqref{eq:Bk}, it follows immediatly that for $k \in \Zplusstrict$
\begin{align*}
B_{k}(y,w_1,\ldots,w_{k+1}) &= B_{k-1}(S_y^1(w_1),w_2,\ldots,w_{k+1})\\
A_{k}(y,w_1,\ldots,w_{k+1}) &= A_{k-1}(S_y^1(w_1),w_2,\ldots,w_{k+1}) ,
\end{align*}
and therefore, the controllability matrix satisfies for $k \in \Zplusstrict$
\begin{equation} \label{eq:controlrec}
\Cxk[k][y](w_1, \ldots, w_{k}) = \left[ \left. A_{k-1} \ldots A_1 B_0 \right| \Cxk[k-1][{\Sxk[1][y](w_1)}](w_2, \ldots, w_{k}) \right] .\\ 
\end{equation}
Inductively, it follows that for $i = 1, \ldots , k$, $\Cxk[k-i][{\Sxk[i][y](w_1, \ldots, w_i)}](w_{i+1}, \ldots, w_k)$ is a sub-matrix of $\Cxk[k][y](w_1, \ldots, w_k)$.
Additionally, the generalized controllability matrix $\Cxk[k][y]$ is the Jacobian matrix of the function $(w_1, \ldots, w_k) \mapsto \Sxk[k][y](w_1, \ldots, w_k)$, that is for $\w_0 \in \R^{kp}$
\begin{equation}\label{eq:Cpartial}
\Cxk[k][y](\w_0) = \left[ \frac{\partial S_y^k}{\partial w_1}  \left|\right. \ldots \left|\right.  \frac{\partial S_y^k}{\partial w_{k}} \right]_{\w_0}.
\end{equation}
This formula is a consequence of the chain rule. We provide in appendix its derivation. 
A central condition for our results will be that the rank of the controllability matrix $\Cxk(\w)$ is $n$  for some $x \in \xdom$, $k \in \Zplusstrict$ and $\w \in \Oxk$. This condition is equivalent to $\Sxk$ being a submersion at $\w$. This formulation was used in a previous version of this work~\cite{chotard2015arxiv}.

\todo{we make this abuse of notations that we take the derivative wrt a variable and apply it to its variable - check what is standard in statistics litterature. Remark: MT in 7.4 does not make this abuse of notation (maybe we should not do it when it is "important" or in definition.}

\subsubsection{Accessibility and Controllability Condition}
We show in the next proposition that under conditions $\mathrm{A1}-\mathrm{A5}$, if the rank condition on the controllability matrix is satisfied at a globally attracting state, it is satisfied at any $x$ in $\xdom$.
\begin{proposition} \label{pr:gacontrol}
Suppose that $\markov$ follows model \eqref{eq:fmodel} and that conditions $\mathrm{A1}-\mathrm{A5}$ hold.
Let $\xstar \in \xdom$ be a globally attracting state. If there exists $k \in \Zplusstrict$ and $\wstar \in \Oxk[k][\xstar]$ such that $\rank \Cxk[k][\xstar](\wstar) = n$, then for all $x \in \xdom$, there exists $T \in \Zplusstrict$ and $\uu \in \Oxk[T][x]$ for which $\rank(\Cxk[T][x](\uu)) = n$.
\end{proposition}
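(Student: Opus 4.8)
The plan is to combine the chain rule identity \eqref{eq:Cpartial}---which says the controllability matrix $\Cxk[k][y]$ is exactly the Jacobian of $\w \mapsto \Sxk[k][y](\w)$---with the fact that full rank of the Jacobian is an open condition, and then propagate full rank from $\xstar$ backward to an arbitrary starting point $x$ using the global attractivity of $\xstar$ and the submatrix recursion \eqref{eq:controlrec}.

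First I would reformulate the rank hypothesis: $\rank \Cxk[k][\xstar](\wstar) = n$ means $\Sxk[k][\xstar]$ is a submersion at $\wstar$. Since $F$ is $C^1$, the map $(y,\w)\mapsto \Sxk[k][y](\w)$ is $C^1$ (Lemma~\ref{lm:scp}), so the map $(y,\w) \mapsto \Cxk[k][y](\w)$ is continuous, and the set of $(y,\w)$ at which $\Cxk[k][y](\w)$ has rank $n$ is open (having some $n\times n$ minor with nonzero determinant is an open condition). In particular there is an open neighbourhood $\mathscr{V}\times\mathscr{W}$ of $(\xstar,\wstar)$ such that $\rank \Cxk[k][y](\w) = n$ for all $(y,\w)\in\mathscr{V}\times\mathscr{W}$; shrinking $\mathscr{W}$ we may also assume $\mathscr{W}\subset\Oxk[k][y]$ for $y$ in a possibly smaller neighbourhood, using that the control sets $\Oxk[k][y]$ are open and that lower semi-continuity of $(y,\w)\mapsto \pxk[k][y](\w)$ makes $\{(y,\w): \pxk[k][y](\w)>0\}$ open; in particular $\wstar\in\Oxk[k][\xstar]$ extends to nearby $y$.

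Next, fix an arbitrary $x\in\xdom$. Since $\xstar$ is globally attracting, by Proposition~\ref{pr:GA}~(ii) applied with the open neighbourhood $\mathscr{V}$ of $\xstar$, there exist $j\in\Zplusstrict$ and a $j$-steps path $\v\in\Oxk[j][x]$ from $x$ to $\mathscr{V}$, i.e. $y_0 := \Sxk[j][x](\v)\in\mathscr{V}$. Now I would glue this path to a full-rank $k$-steps continuation: concatenate $\v$ with a control sequence $\w$ close to $\wstar$. Because $y_0\in\mathscr{V}$, after possibly shrinking things as above we can pick $\w\in\mathscr{W}\cap\Oxk[k][y_0]$ with $\rank\Cxk[k][y_0](\w)=n$. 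Set $T := j+k$ and $\uu := (\v,\w)$. That $\uu\in\Oxk[T][x]$ follows from the chain-rule definition \eqref{eq:pxk} of the extended density: $\pxk[T][x](\v,\w) = \pxk[j][x](\v)\,\pxk[k][{\Sxk[j][x](\v)}](\w) = \pxk[j][x](\v)\,\pxk[k][y_0](\w) > 0$. Finally, by the submatrix property derived from \eqref{eq:controlrec} (iterated), $\Cxk[k][y_0](\w) = \Cxk[T-j][{\Sxk[j][x](\v)}](w_{j+1},\dots,w_T)$ is a submatrix of $\Cxk[T][x](\uu)$, so $\rank \Cxk[T][x](\uu) \geq \rank \Cxk[k][y_0](\w) = n$, hence equals $n$. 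This gives the claimed $T$ and $\uu$.

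The main obstacle---the only place requiring care---is the simultaneous openness bookkeeping: ensuring that one can choose the neighbourhood $\mathscr{V}$ of $\xstar$ small enough that for \emph{every} $y_0\in\mathscr{V}$ there exists a single control $\w$ which lies in $\Oxk[k][y_0]$ \emph{and} makes $\Cxk[k][y_0](\w)$ full rank. This follows from the joint continuity of $(y,\w)\mapsto\Cxk[k][y](\w)$ together with the openness of $\{(y,\w):\pxk[k][y](\w)>0\}$ (lower semi-continuity of the extended density), which together give an open product neighbourhood $\mathscr{V}\times\mathscr{W}$ of $(\xstar,\wstar)$ on which both conditions hold; then any $\w\in\mathscr{W}$ works uniformly for all $y_0\in\mathscr{V}$. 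Everything else is a routine application of Proposition~\ref{pr:GA}~(ii), the recursion \eqref{eq:controlrec} for the submatrix structure, and the multiplicative formula \eqref{eq:pxk} for the extended density.
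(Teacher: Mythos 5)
Your proof is correct and takes essentially the same route as the paper's: openness of the full-rank condition in a neighbourhood of $(\xstar,\wstar)$ via the $C^1$ regularity of $(y,\w)\mapsto \Sxk[k][y](\w)$, a path from an arbitrary $x$ into that neighbourhood supplied by global attractivity through Proposition~\ref{pr:GA}, and the submatrix structure \eqref{eq:controlrec} applied to the concatenated control to conclude full rank. If anything, your explicit check that the concatenated control lies in $\Oxk[T][x]$ (via lower semi-continuity of the extended density) spells out a detail the paper's proof leaves implicit.
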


In the next proposition, we show that from a globally attracting state where the rank condition on the controllability matrix is satisfied, we can construct an attainable state. This proposition will later on allow us to use Proposition~\ref{pr:EA2AA} to prove that if for all $x \in \xdom$ the rank condition is satisfied, then the existence of a steadily attracting state is a necessary condition for the aperiodicity of the Markov chain.
\begin{proposition} \label{pr:GAEA}
Suppose that $\markov$ follows model \eqref{eq:fmodel} and that conditions  $\mathrm{A1}-\mathrm{A5}$ hold. Let $\xstar \in \xdom$ and suppose there exists $k \in \Zplusstrict$ and $\wstar \in \Oxk[k][\xstar]$ for which $\rank \Cxk[k][\xstar](\wstar) = n$. 
\begin{itemize}
	\item[(i)] There exists $\Uo$ a neighborhood of $\xstar$ such that for all $x \in \Uo$, there exists $\w \in \Oxk$ for which $\Sxk(\w) = \Sxk[k][\xstar](\wstar)$.
	\item[(ii)] If $\xstar$ is globally attracting, then $\Sxk[k][\xstar](\wstar)$ is attainable.
\end{itemize}

\del{
Let $\xstar \in \xdom$ be a globally attracting state and suppose there exists $k \in \Zplusstrict$ and $\wstar \in \Oxk[k][\xstar]$ for which $\rank(\Cxk[k][\xstar](\wstar)) = n$. Then $\Sxk[k][\xstar](\wstar)$ is an attainable state.}
\alex{We could add that there exists a neighborhood of $\xstar$ that allows all points inside to go in $k$-steps to $\Sxk[k][\xstar](\wstar)$; which is shown in the proof and that we use in other propositions. ++DONE++}


\end{proposition}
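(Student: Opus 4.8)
The plan is to establish (i) first by an inverse/implicit-function-type argument, and then deduce (ii) by combining (i) with the characterization of globally attracting states from Proposition~\ref{pr:GA}. For part (i), the hypothesis $\rank \Cxk[k][\xstar](\wstar) = n$ says, in view of \eqref{eq:Cpartial}, that the Jacobian of the map $\w \mapsto \Sxk[k][\xstar](\w)$ at $\w = \wstar$ has full rank $n$; that is, $\Sxk[k][\xstar]$ is a submersion at $\wstar$. Since $\Sxk[k][\cdot](\cdot)$ is jointly $C^1$ in $(x,\w)$ (this is the smoothness transfer for the extended transition map, Lemma~\ref{lm:scp}, using A5), I would apply the submersion theorem / constant-rank-type argument with parameter $x$: there is an open neighborhood $\Uo \times \Vo$ of $(\xstar, \wstar)$ in $\xdom \times \R^{kp}$ and, for each $x \in \Uo$, the value $\Sxk[k][\xstar](\wstar)$ lies in the image $\Sxk[k][x](\Vo)$. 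Concretely, pick a $p k$-vector decomposition so that an $n\times n$ minor of $\Cxk[k][\xstar](\wstar)$ is invertible, freeze the complementary coordinates of $\w$ at their values in $\wstar$, and apply the implicit function theorem to the $C^1$ map $(x,\text{(free coords)}) \mapsto \Sxk[k][x](\cdot)$ to solve $\Sxk[k][x](\w) = \Sxk[k][\xstar](\wstar)$ for the free coordinates as a $C^1$ function of $x$ near $\xstar$. Shrinking $\Uo$ if necessary we also keep $\w \in \Oxk$: indeed $\wstar \in \Oxk[k][\xstar]$ means $\pxk[k][\xstar](\wstar) > 0$, and $(x,\w) \mapsto \pxk[k][x](\w)$ is lower semi-continuous (Lemma~\ref{lm:plsc}), so positivity persists on a neighborhood; since the solved-for $\w$ depends continuously on $x$ and equals $\wstar$ at $x = \xstar$, for $x$ in a possibly smaller neighborhood we still have $\pxk[k][x](\w) > 0$, i.e.\ $\w \in \Oxk$. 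This proves (i).

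For part (ii), let $y \in \xdom$ be arbitrary and set $z^* := \Sxk[k][\xstar](\wstar)$; I must show $z^* \in A_+(y)$. Since $\xstar$ is globally attracting, Proposition~\ref{pr:GA}~(ii) applied with the open neighborhood $\Uo$ of $\xstar$ from part~(i) yields some $j \in \Zplusstrict$ and a $j$-steps path $\v$ from $y$ to $\Uo$; that is, $\v \in \Oxk[j][y]$ and $x' := \Sxk[j][y](\v) \in \Uo$. By part~(i), since $x' \in \Uo$, there exists $\w \in \Oxk[k][x']$ with $\Sxk[k][x'](\w) = z^*$. Concatenating the control sequences, $(\v,\w)$ is a $(j+k)$-steps path from $y$; the concatenation indeed lies in $\Oxk[j+k][y]$ because $\pxk[j+k][y](\v,\w) = \pxk[j][y](\v)\, p^{k}_{x'}(\ldots) \cdots > 0$ by the inductive definition \eqref{eq:pxk} of the extended density together with $\v \in \Oxk[j][y]$ and $\w \in \Oxk[k][x']$ where $x' = S_y^j(\v)$. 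Hence $\Sxk[j+k][y](\v,\w) = \Sxk[k][x'](\w) = z^*$, so $z^* \in A_+^{j+k}(y) \subset A_+(y)$. As $y$ was arbitrary, $z^* = \Sxk[k][\xstar](\wstar)$ is attainable.

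The main obstacle I anticipate is part~(i): making the parametrized inverse-function argument fully rigorous while simultaneously tracking that the produced control stays in the (merely open, possibly complicated) control set $\Oxk$. The joint $C^1$ dependence on $(x,\w)$ is essential here and relies on Lemma~\ref{lm:scp}; one must be careful that the implicit function theorem is applied to a genuinely $C^1$ map of $(x, \text{free coords})$, not just separately continuous, and then that shrinking the neighborhood preserves both the submersion (openness of the full-rank condition) and the positivity of the extended density (lower semi-continuity, Lemma~\ref{lm:plsc}). Part~(ii) is then a routine gluing argument using Proposition~\ref{pr:GA} and the multiplicative structure of $\pxk[k][x]$.
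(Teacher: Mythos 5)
Your proposal is correct and follows essentially the same route as the paper: part (i) via the implicit function theorem after selecting an invertible $n\times n$ minor of $\Cxk[k][\xstar](\wstar)$ given by \eqref{eq:Cpartial}, combined with the lower semi-continuity of $(x,\w)\mapsto \pxk(\w)$ (Lemma~\ref{lm:plsc}) to keep the solved control in $\Oxk$, and part (ii) by concatenating a path from an arbitrary $y$ into the neighborhood $\Uo$ (Proposition~\ref{pr:GA}~(ii)) with the $k$-steps path from (i). The only cosmetic difference is that the paper restricts to the open set where $\pxk>0$ before invoking the implicit function theorem, whereas you shrink the neighborhood afterwards; both are equivalent.
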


When the Markov chain reduces to \eqref{eq:gmodel}, that is $\alpha(x,u)=u$, and $F$ is $C^\infty$, the rank condition holding for every $x$ is equivalent to forward accessibility~\cite[Proposition~2.3]{controllabilityLieJakubczyk}. This results relies on the inverse function theorem and Sard's theorem and can be easily generalized \new{to}\del{as is shown in} the following proposition.

\begin{proposition} \label{pr:forwardaccess}
Suppose that $\markov$ follows model \eqref{eq:fmodel} and that conditions  $\mathrm{A1}-\mathrm{A5}$ hold. If for all $x \in \xdom$ there exists $k \in \Zplusstrict$ and $\w \in \Oxk$ such that $\rank(\Cxk(\w)) = n$, then $\CM(F)$ is forward accessible.

Furthermore, if $F$ is $C^\infty$, $\CM(F)$ is forward accessible if and only if for all $x \in \xdom$ there exists $k \in \Zplusstrict$ and $\w \in \Oxk$ for which $\rank(\Cxk(\w)) = n$.
\end{proposition}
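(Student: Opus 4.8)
The plan is to treat the two implications separately. The direction ``rank condition $\Rightarrow$ forward accessibility'' follows from the local submersion (open mapping) theorem and needs only $F \in C^1$; the converse ``forward accessibility $\Rightarrow$ rank condition'' relies on Sard's theorem, which is exactly why the $C^\infty$ hypothesis is imposed there.

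For the first part, fix $x \in \xdom$ and let $k \in \Zplusstrict$ and $\w \in \Oxk$ satisfy $\rank(\Cxk(\w)) = n$. By \eqref{eq:Cpartial} the matrix $\Cxk(\w)$ is the Jacobian of $\Sxk$ at $\w$, and since $F$ is $C^1$, Lemma~\ref{lm:scp} gives $\Sxk \in C^1$; the rank being $n$ (the dimension of the codomain) means $\Sxk$ is a submersion at $\w$. By the local submersion theorem there is an open neighborhood $V$ of $\w$ with $\Sxk(V)$ open, and intersecting $V$ with the open set $\Oxk$ we may take $V \subseteq \Oxk$, whence $\Sxk(V) \subseteq A_+^k(x) \subseteq A_+(x)$. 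Thus $A_+(x)$ has non-empty interior; as $x$ is arbitrary, $\CM(F)$ is forward accessible.

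For the second part, assume $F \in C^\infty$; one implication is the first part, so it remains to show that forward accessibility forces the rank condition. Fix $x \in \xdom$ and suppose for contradiction that $\rank(\Cxk(\w)) < n$ for every $k \in \Zplusstrict$ and every $\w \in \Oxk$, i.e.\ by \eqref{eq:Cpartial} every point of $\Oxk$ is a critical point of $\Sxk$. Since $F \in C^\infty$, Lemma~\ref{lm:scp} gives $\Sxk \in C^\infty$, so Sard's theorem applies to $\Sxk : \R^{pk} \to \R^n$ and yields that the set of its critical values has zero $n$-dimensional Lebesgue measure; under our assumption this set contains $\Sxk(\Oxk) = A_+^k(x)$, hence $\mleb(A_+^k(x)) = 0$. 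Since also $A_+^0(x) = \{x\}$, we get $\mleb(A_+(x)) = \mleb\bigl(\bigcup_{k \in \N} A_+^k(x)\bigr) = 0$, so $A_+(x)$ has empty interior, contradicting forward accessibility. Hence some $k$ and $\w \in \Oxk$ give $\rank(\Cxk(\w)) = n$; as $x$ is arbitrary, the rank condition holds on all of $\xdom$.

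The delicate point is the use of Sard's theorem: the domain $\R^{pk}$ of $\Sxk$ has dimension growing without bound in $k$, so the smoothness order required to invoke Sard (roughly $r \ge \max\{1, pk - n + 1\}$) is unbounded --- this is exactly why $C^\infty$ regularity of $F$ is unavoidable in the equivalence, while the easy direction survives under only $C^1$. A secondary issue is that here the control sets $\Oxk$ depend on the initial condition $x$ (unlike the fixed control set in \cite{controllabilityLieJakubczyk}), but the whole argument is carried out at a fixed $x$ and uses only that $\Oxk$ is open and non-empty, so this causes no difficulty.
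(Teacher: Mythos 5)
Your proof is correct and follows essentially the same route as the paper: the rank-implies-accessibility direction is the paper's inverse-function-theorem argument (you cite the local submersion/open-mapping theorem, which is the same device applied to the last $n$ coordinates), and the converse under $C^\infty$ is the paper's Sard's-theorem argument, merely phrased by contradiction instead of by exhibiting a $k$ with $\mleb(A_+^k(x))>0$. Your closing remarks on the smoothness needed for Sard and on the $x$-dependence of $\Oxk$ match the paper's own observations.
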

Remark that the fact that the rank condition on the controllability matrix implies forward accessibility still holds for $F$ a $C^1$ function was already noted in \cite{controllabilityLieJakubczyk}.

\section{Main Results} \label{sec:main}

\subsection{T-chain and Irreducibility}

In this section, we state our main results on the $\varphi$-irreducibility and T-chain property. 
On the one hand, we generalize the result holding for a Markov chain following \eqref{eq:gmodel} with $F$ being $C^\infty$ that if CM(F) is forward accessible, the associated Markov chain is $\varphi$-irreducible if and only if CM(F) admits a globally attracting state \cite[Proposition~7.2.6]{meyntweedie}. We prove more precisely that under the conditions $\mathrm{A1}-\mathrm{A5}$,  if for all $x$, there exists $k$ and $\wstar \in \Oxk[k][\xstar]$ such that $\rank (\Cxk[k][\xstar](\wstar)) = n$ (we have seen that this condition implies forward accessibility), then the $\varphi$-irreducibility of a chain following \eqref{eq:fmodel} is equivalent to the existence of a globally attracting state.

We then derive a practical condition by showing that the existence of a globally attracting state where the rank condition is satisfied implies that the associated Markov chain is a $\varphi$-irreducible T-chain and thus that every compact set is petite. That is, we only need to find a globally attracting state and verify the rank condition at this state to prove the $\varphi$-irreducibility and T-chain property.

%
%

Those results rely on the generalization to our context of \cite[Theorem~2.1 (iii)]{meyn-caines1991}. In particular we show that around a point $x \in \xdom$ where the rank of the controllability matrix $\Cxk(\w)$ is $n$ for some $k \in \Zplusstrict$ and $\w \in \Oxk$, there exists an open small set containing $x$. More precisely we have the following result.

\begin{proposition} \label{pr:2.1}
Suppose that $\markov$ follows model \eqref{eq:fmodel} and that conditions $\mathrm{A1}-\mathrm{A5}$ are satisfied. 

$\mathrm{(i)}$ Let $x \in \xdom$, if $\rank \Cxk(\w) = n$ for some $k \in \Zplusstrict$ and $\w \in \Oxk$, then there exists $c>0$, and open sets $\Uo_x$ and $\Vo_x^\w$ containing $x$ and $\Sxk(\w)$, respectively,  such that 
\begin{equation} \label{eq:2.1}
P^k(y, A) \geq c \mleb(A \cap \Vo_x^\w),  \textrm{ for all } y \in \Uo_x ,  A \in \borel(\xdom).
\end{equation}
That is, $\Uo_x$ is a $\nu_k$-small set where $\nu_k : A \mapsto c \mleb(A \cap \Vo_x^\w)$. 
\anne{This is (iii) of Theorem 2.1 of Meyn Caines.}

$\mathrm{(ii)}$ If furthermore $F$ is $C^\infty$, and if for some $x \in \xdom$, there exists $k \in \Zplusstrict$, $c >0$ and $\Vo$ an open set such that 
\begin{equation} \label{eq:wsc}
P^k(x, A) \geq c \mleb(A \cap \Vo), \textrm{ for all } A \in \borel(\xdom),
\end{equation}
then $\rank \Cxk(\w)=n$ for some $\w \in \Oxk$, a $k$-steps path from $x$ to $\Vo$.
\anne{Connexion to Theorem 2.1 to be discussed.}
\end{proposition}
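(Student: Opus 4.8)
\medskip

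\noindent\textbf{Part (i).} The plan is to follow the structure of the proof of \cite[Theorem~2.1~(iii)]{meyn-caines1991}, carefully adapting it to the presence of the control sets $\Oxk$ (rather than a fixed product control set) and to the weakening of the smoothness assumption to $F\in C^1$. Fix $x\in\xdom$ and suppose $\rank\Cxk(\w)=n$ for some $k$ and $\w\in\Oxk$. By \eqref{eq:Cpartial}, $\Cxk(\w)$ is the Jacobian of $(w_1,\dots,w_k)\mapsto\Sxk(w_1,\dots,w_k)$ at $\w$, so the rank condition says exactly that $\Sxk$ is a submersion at $\w$. Since $\Sxk$ is $C^1$ (Lemma~\ref{lm:scp}, using $F\in C^1$), by the rank theorem / implicit function theorem there is a $C^1$ local right inverse: we can select an $n$-dimensional subset of the $pk$ coordinates of $\w$ along which the restricted map is a local $C^1$ diffeomorphism. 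Concretely, write $\w=(\w^{(1)},\w^{(2)})$ after reordering coordinates, with $\w^{(1)}\in\R^n$, so that $\w^{(1)}\mapsto\Sxk(\w^{(1)},\w^{(2)})$ has invertible Jacobian at $\w$; then there are open neighborhoods on which this map is a diffeomorphism onto an open set, and its inverse has bounded Jacobian, so it pushes Lebesgue measure forward to something dominating a constant multiple of Lebesgue measure on a small open ball $\Vo_x^\w$ around $\Sxk(\w)$.

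\medskip

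\noindent The second ingredient is a uniform-in-$y$ version of this: we must produce a neighborhood $\Uo_x$ of $x$ and a \emph{single} $\Vo_x^\w$ and constant $c$ that work for all initial points $y\in\Uo_x$. Here I would use the joint continuity: $(y,\w)\mapsto\Sxk[k][y](\w)$ is $C^1$ jointly (Lemma~\ref{lm:scp}), and $(y,\w)\mapsto\pxk(\w)$ is lower semi-continuous (Lemma~\ref{lm:plsc}), with $\pxk(\w)>0$ since $\w\in\Oxk$. So there is a product neighborhood $\Uo_x\times\mathcal W$ of $(x,\w)$ on which $\pxk[k][y]$ stays bounded below by some $\rho>0$, the partial Jacobian in the $\w^{(1)}$-directions stays uniformly invertible (determinant bounded away from $0$, entries bounded), and hence, shrinking if necessary, for every $y\in\Uo_x$ the map $\w^{(1)}\mapsto\Sxk[k][y](\w^{(1)},\w^{(2)})$ (for a fixed reference $\w^{(2)}$) is a $C^1$ diffeomorphism from a fixed ball onto an open set containing a fixed ball $\Vo_x^\w$ around $\Sxk[k][x](\w)$, with Jacobian inverse uniformly bounded by some $M$. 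Then, writing the transition kernel as $P^k(y,A)=\int \1_A(\Sxk[k][y](\w))\pxk[k][y](\w)\,d\w$, restricting the integral to $\Uo_x$-uniform good $\w$'s and changing variables $z=\Sxk[k][y](\w^{(1)},\w^{(2)})$ gives $P^k(y,A)\ge \rho M^{-1}\mleb(A\cap\Vo_x^\w)$ for all $y\in\Uo_x$, which is \eqref{eq:2.1} with $c=\rho M^{-1}$. The one delicate point — the \textbf{main obstacle} — is precisely this uniformization step: getting the \emph{same} image ball $\Vo_x^\w$ for all $y\in\Uo_x$ and a \emph{uniform} lower bound on the density and on the Jacobian, all simultaneously; this requires choosing the neighborhoods in the right order (first fix a compact coordinate box around $\w$ on which $\pxk$ is bounded below by lower semi-continuity and on which the Jacobian is controlled, then shrink $\Uo_x$ so the diffeomorphic images all contain a common ball). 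Everything else is the standard submersion/change-of-variables argument. Finally, $\nu_k(A):=c\,\mleb(A\cap\Vo_x^\w)$ is a nontrivial measure (as $\Vo_x^\w$ is a nonempty open set), so $\Uo_x$ is $\nu_k$-small by definition.

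\medskip

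\noindent\textbf{Part (ii).} Now assume additionally $F\in C^\infty$ and that \eqref{eq:wsc} holds for some $x$, $k$, $c>0$ and open $\Vo$. The goal is to exhibit a $k$-steps path $\w$ from $x$ to $\Vo$ with $\rank\Cxk(\w)=n$. The plan is a Sard-type argument, mirroring \cite[Proposition~2.3]{controllabilityLieJakubczyk} / the converse direction in \cite{meyn-caines1991}. Suppose, for contradiction, that $\rank\Cxk(\w)<n$ for \emph{every} $\w\in\Oxk$ with $\Sxk(\w)\in\Vo$. Since $\Cxk$ is the Jacobian of the $C^\infty$ map $\Sxk:\Oxk\to\xdom$ (here $C^\infty$ of $F$ gives $C^\infty$ of $\Sxk$ via Lemma~\ref{lm:scp}), every such $\w$ is a critical point of $\Sxk$; by Sard's theorem the set of critical values $\Sxk(\{\w\in\Oxk:\rank\Cxk(\w)<n\})$ has Lebesgue measure zero, so in particular $\Sxk(\Oxk)\cap\Vo$ has measure zero. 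But then $P^k(x,\Vo)=\int\1_{\Vo}(\Sxk[k][x](\w))\pxk[k][x](\w)\,d\w=0$, because the integrand is supported on the $\w$'s with $\Sxk[k][x](\w)\in\Vo$, i.e.\ on a set whose image has measure zero — more precisely, $\1_\Vo(\Sxk[k][x](\w))=0$ off $\Sxk^{-1}(\Vo)$, and on $\Sxk^{-1}(\Vo)$ we need the image $\Sxk(\Sxk^{-1}(\Vo))\subseteq\Sxk(\Oxk)\cap\Vo$ having measure zero to conclude the kernel vanishes. This contradicts \eqref{eq:wsc}, which forces $P^k(x,\Vo)\ge c\,\mleb(\Vo)>0$. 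Hence there exists $\w\in\Oxk$ with $\Sxk(\w)\in\Vo$ and $\rank\Cxk(\w)=n$, i.e.\ a $k$-steps path from $x$ to $\Vo$ along which the controllability matrix has full rank. The only subtlety here is making the "image has measure zero $\Rightarrow$ kernel is zero" step rigorous: one writes $P^k(x,\Vo)=\int_{\Sxk^{-1}(\Vo)}\pxk[k][x](\w)\,d\w$ and notes that $\Sxk(\Sxk^{-1}(\Vo))$ has measure zero by Sard, but to kill the integral one actually argues on the level of the kernel $P^k(x,\cdot)$: by part (i) applied at points where the rank is $n$... — cleaner is to observe directly that $P^k(x,\cdot)$ restricted to $\Vo$ is absolutely continuous with respect to the pushforward of $\pxk[k][x]\,d\w$ under $\Sxk$, whose support in $\Vo$ is contained in the measure-zero set of critical values, so $P^k(x,\Vo)=0$. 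I would present this last implication carefully, as it is where the $C^\infty$ hypothesis (needed for Sard in the form "critical values have measure zero" without dimension restrictions — here the domain $\R^{pk}$ can have dimension $\ge n$, so $C^1$ is not enough and $C^\infty$, or at least $C^{pk-n+1}$, is required) is genuinely used.
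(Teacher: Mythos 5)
Your plan for part (i) is essentially the paper's own argument: the paper first uses the lower semi-continuity of $(y,\u)\mapsto \pxk[k][y](\u)$ to get $p_0>0$ and $\delta>0$ with $P^k(y,A)\geq p_0\int_{B(\w,\delta)}\1_A(\Sxk[k][y](\u))\,d\u$ for all $y\in B(x,\delta)$, and then applies a $C^1$ version of \cite[Lemma~3.0]{meyn-caines1991}, which is exactly your uniform partial change of variables (splitting $\u$ into the $n$ coordinates where the Jacobian block is invertible and the remaining ones). One caution on how you phrase it: you cannot carry out the final estimate with a single \emph{fixed reference} value of $\w^{(2)}$, since that slice is Lebesgue-null in $\R^{pk}$ and contributes nothing to the integral; the complementary coordinates must range over a small box, the images of the fiberwise diffeomorphisms must contain a common open set $\Vo_x^\w$ uniformly in $(y,\w^{(2)})$ (which follows from the same continuity argument you already use for $y$), and the constant then also carries the volume of that box. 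With that reading, (i) is correct and is the same proof as in the paper, just with the Meyn--Caines lemma unfolded.

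Part (ii) contains a genuine gap. The implication ``every $\w\in\Oxk$ with $\Sxk(\w)\in\Vo$ is a critical point $\Rightarrow P^k(x,\Vo)=0$'' is false: $P^k(x,\cdot)$ is the pushforward of $\pxk(\w)\,d\w$ under $\Sxk$, and such a pushforward need not be absolutely continuous with respect to $\mleb$ (if $F(x,\cdot)$ were constant the kernel would be a Dirac mass), so it may assign positive --- even full --- mass to the Lebesgue-null set of critical values inside $\Vo$. Your proposed repair (``absolutely continuous with respect to the pushforward'') is circular for the same reason: that pushforward \emph{is} $P^k(x,\cdot)$ and can charge Lebesgue-null sets, so concentration on a null set does not give $P^k(x,\Vo)=0$. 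The paper's proof sidesteps this by exploiting that \eqref{eq:wsc} holds for \emph{all} $A$: letting $N:=\lbrace \Sxk(\w) \,|\, \w\in\Oxk,\ \rank\Cxk(\w)<n\rbrace$ be the set of critical values (Lebesgue-null by Sard, using $F\in C^\infty$), it applies \eqref{eq:wsc} to $A=\Vo\setminus N$ to get $P^k(x,\Vo\setminus N)\geq c\,\mleb(\Vo)>0$, which directly yields some $\w\in\Oxk$ with $\Sxk(\w)\in\Vo\setminus N$, i.e.\ a $k$-steps path into $\Vo$ at which the controllability matrix has rank $n$. Equivalently, your contradiction argument is salvaged by showing $P^k(x,\Vo\setminus N)=0$ under the contradiction hypothesis (this \emph{is} true, since almost surely the chain lands in $\Sxk(\Oxk)\cap\Vo\subset N$) and contrasting it with \eqref{eq:wsc} applied to $\Vo\setminus N$, rather than trying to show $P^k(x,\Vo)=0$. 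Your closing remark that Sard requires more than $C^1$ (here $C^\infty$, or $C^{kp-n+1}$) is correct and matches where the extra smoothness is genuinely used.
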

The proof of this proposition is very similar to the proof of \cite[Theorem~2.1]{meyn-caines1991}. We present it in the appendix where we additionally highlight \todo{is it true?}\alex{yep} the differences to the proof of \cite[Theorem~2.1]{meyn-caines1991}. 
Note that if $F$ is $C^\infty$ and \eqref{eq:wsc} holds for some $x \in \xdom$, then by Proposition~\ref{pr:2.1} $\mathrm{(i)}$ and $\mathrm{(ii)}$,  \eqref{eq:wsc} holds for all $y$ in an open neighborhood of $x$ and $\Vo$ a possibly smaller open set.

We deduce from Proposition~\ref{pr:2.1} that if for all $x \in \xdom$ there exists $k \in \Zplusstrict$ and $\w \in \Oxk$ for which $\rank(\Cxk(\w)) = n$, the state-space may be written as the union of open small sets and hence $\markov$ is a $T$-chain (see \cite[Proposition~6.2.3, Proposition~6.2.4]{meyntweedie}). This result is formalized in the following corollary, which can be seen as a generalization \new{to our model} of \cite[Proposition~7.1.5]{meyntweedie}.

\begin{corollary} \label{cr:controlTchain}
Suppose that $\markov$ follows model \eqref{eq:fmodel} and that conditions $\mathrm{A1}-\mathrm{A5}$ are satisfied. 
Suppose that for all $x \in \xdom$, $\rank( \Cxk(\w)) = n$ for some $k \in \Zplusstrict$ and $\w \in \Oxk$. Then $\xdom$ can be written as the union of open small sets and thus $\markov$ is a $T$-chain.
\end{corollary}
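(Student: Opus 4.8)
The plan is to combine Proposition~\ref{pr:2.1}~(i) with the two characterizations of $T$-chains via open small sets given in \cite[Proposition~6.2.3, Proposition~6.2.4]{meyntweedie}. First I would fix an arbitrary $x \in \xdom$. By hypothesis there exist $k = k(x) \in \Zplusstrict$ and $\w = \w(x) \in \Oxk$ with $\rank(\Cxk(\w)) = n$. Proposition~\ref{pr:2.1}~(i) then provides a constant $c = c_x > 0$ and open sets $\Uo_x \ni x$ and $\Vo_x^\w \ni \Sxk(\w)$ such that $P^k(y,A) \geq c_x \mleb(A \cap \Vo_x^\w)$ for all $y \in \Uo_x$ and $A \in \borel(\xdom)$. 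In particular $\Uo_x$ is an open $\nu_{k(x)}$-small set, with $\nu_{k(x)} : A \mapsto c_x \mleb(A \cap \Vo_x^\w)$ a non-trivial measure (non-trivial because $\Vo_x^\w$ is a nonempty open subset of $\xdom \subset \R^n$, hence has positive Lebesgue measure).

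Next I would observe that $\{\Uo_x : x \in \xdom\}$ is an open cover of $\xdom$, since each $\Uo_x$ contains $x$; thus $\xdom = \bigcup_{x \in \xdom} \Uo_x$ is written as a union of open small sets, which is the first assertion. For the $T$-chain conclusion I would invoke the machinery of \cite[Chapter~6]{meyntweedie}: a small set is petite, so $\xdom$ is covered by open petite sets, and by \cite[Proposition~6.2.4]{meyntweedie} this implies $\markov$ is a $T$-chain (alternatively, one can extract a countable subcover, since $\xdom$ is second countable as a subset of $\R^n$, form an appropriate countable convex combination of the associated minorizing measures weighted so as to obtain a continuous component that is everywhere positive, and conclude via \cite[Proposition~6.2.3]{meyntweedie}). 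Since all the nontrivial work is already packaged in Proposition~\ref{pr:2.1}, the remaining argument is essentially a citation.

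The main obstacle — and it is a mild one — is the passage from ``$\xdom$ is a union of open small sets'' to ``$\markov$ is a $T$-chain'': one must be slightly careful that the $k$ appearing in each small-set minorization varies with $x$, so the continuous component has to be built from a sampling distribution $b$ on $\N$ mixing over the different values $k(x)$, rather than from a single $P^k$. This is exactly the content of \cite[Proposition~6.2.3 and Proposition~6.2.4]{meyntweedie} (second countability of $\xdom$ lets us reduce to countably many $x$'s, and a suitably weighted mixture yields the lower semi-continuous component $T$ with $T(y,\xdom)>0$ for all $y$), so no genuinely new argument is needed; the proof is short.
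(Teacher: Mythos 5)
Your proposal is correct and follows essentially the same route as the paper: apply Proposition~\ref{pr:2.1}~(i) at every $x$ to obtain an open small (hence petite) set $\Uo_x$ containing $x$, so that $\xdom$ is covered by open small sets, and then conclude via \cite[Proposition~6.2.3]{meyntweedie} (continuous component non-trivial on each $\Uo_x$) and \cite[Proposition~6.2.4]{meyntweedie} that $\markov$ is a $T$-chain. The point you flag about $k$ varying with $x$ is indeed absorbed by these two propositions, exactly as in the paper's proof.
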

\anne{I added again the union of open small sets as this is the key idea (given by the reviewer and that I will also personnally remember)}

\begin{proof}
Under the conditions of the corollary, the conditions of Proposition~\ref{pr:2.1} $(\mathrm{i})$ are satisfied for all $x \in \xdom$. Hence, for all $x \in \xdom$, there exists $\Uo_x$ an open $\nu_k$-small set containing $x$.
A $\nu_k$-small set is a $\nu_a$-petite set (with $a$ a Dirac distribution at $k$), hence, according to \cite[Proposition~6.2.3]{meyntweedie}, $K_a$ possesses a continuous component $T_x$ non trivial on all of $\Uo_x$, and so in particular non trivial at $x$. Hence, according to \cite[Proposition~6.2.4]{meyntweedie} $\markov$ is a $T$-chain.
\end{proof}

To prove the equivalence between  $\varphi$-irreducibility and the existence of a globally attracting state, we \del{need to}\new{first} \alex{we don't really need to} characterize the support of the maximal irreducibility measure in terms of globally attracting states. More precisely the following holds.
\begin{proposition} \label{pr:support}
Suppose that $\markov$ is a $\psi$-irreducible Markov chain following model \eqref{eq:fmodel}, with $\psi$ the maximal irreducibility measure, that $\mathrm{A1}-\mathrm{A4}$ hold and that $F$ is $C^0$. Then
\begin{equation} \label{eq:suppGA}
\supp \psi = \lbrace \xstar \in \xdom | \, \xstar \textrm{ is globally attracting} \rbrace .
\end{equation}
Furthermore, let $\xstar \in \xdom$ be globally attracting, then
\begin{equation} \label{eq:suppAxstar}
\supp \psi = \overline{A_+(\xstar)} .
\end{equation}
\end{proposition}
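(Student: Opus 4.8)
The plan is to prove the two equalities in turn, using the characterization of globally attracting states (Proposition~\ref{pr:GA}), the correspondence between $k$-steps paths and positive transition probabilities (Proposition~\ref{pr:pointproba}), and standard facts about the maximal irreducibility measure $\psi$, in particular that $\supp\psi$ is closed, absorbing in the appropriate sense, and that $x \in \supp\psi$ iff every open neighborhood of $x$ has positive $\psi$-measure, together with \cite[Proposition~4.2.3 / Proposition~6.1.5]{meyntweedie} relating $\supp\psi$ to reachable points. Actually, since Corollary~\ref{cr:GAreachability} already gives (under A1--A4 and $F$ continuous) the equivalence "globally attracting for CM(F) $\Leftrightarrow$ reachable for $\markov$," the first equality \eqref{eq:suppGA} will follow once I show $\supp\psi = \{x : x \text{ reachable}\}$. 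This is essentially \cite[Proposition~6.2.1]{meyntweedie}, but to be self-contained I would argue both inclusions directly.

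For the inclusion $\supp\psi \subseteq \{\text{globally attracting}\}$: let $\xstar \in \supp\psi$ and let $\Uopen$ be any open neighborhood of $\xstar$; then $\psi(\Uopen) > 0$ by definition of the support, so by $\psi$-irreducibility $\sum_{k\geq 1} P^k(y,\Uopen) > 0$ for every $y \in \xdom$, hence $P^k(y,\Uopen) > 0$ for some $k \in \Zplusstrict$, and by Proposition~\ref{pr:pointproba} there is a $k$-steps path from $y$ to $\Uopen$. Since $y$ and $\Uopen$ were arbitrary, Proposition~\ref{pr:GA}~(ii) gives that $\xstar$ is globally attracting. For the reverse inclusion, let $\xstar$ be globally attracting. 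I would use that $\supp\psi$ is nonempty and closed, pick $z \in \supp\psi$, and use \cite[Theorem~4.0.1 or Proposition~4.2.3]{meyntweedie} to the effect that $\psi(\,\cdot\,) \asymp \sum_k P^k(z^*,\cdot)/2^k$ type representations; more concretely, since $\xstar$ is globally attracting, for every open $\Uopen \ni \xstar$ there is a path from $z$ to $\Uopen$, hence $P^k(z,\Uopen) > 0$ for some $k$ by Proposition~\ref{pr:pointproba}, and then using that $\psi$ can be taken as $\psi(A) = \sum_k 2^{-k} P^k(z,A)$ for a suitable reachable $z$ (or invoking \cite[Proposition~6.2.1(iii)]{meyntweedie} directly), we get $\psi(\Uopen) > 0$; as this holds for every neighborhood of $\xstar$, $\xstar \in \supp\psi$.

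For the second equality \eqref{eq:suppAxstar}: fix a globally attracting $\xstar$. Since $\xstar \in \supp\psi$ by the first part, and $\supp\psi$ is closed and (for a $\psi$-irreducible chain) satisfies $\overline{A_+(x)} \subseteq \supp\psi$ for any $x \in \supp\psi$ — because each point of $A_+(x)$ is a limit endpoint of a path from $x$, hence by Proposition~\ref{pr:pointproba} is charged with positive probability in finitely many steps, and $\supp\psi$ absorbs the reachable sets of its own points — we get $\overline{A_+(\xstar)} \subseteq \supp\psi$. Conversely, by Proposition~\ref{pr:GA}~(i), every globally attracting state $\ystar$ lies in $\overline{A_+(\xstar)}$ (taking $\y = \xstar$ in the definition, using that $\xstar$ is globally attracting so $A_+(\xstar)$ is a legitimate source set), so by the first equality $\supp\psi = \{\text{globally attracting states}\} \subseteq \overline{A_+(\xstar)}$. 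The main obstacle I anticipate is the careful bookkeeping in the direction $\{\text{globally attracting}\} \subseteq \supp\psi$: one must produce a single reachable point $z$ (or directly a version of $\psi$) witnessing that neighborhoods of an arbitrary globally attracting state get positive mass, which requires combining global attractivity with the structure of maximal irreducibility measures rather than just the path/kernel dictionary; once that is set up, everything else is a routine chase through Propositions~\ref{pr:GA} and \ref{pr:pointproba}.
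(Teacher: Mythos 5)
Your handling of \eqref{eq:suppGA} is essentially the paper's: the inclusion $\supp\psi\subseteq\{\xstar : \xstar \text{ globally attracting}\}$ is argued exactly as in the paper (neighborhoods of support points have positive $\psi$-mass, $\psi$-irreducibility gives $P^k(y,\Uopen)>0$, then Proposition~\ref{pr:pointproba} and Proposition~\ref{pr:GA}), and the reverse inclusion in both your plan and the paper ultimately rests on the standard fact that a reachable point lies in the support of the \emph{maximal} irreducibility measure. Your specific suggestion that $\psi$ ``can be taken as'' $\sum_k 2^{-k}P^k(z,\cdot)$ is not quite right (such a measure need not be maximal); what does the job is the maximality property itself ($\psi(A)=0$ implies the set of states from which $A$ is accessible is $\psi$-null, so any open set accessible from every state has positive $\psi$-mass), and the paper is equally terse on this point. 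The inclusion $\supp\psi\subseteq\overline{A_+(\xstar)}$ via Proposition~\ref{pr:GA}$\mathrm{(i)}$ is also the paper's argument.

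The genuine gap is in the inclusion $\overline{A_+(\xstar)}\subseteq\supp\psi$. You derive it from the claim that $\overline{A_+(x)}\subseteq\supp\psi$ for every $x\in\supp\psi$, justified by noting that each $y\in A_+^k(x)$ satisfies $P^k(x,\Vo)>0$ for every open $\Vo\ni y$ ``and $\supp\psi$ absorbs the reachable sets of its own points''. That last clause is precisely what has to be proved, and it is not a standard fact at this level of generality: for a general $\psi$-irreducible chain, $x\in\supp\psi$ together with $P^k(x,\Vo)>0$ does \emph{not} force $\psi(\Vo)>0$ (a single support point may reach a $\psi$-null open set), and the usual ``$\supp\psi$ is closed and absorbing'' results require (weak) Feller or $T$-chain hypotheses, which Proposition~\ref{pr:support} does not assume (the rank condition is not in force here). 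The claim is true in this setting, but only through the model structure: from $P^k(x,\Vo)>0$ extract a $k$-steps path $\w$ from $x$ to $\Vo$; joint lower semi-continuity of $(z,\w)\mapsto p_z^k(\w)$ and continuity of $(z,\w)\mapsto S_z^k(\w)$ (Lemmas~\ref{lm:scp} and~\ref{lm:plsc}) then give an open neighborhood $\Oo_x$ of $x$ with $P^k(z,\Vo)>0$ for all $z\in\Oo_x$; since $x\in\supp\psi$ gives $\psi(\Oo_x)>0$, maximality of $\psi$ yields $\psi(\Vo)>0$, hence $y\in\supp\psi$, and closedness of the support finishes. This perturbation-of-the-initial-point step is exactly the ingredient the paper uses instead (it shows every $y\in\overline{A_+(\xstar)}$ is itself globally attracting by steering any $z$ into a small neighborhood of $\xstar$ and then reusing, for nearby starting points, the path from $\xstar$ to near $y$); without some version of it your argument does not go through as written.
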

We are now ready to state our result generalizing \cite[Proposition~7.2.6]{meyntweedie} \new{to our model}.
\begin{theorem} \label{th:irr}
Suppose that $\markov$ follows model \eqref{eq:fmodel} and that conditions $\mathrm{A1}-\mathrm{A5}$ are satisfied. Suppose that for all $x \in \xdom$, there exists $k \in \Zplusstrict$ and $\w \in \Oxk$ such that $\rank \Cxk(\w) = n$. Then $\markov$ is $\varphi$-irreducible if and only if a globally attracting state exists.
\end{theorem}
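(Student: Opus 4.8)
The plan is to prove the two implications separately, leveraging the machinery already assembled. Throughout, write $R$ for the hypothesis that for all $x \in \xdom$ there exist $k \in \Zplusstrict$ and $\w \in \Oxk$ with $\rank \Cxk(\w) = n$; note that $R$ is precisely the hypothesis of Corollary~\ref{cr:controlTchain}, so $\markov$ is a $T$-chain, and by Proposition~\ref{pr:forwardaccess} $\CM(F)$ is forward accessible.

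\emph{($\Leftarrow$) Existence of a globally attracting state implies $\varphi$-irreducibility.} Let $\xstar$ be a globally attracting state. By Corollary~\ref{cr:controlTchain} there is an open $\nu_k$-small set $\Uo_{\xstar}$ containing $\xstar$ with $\nu_k(\cdot) = c\,\mleb(\cdot \cap \Vo^{\w}_{\xstar})$ for some $k$, some $\w \in \Oxk[k][\xstar]$ and some nonempty open $\Vo^{\w}_{\xstar}$; this follows from Proposition~\ref{pr:2.1}(i) applied at $\xstar$, where $R$ guarantees the rank condition. I claim $\nu_k$ is an irreducibility measure. Indeed, take $A$ with $\nu_k(A) > 0$, i.e.\ $\mleb(A \cap \Vo^{\w}_{\xstar}) > 0$, so $A \cap \Vo^{\w}_{\xstar}$ is a set of positive Lebesgue measure inside the open set $\Vo^{\w}_{\xstar}$. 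Fix any $y \in \xdom$. Since $\xstar$ is globally attracting, Proposition~\ref{pr:GA}(ii) gives some $m \in \Zplusstrict$ and an $m$-steps path from $y$ to $\Uo_{\xstar}$; by Proposition~\ref{pr:pointproba} (with $F$ continuous, which holds by A5), $P^m(y, \Uo_{\xstar}) > 0$. Then by the small-set inequality \eqref{eq:2.1},
\begin{equation*}
P^{m+k}(y, A) \;\geq\; \int_{\Uo_{\xstar}} P^m(y, dz)\, P^k(z, A) \;\geq\; P^m(y,\Uo_{\xstar})\, c\, \mleb(A \cap \Vo^{\w}_{\xstar}) \;>\; 0 .
\end{equation*}
Hence $\sum_{j \in \Zplusstrict} P^j(y,A) > 0$ for every $y$, so $\markov$ is $\nu_k$-irreducible, and in particular $\varphi$-irreducible.

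\emph{($\Rightarrow$) $\varphi$-irreducibility implies existence of a globally attracting state.} Suppose $\markov$ is $\varphi$-irreducible, and let $\psi$ be its maximal irreducibility measure. Since $\markov$ is a $T$-chain (Corollary~\ref{cr:controlTchain}), it is in particular $\psi$-irreducible with $\supp \psi$ nonempty; moreover $\psi$ is nontrivial, so $\supp \psi \neq \emptyset$. By Proposition~\ref{pr:support} (whose hypotheses A1--A4 and $F \in C^0$ follow from A1--A5), $\supp \psi = \{\xstar \in \xdom : \xstar \text{ is globally attracting}\}$. Since $\supp \psi$ is nonempty, a globally attracting state exists. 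This completes the proof. \qed

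The main obstacle is not in the logical skeleton above but in ensuring the two cited external ingredients are genuinely in hand: Proposition~\ref{pr:support} (the identification of $\supp \psi$ with the set of globally attracting states) for the forward direction, and Proposition~\ref{pr:2.1}(i) together with the forward-accessibility/$T$-chain consequences of $R$ for the converse. If one wished to avoid invoking Proposition~\ref{pr:support} as a black box, the forward direction could instead be argued by taking any $\xstar \in \supp \psi$, using the $T$-chain property to produce an open small set around it, and then showing via Proposition~\ref{pr:pointproba} and a topological/forward-accessibility argument that every point of $\supp\psi$ is reachable, hence (by Corollary~\ref{cr:GAreachability}) globally attracting — but since Proposition~\ref{pr:support} is already stated in the excerpt, the clean route is simply to quote it.
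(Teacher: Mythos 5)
Your proof is correct. The forward direction ($\varphi$-irreducibility $\Rightarrow$ existence of a globally attracting state) is exactly the paper's argument: invoke Proposition~\ref{pr:support} and the non-emptiness of $\supp\psi$. Where you diverge is the converse. The paper argues it in two citations: the globally attracting state is reachable by Corollary~\ref{cr:GAreachability}, the rank hypothesis makes $\markov$ a $T$-chain by Corollary~\ref{cr:controlTchain}, and then \cite[Proposition~6.2.1]{meyntweedie} (a $T$-chain with a reachable point is $\varphi$-irreducible) finishes. You instead construct an irreducibility measure explicitly: Proposition~\ref{pr:2.1}(i) applied at $\xstar$ gives the open small set $\Uo_{\xstar}$ and the measure $c\,\mleb(\cdot\cap\Vo_{\xstar}^{\w})$, and global attractivity plus Propositions~\ref{pr:GA}(ii) and \ref{pr:pointproba} plus Chapman--Kolmogorov give $P^{m+k}(y,A)\geq P^m(y,\Uo_{\xstar})\,c\,\mleb(A\cap\Vo_{\xstar}^{\w})>0$ for every $y$. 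This is precisely the mechanism the paper deploys later in the proof of Theorem~\ref{th:ap}, so it is fully consistent with the available machinery; what it buys you is self-containedness (no appeal to Meyn--Tweedie's Proposition~6.2.1) and a weaker requirement for this direction --- you only need the small set at $\xstar$ itself, not the full $T$-chain property on all of $\xdom$ --- while the paper's route is shorter and makes the structural point ($T$-chain plus reachable point) explicit. Two cosmetic remarks: attributing the small set $\Uo_{\xstar}$ to Corollary~\ref{cr:controlTchain} is slightly off, since that corollary's statement only asserts the union-of-small-sets/$T$-chain conclusion --- but you immediately give the correct source, Proposition~\ref{pr:2.1}(i); and in the forward direction the $T$-chain property is not needed at all, nontriviality of $\psi$ alone gives $\supp\psi\neq\emptyset$.
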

\begin{proof}
Suppose that for all $x \in \xdom$ there exists $k \in \Zplusstrict$ and $\w \in \Oxk$ such that $\rank \Cxk(\w) = n$. If $\markov$ is a $\varphi$-irreducible chain, then by Proposition~\ref{pr:support} any point of its support is globally attracting. Since $\varphi$ is not trivial, its support is not empty and so there exists a globally attracting state. 

Conversely, suppose that there exists $\xstar$ a globally attracting point, which is also reachable by Corollary~\ref{cr:GAreachability}. By hypothesis, according to Corollary~\ref{cr:controlTchain}, $\markov$ is a $T$-chain and so by \cite[Proposition~6.2.1]{meyntweedie},\mathnote{Prop 6.2.1: A T-chain that contains one reachable point is $\varphi$-irreducible.} $\markov$ is $\varphi$-irreducible, which concludes the proof.
\end{proof}

As discussed above, in the particular case of $\alpha(x,u)=u$ and $F$ is $C^\infty$, the similar result derived in \cite[Proposition~7.2.6]{meyntweedie}  states that if CM(F) is forward accessible, the associated Markov chain is $\varphi$-irreducible if and only if CM(F) admits a globally attracting state. In the more general context of bounded positive kernels, the existence of an open reachable\footnote{A set $A \in \borel(\xdom)$ is said reachable if for all $x \in \xdom$ there exists some $k \in \Zplusstrict$ for which $P^k(x, A) > 0$.} petite set is equivalent to the Markov chain being a $\varphi$-irreducible $T$-chain for which compact sets are petite~\cite[Theorem~2.4]{cline2011irreducibility}.


From Proposition~\ref{pr:gacontrol}, we  know that if the rank condition on the controllability matrix is satisfied at a globally attracting state, it is satisfied for all $x \in \xdom$. Hence, we can deduce the practical condition that if there exists a globally attracting state where the rank condition on the controllability matrix is satisfied, then the associated Markov chain is a $\varphi$-irreducible, T-chain and thus every compact set is petite.

%

\begin{theorem}[Practical Condition for $\varphi$-irreducibility]\label{th:irr-practical}
Suppose that $\markov$ follows model \eqref{eq:fmodel} and that conditions $\mathrm{A1}-\mathrm{A5}$ are satisfied.
If there exists $\xstar$ a globally attracting state, and if $\rank \Cxk[k][\xstar](\wstar) = n$ for some $k \in \Zplusstrict$ and $\wstar \in \Oxk[k][\xstar]$, then $\markov$ is a $\varphi$-irreducible $T$-chain, and thus every compact set is petite. \alex{we can be more precise and say that there exists an open neighborhood of $\Sxk[k][\xstar](\wstar)$ where the irreducibility measure dominates the Lebesgue measure. Unrelatedly, we get the every compact set is petite for free.}
\end{theorem}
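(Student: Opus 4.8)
The statement is essentially the converse half of Theorem~\ref{th:irr} upgraded by Proposition~\ref{pr:gacontrol}, so the plan is mostly assembly of earlier results. First I would invoke Proposition~\ref{pr:gacontrol}: since $\xstar$ is globally attracting and $\rank \Cxk[k][\xstar](\wstar)=n$, for every $x\in\xdom$ there exist $T\in\Zplusstrict$ and $\uu\in\Oxk[T][x]$ with $\rank \Cxk[T][x](\uu)=n$. This is precisely the hypothesis of Corollary~\ref{cr:controlTchain}, whence $\xdom$ is a union of open small sets and $\markov$ is a $T$-chain.

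Next, $\xstar$ being globally attracting is reachable for $\markov$ by Corollary~\ref{cr:GAreachability} (applicable since $F$ is $C^1$, hence $C^0$). A $T$-chain that possesses a reachable point is $\varphi$-irreducible by \cite[Proposition~6.2.1]{meyntweedie}, so $\markov$ is a $\varphi$-irreducible $T$-chain. That every compact set is petite then follows from the standard fact that compact sets are petite for $\psi$-irreducible $T$-chains, see \cite[Theorem~6.2.5]{meyntweedie}.

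For the sharper statement that the maximal irreducibility measure $\psi$ dominates $\mleb$ on a neighbourhood of $\Sxk[k][\xstar](\wstar)$, I would apply Proposition~\ref{pr:2.1}~(i) at $(\xstar,\wstar)$: there are $c>0$ and open sets $\Uo_{\xstar}\ni\xstar$ and $\Vo_{\xstar}^{\wstar}\ni\Sxk[k][\xstar](\wstar)$ with $P^k(y,A)\ge c\,\mleb(A\cap\Vo_{\xstar}^{\wstar})$ for all $y\in\Uo_{\xstar}$ and $A\in\borel(\xdom)$. Since $\xstar$ is globally attracting, Proposition~\ref{pr:support} gives $\xstar\in\supp\psi$, so $\psi(\Uo_{\xstar})>0$. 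If $\mleb(A\cap\Vo_{\xstar}^{\wstar})>0$ then $P^k(y,A)>0$ for every $y\in\Uo_{\xstar}$, hence $\{y:\sum_{n}P^n(y,A)>0\}\supseteq\Uo_{\xstar}$ has positive $\psi$-measure, which by the characterization of the maximal irreducibility measure \cite[Theorem~4.0.1]{meyntweedie} forces $\psi(A)>0$; thus $\psi$ dominates $\mleb$ restricted to $\Vo_{\xstar}^{\wstar}$.

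I do not expect a genuine obstacle here: all the analytic content is already contained in Propositions~\ref{pr:gacontrol} and~\ref{pr:2.1} and in Corollaries~\ref{cr:controlTchain} and~\ref{cr:GAreachability}. The only points needing care are (i) checking that the standing hypotheses (in particular $\mathrm{A1}$--$\mathrm{A5}$ and $F\in C^0$) are in force each time an earlier result is quoted, and (ii) in the refinement, routing the argument through $\supp\psi$ — which is exactly where global attractivity of $\xstar$ is used — rather than attempting to deduce $\psi(A)>0$ directly from reachability of $\xstar$.
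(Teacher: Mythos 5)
Your proof is correct and follows essentially the same route as the paper's: Proposition~\ref{pr:gacontrol} to propagate the rank condition, Corollary~\ref{cr:controlTchain} for the $T$-chain property, reachability of the globally attracting state plus \cite[Proposition~6.2.1]{meyntweedie} for $\varphi$-irreducibility, and \cite[Theorem~6.2.5]{meyntweedie} for petiteness of compact sets; the paper simply quotes Theorem~\ref{th:irr} where you inline its converse direction, which is the same argument. Your extra paragraph on $\psi$ dominating $\mleb$ near $\Sxk[k][\xstar](\wstar)$ is a correct bonus beyond what the statement requires.
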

\begin{proof}
Suppose there exists $\xstar$ a globally attracting state, $k \in \Zplusstrict$ and $\wstar \in \Oxk[k][\xstar]$ such that $\rank \Cxk[k][\xstar](\wstar) = n$. By Proposition~\ref{pr:gacontrol} for all $x \in \xdom$ there exists $t \in \Zplusstrict$ and $\w \in \Oxk[t]$ for which $\rank \Cxk[t](\w) = n$. According to Corollary~\ref{cr:controlTchain}, $\markov$ is a T-chain and according to Theorem~\ref{th:irr}, $\markov$ is $\varphi$-irreducible. Hence according to \cite[Theorem~6.2.5]{meyntweedie} every compact set is petite.
\mathnote{Another way to prove it using Cline's paper: with Proposition~\ref{pr:2.1}, there exists $\Uo_{\xstar}$ an open small set containing $\xstar$. Since $\xstar$ is globally attracting, by Proposition~\ref{pr:GA} there exists $t \in \Zplusstrict$ and $\u \in \Oxk[t][\xstar]$ a $t$-steps path from $\xstar$ to $\Uo_{\xstar}$, and so by Proposition~\ref{pr:pointproba}, $P^t(\xstar, \Uo_{\xstar}) > 0$. Therefore by \cite[Theorem~2.2]{cline2011irreducibility} (and with the fact that $\xstar$ is reachable with Corollary~\ref{cr:GAreachability}), $\markov$ is $\varphi$-irreducible.}

\end{proof}
%

\subsection{Aperiodicity}
We show now that the results of the previous section can be transposed to prove $\varphi$-irreducibility \emph{and} aperiodicity of a Markov chain if we replace the condition of the existence of a globally attracting state  by the existence of a steadily attracting state. 

We first state the equivalence between the existence of a steadily attracting state and the $\varphi$-irreducibility and aperiodicity of the associated Markov chain.
\begin{theorem} \label{th:ap}
Consider a Markov chain $\markov$ following the model \eqref{eq:fmodel} for which conditions  $\mathrm{A1}-\mathrm{A5}$ are satisfied.  If for all $x \in \xdom$, there exists $k \in \Zplusstrict$ and $\w \in \Oxk$ for which $\rank(\Cxk(\w)) = n$, then $\markov$ is a $\varphi$-irreducible aperiodic Markov chain if and only if there exists a steadily attracting state.
\end{theorem}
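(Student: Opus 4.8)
The plan is to reduce the $\varphi$-irreducibility part to Theorem~\ref{th:irr} and to obtain aperiodicity by tying the period of $\markov$ to the lengths of closed paths of $\CM(F)$ through a well-chosen attainable state. Throughout, the standing rank hypothesis makes $\markov$ a $T$-chain (Corollary~\ref{cr:controlTchain}), so every compact set is petite, and Proposition~\ref{pr:pointproba} will be used to pass freely between ``there is a $k$-steps path from $x$ to an open set'' and ``$P^k(x,\cdot)$ is positive on that set''.

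\emph{Aperiodicity $\Rightarrow$ a steadily attracting state.} Assume $\markov$ is $\varphi$-irreducible and aperiodic. By Theorem~\ref{th:irr} a globally attracting state $\xstar$ exists; fix $k\in\Zplusstrict$ and $\wstar\in\Oxk[k][\xstar]$ with $\rank\Cxk[k][\xstar](\wstar)=n$ and set $\ystar:=\Sxk[k][\xstar](\wstar)$, which is attainable by Proposition~\ref{pr:GAEA}(ii). Applying Proposition~\ref{pr:EA2AA} to $\ystar$: with $E$ as in \eqref{eq:E} and $d:=\gcd(E)$, part~(iii) produces a $d$-cycle. Since $\markov$ is aperiodic its period is $1$, so a $d$-cycle can exist only for $d=1$; hence $d=1$, and Proposition~\ref{pr:EA2AA}(ii) gives that $\ystar$ is steadily attracting.

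\emph{A steadily attracting state $\Rightarrow$ aperiodicity.} Let $\xstar$ be steadily attracting. It is globally attracting (Proposition~\ref{pr:SGA}(i)), so $\markov$ is $\varphi$-irreducible by Theorem~\ref{th:irr}. Using the rank condition at $\xstar$, Proposition~\ref{pr:GAEA}(ii) again yields an attainable state $\ystar=\Sxk[k][\xstar](\wstar)$, which by Proposition~\ref{pr:SGA}(iii) (as $F$ is $C^0$ and a steadily attracting state exists) is itself steadily attracting. The crucial step is to show that the return set $R:=\{m\in\Zplusstrict:\ystar\in A_+^m(\ystar)\}$ eventually contains every integer. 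Indeed, by the rank condition at $\ystar$ and Proposition~\ref{pr:GAEA}(i) there are $k_0\in\Zplusstrict$, a neighbourhood $\Uo$ of $\ystar$, and an attainable point $\ystar'=\Sxk[k_0][\ystar](\cdot)$ reachable in $k_0$ steps from every point of $\Uo$; fix $a_0\in\Zplusstrict$ with $\ystar\in A_+^{a_0}(\ystar')$. Applying the definition of a steadily attracting state to $\ystar$ itself and to the neighbourhood $\Uo$ yields $T\in\Zplusstrict$ such that, for all $m\ge T$, there is an $m$-steps path from $\ystar$ to $\Uo$; concatenating it with a $k_0$-steps path into $\ystar'$ and an $a_0$-steps path back to $\ystar$ shows $m+k_0+a_0\in R$. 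Hence $R\supseteq\{n:n\ge T+k_0+a_0\}$, and in particular $1\in E$ and $\gcd(E)=1$.

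It then remains to deduce that the period $p$ of $\markov$ equals $1$. Suppose $p\ge2$, with $p$-cycle $(D_1,\dots,D_p)$. Proposition~\ref{pr:2.1}(i) provides $k_1\in\Zplusstrict$, $c>0$, a non-empty open set $\Vo$, and an open set $\Uo_\ystar\ni\ystar$ with $P^{k_1}(y,A)\ge c\,\mleb(A\cap\Vo)$ for all $y\in\Uo_\ystar$ and $A\in\borel(\xdom)$. For $y\in\Uo_\ystar\cap D_i$ one has $P^{k_1}(y,D_{i+k_1})=1$, so the small-set bound forces $\mleb(\Vo\setminus D_{i+k_1})=0$; since the $D_j$ are disjoint and $\Vo$ is non-empty open, the residue $i\bmod p$ is the same for all such $y$, and $\Vo\subseteq D_{i_0+k_1}$ up to an $\mleb$-null set for a fixed $i_0$. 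Then $\mleb(\Vo\cap D_{i_0+k_1})=\mleb(\Vo)$, and for every $m\in R$, Proposition~\ref{pr:pointproba} gives $P^m(\ystar,\Uo_\ystar)>0$, whence
\begin{equation*}
P^{m+k_1}(\ystar,D_{i_0+k_1})\ \ge\ \int_{\Uo_\ystar}P^m(\ystar,dy)\,P^{k_1}(y,D_{i_0+k_1})\ \ge\ c\,\mleb(\Vo)\,P^m(\ystar,\Uo_\ystar)\ >\ 0 .
\end{equation*}
Thus $\ystar$ hits the \emph{fixed} cyclic class $D_{i_0+k_1}$ after $m+k_1$ steps for every large $m$, whereas in a $p$-cycle with $p\ge2$ the step-counts at which $\ystar$ can hit a fixed cyclic class lie in a single residue class modulo $p$; this contradicts $R\supseteq\{n:n\ge T+k_0+a_0\}$, so $p=1$. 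The main obstacle is exactly this last transfer --- showing that the period divides every control-model return time of $\ystar$ --- which rests on combining the open small set at $\ystar$ (Proposition~\ref{pr:2.1}) with the cyclic decomposition; the one non-routine point there is that $\ystar$, being reachable in a $\psi$-irreducible $T$-chain, is phase-coherent with respect to $(D_1,\dots,D_p)$ (so that its hitting times of a fixed class are confined to one residue class mod $p$), and this is what justifies the final contradiction.
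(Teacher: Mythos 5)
Your first direction (irreducible and aperiodic $\Rightarrow$ a steadily attracting state exists) is correct and is exactly the paper's argument in contrapositive form: Theorem~\ref{th:irr} gives a globally attracting $\xstar$, Proposition~\ref{pr:GAEA} gives the attainable point $\ystar=\Sxk[k][\xstar](\wstar)$, and Proposition~\ref{pr:EA2AA} (ii)--(iii) converts aperiodicity into $\gcd(E)=1$ and hence steady attractivity of $\ystar$. The irreducibility half of the converse (via Proposition~\ref{pr:SGA}(i) and Theorem~\ref{th:irr}) is also fine.

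The aperiodicity half of the converse has a genuine gap at its last step. Your contradiction needs the claim that the times at which $\ystar$ can hit a \emph{fixed} cyclic class lie in a single residue class mod $p$, and you justify it by ``$\ystar$ is reachable in a $\psi$-irreducible $T$-chain, hence phase-coherent''. That implication is not proved in the paper and is false in general: the cyclic classes $D_1,\dots,D_p$ cover $\xdom$ only up to a $\psi$-null set, and a reachable point sitting in that null complement can reach several classes at the same time step (e.g.\ a two-periodic chain alternating between two intervals, with a reachable boundary point from which the first step lands in either interval with positive probability). Phase coherence is automatic only for points that actually belong to some $D_i$, and nothing in your argument places $\ystar$ in $\bigcup_i D_i$; you also never verify that $\Uo_{\ystar}\cap\bigcup_i D_i\neq\emptyset$, which your definition of $i_0$ requires. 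The paper avoids this by anchoring the contradiction at a point \emph{inside} a cyclic class: the bound $P^{k}(y,A)\ge c\,\mleb(A\cap\Vo_{\xstar}^{\wstar})$ for $y\in\Uo_{\xstar}$ makes $A\mapsto\mleb(A\cap\Vo_{\xstar}^{\wstar})$ an irreducibility measure, so some $D_i$ satisfies $\mleb(D_i\cap\Vo_{\xstar}^{\wstar})>0$; taking $y\in D_i$ and using steady attractivity \emph{from $y$} gives $P^{t+k}(y,D_i)>0$ for all large $t$, while the cycle structure forces $P^{md+1}(y,D_i)=0$ when $d\ge 2$, whence $d=1$. Your proof can be repaired along exactly these lines (your small-set computation and the positivity of $\mleb(\Vo)$ already supply all the ingredients; alternatively, use that $\ystar$ is globally attracting, so $\ystar\in\supp\psi$ by Proposition~\ref{pr:support}, to find a point of $\Uo_{\ystar}$ in some $D_i$ and argue from there), but as written the final transfer from the control-model return set $R$ to the period is not justified; the elaborate construction of $R$ via $\ystar'$, $k_0$ and $a_0$ is then unnecessary.
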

A related result proving a necessary and sufficient condition for a chain to be aperiodic has been derived for Markov chains following \eqref{eq:gmodel} under the assumption that the control set $\Oo_\xdom= \lbrace u \in \R^p \ | p(u) > 0 \rbrace$  is connected. More precisely, it has been shown that if there exists $\xstar$ a globally attracting state, then $\markov$ is aperiodic if and only if $\overline{A_+(\xstar)}$ is connected ~\cite[Proposition~7.2.5, 7.3.4, Theorem~7.3.5]{meyntweedie}.
Note that the condition that $\Oo_\xdom$ is connected is critical: if a Markov chain has a non-connected set $\Oo_\xdom$, then the equivalence that $\overline{A_+(\xstar)}$ is connected if and only if the Markov chain is aperiodic does not hold anymore.
A trivial (albeit artificial) example is to take an i.i.d. sequence of random variables $\{ \Uk : k \in \Zplusstrict \}$ with non connected support $(-2, -1) \cup (1,2)$, and to consider it as our Markov chain of interest $\pk = \U_k$. Then $A_+^k(x) = (-2, -1) \cup (1,2)$ for all $k \in \Zplusstrict$, and so $\overline{A_+(x)}$ is not connected for any $x \in \R$ but the Markov chain is aperiodic.

In the general context of a bounded positive weak-Feller transition kernel $Q$, if there exists $B$ a $\nu_a$-small set (i.e. $Q^a(x, A) \geq \nu_a(A)$ for all $x \in B$ and $A \in \borel(\xdom)$) and neighborhood of a reachable point $\xstar$ such that $Q^b(\xstar, B) > 0$ for some $b \in \Zplusstrict$ for which $\gcd(a,b) = 1$, then $Q$ is a $\varphi$-irreducible aperiodic $T$-chain; and a slightly weaker form of the converse holds~\cite[Theorem~2,6]{cline2011irreducibility}. In our more limited context, this can be shown using Theorem~\ref{th:ap} and Proposition~\ref{pr:saab}.

Similarly to Theorem~\ref{th:irr-practical}, we now deduce the following practical condition to prove the aperiodicity of a Markov chain.

\begin{theorem}[Practical condition for $\varphi$-irreducibility and aperiodicity]\label{th:ap-practical}
Consider a Markov chain $\markov$ following the model \eqref{eq:fmodel} for which conditions  $\mathrm{A1}-\mathrm{A5}$ are satisfied. If there exists $\xstar \in \xdom$ a steadily attracting state, $k \in \Zplusstrict$ and $\wstar \in \Oxk[k][\xstar]$ such that $\rank(\Cxk[k][\xstar](\wstar)) = n$, then $\markov$ is an aperiodic $\varphi$-irreducible $T$-chain, and every compact set is small.
\end{theorem}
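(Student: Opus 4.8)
The plan is to assemble the conclusion from results already proved, so that essentially no new analytic work is needed. First, since $\xstar$ is a steadily attracting state, Proposition~\ref{pr:SGA}~$(\mathrm{i})$ gives that $\xstar$ is globally attracting. Thus the hypotheses of Theorem~\ref{th:irr-practical} hold verbatim: a globally attracting state exists and $\rank \Cxk[k][\xstar](\wstar) = n$ for some $k \in \Zplusstrict$ and $\wstar \in \Oxk[k][\xstar]$. Applying Theorem~\ref{th:irr-practical} then yields that $\markov$ is a $\varphi$-irreducible $T$-chain and that every compact set is petite.

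Second, I would promote this to aperiodicity. Proposition~\ref{pr:gacontrol}, applied at the globally attracting state $\xstar$ where the rank condition holds, shows that for every $x \in \xdom$ there exist $T \in \Zplusstrict$ and $\uu \in \Oxk[T][x]$ with $\rank \Cxk[T][x](\uu) = n$. This is precisely the standing hypothesis of Theorem~\ref{th:ap}. Since a steadily attracting state exists by assumption, Theorem~\ref{th:ap} applies and gives that $\markov$ is $\varphi$-irreducible and aperiodic.

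Finally, to upgrade ``every compact set is petite'' to ``every compact set is small'', I would invoke the standard fact that for a $\psi$-irreducible aperiodic chain every petite set is small \cite[Theorem~5.5.7]{meyntweedie}; combined with the petiteness of compact sets from the first step, this finishes the proof. I do not anticipate a genuine obstacle: the argument is just a chaining of Propositions~\ref{pr:SGA} and \ref{pr:gacontrol} with Theorems~\ref{th:irr-practical} and \ref{th:ap}. The one point deserving care is the last one, namely that smallness of compact sets does \emph{not} follow from the $T$-chain/$\varphi$-irreducibility alone (those only yield petiteness, via \cite[Theorem~6.2.5]{meyntweedie}) and genuinely requires the aperiodicity, through the petite-implies-small implication.
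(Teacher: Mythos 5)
Your proposal is correct and follows essentially the same route as the paper's own proof: Proposition~\ref{pr:gacontrol} to propagate the rank condition to all of $\xdom$, Theorem~\ref{th:ap} for $\varphi$-irreducibility and aperiodicity, Theorem~\ref{th:irr-practical} for the $T$-chain property and petiteness of compact sets, and \cite[Theorem~5.5.7]{meyntweedie} to upgrade petite to small. Your explicit invocation of Proposition~\ref{pr:SGA}~$(\mathrm{i})$ (steadily attracting implies globally attracting) and your remark that the petite-to-small step genuinely needs aperiodicity are both correct refinements of steps the paper leaves implicit.
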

We give here an outline of how the existence of steadily attracting state implies the aperiodicity of the Markov chain while the full proofs the theorems can be found in the appendix.

Proposition~\ref{pr:2.1} $\mathrm{(i)}$ allows to construct a set $\Uo$ which is a neighborhood of $\xstar$ and a non-trivial measure $\mu_\Vo$ such that if a point $y \in \xdom$ can reach $\Uo$ with a $t$-steps path, then the kernel $P^{t\new{+k}}(y, \cdot)$ dominates the measure $\mu_\Vo$. Since $\xstar$ is steadily attractive, $\Uo$ can be reached from any point $y \in \xdom$ for all time $t \geq t_y$. Hence $P^{t}(y, \cdot)$ dominates $\mu_\Vo$ for all $t \geq t_y$, which implies that $\sum_{t \in \Zplusstrict} P^t(y, \cdot)$ dominates $\mu_\Vo$ for all $y \in \xdom$ and so that $\mu_\Vo$ is an irreducibility measure.

Now consider $(D_i)_{i = 1, \ldots, d}$ a $d$-cycle. Since $\mu_\Vo$ is an irreducibility measure, there exists $D_i$ such that $\mu_\Vo(D_i) > 0$, and with the fact that $P^{t\new{+k}}(y, \cdot)$ dominates $\mu_{\Vo}$ for all $t \geq t_y$, we deduce that $D_i$ can be reached with positive probability from all $y \in \xdom$ in $t$ steps with any $t \geq t_y$. 
By definition of the $d$-cycle, the Markov chain steps with probability one from a set of the cycle to the next, and so for any $m \in \Zplus$ the Markov chain goes in $md + 1$ steps from $D_i$ to $D_{i+1}$ with probability one. This contradicts that $D_i$ can be reached from anywhere (including itself) in $md + 1$ steps for $m$ large enough, unless $d = 1$ meaning the Markov chain is aperiodic.



Theorem~\ref{th:ap-practical} is a generalization of Proposition~3.2 in \cite{meyncaines1989} where a Markov chain following \eqref{eq:gmodel} with $G$ being $C^\infty$ is $\varphi$-irreducible and aperiodic if the control model is forward accessible and \emph{asymptotically controllable}\alex{I added an emphasis on asymptotically controllable, since we're defining it}, that is if there exists $\xstar \in \xdom$ such that for all $y \in \xdom$ there exists a sequence $\{ w_k : k \in \Zplusstrict\}$ with $w_k  \in \Oo_\xdom$ such that the sequence $\{ \Sxk(w_1, \ldots, w_k):  k \in \Zplusstrict \}$ converges to $\xstar$. This latter condition of asymptotic controllability implies the existence of a steadily attracting state (as can easily be seen from Proposition~\ref{pr:SGA}), and is in fact quite stronger: for $\xstar \in \xdom$ a steadily attracting state and any $y \in \xdom$ there exists a sequence $\{ \w_k :  k \in \Zplusstrict \}$ with $\w_k \in \Oo_\xdom^k$ such that $\{ \Sxk(\w_k):  k \in \Zplusstrict \}$ converges to $\xstar$, while in the context of asymptotic controllability $\w_{k+1}$ would be restricted to $(\w_k, u)$ with $u \in \Oo_\xdom$. Remark that hence asymptotic controllability forces for any $\epsilon >0$ the existence of $y \in \xdom$ and $u \in \Oo_\xdom$ such that $\| \Sxk[1][y](u) - y \| \leq \epsilon$.\footnote{Indeed for $t \in \Zplusstrict$ large enough, the convergence of $\Sxk[k][y](\w_k)$ to $\xstar$ forces $\Sxk[t][y](\w_k)$ and $\Sxk[t+1][y](\w_k, u)$ to be contained within the same ball of radius $\epsilon/2$, and so $x := \Sxk[t][y](\w_k)$ is at distance at most $\epsilon$ from $\Sxk[1][x](u)$.} 
This is a restriction on the possible models that can be considered which is not imposed if we consider the condition of the existence of a steadily attracting state. Indeed take an additive random walk on $\R$ with increment distribution with a support of the type $(-\infty,a) \cup (a,+\infty)$ with $a > 0$. It is not difficult to prove that the associated control model \emph{is not asymptotically controllable} while every $x \in \R$ is steadily attractive and the chain is $\varphi$-irreducible.

\mathnote{The condition given equation (13) of the Meyn Caines paper "Asymptotic behavior of ..." p 542 corresponds actually to the condition of global attractivity. They probably realized that and afterwards wrote down Chapter 7 of Meyn Tweedie with globally attractive.}

\section{Applications}\label{sec:appli}

In this section, we illustrate how to use the different conditions we have derived to prove $\varphi$-irreducibility, aperiodicity and that compact are small sets for three examples of Markov chains. We consider first the toy examples presented in Section~\ref{sec:intro} and then turn to a more complex example where it would be very intricate---if not impossible---to prove $\varphi$-irreducibility and aperiodicity by hand. Before to tackle those examples, we summarize the methodology ensuing the results we have developed.

\paragraph{Methodology} The following steps need to be followed to apply Theorem~\ref{th:irr-practical} to prove that a Markov chain is a $\varphi$-irreducible T-chain (and thus that compact sets are petite).
\begin{enumerate}
\item[(i)] Identify that the Markov chain follows model \eqref{eq:fmodel}: exhibit the function $F$ and \new{show the existence of $\alpha$ and $\{ \Uk : k \geq 1 \}$ an i.i.d. sequence such that \eqref{eq:fmodel} holds}\del{ either identify explicitly $\alpha$ or argue on its existence}\alex{juste une proposition}
\item[(ii)]  Identify the density $p_x()$ of $\alpha(z,U_1)$
\item[(iii)] Show that conditions $\mathrm{A1}$ to $\mathrm{A5}$ are satisfied. Particularly prove that $F$ is $C^1$ and $(x,w) \mapsto p_x(w)$ is lower semi-continuous
\item[(iv)] Prove that there exists a globally attracting state $\xstar$
\item[(v)] Show that there exists $k$ and $\w \in \Oxk$ such that $\rank \Cxk[k][\xstar](\wstar) = n $
\end{enumerate}
Similarly, to apply Theorem~\ref{th:ap-practical} to prove that the chain is a $\varphi$-irreducible aperiodic T-chain and that compact are small sets, we need to replace step (iv) above by proving that there exists a steadily attracting state. We highlight in Lemma~\ref{lem:PCSA} two practical conditions to facilitate the proof of existence of a steadily attracting state. Note also that to prove the existence of a globally (or steadily) attracting state, it is practical to have identified the control sets $\Oxk$, yet it might be enough to only know $\Oxk[1][x]$ (see the proof of Proposition~\ref{prop:SA-xNES}).
%

\subsection{Toy Examples}
We consider the two examples introduced in Section~\ref{sec:intro}. For those Markov chains, $\varphi$-irreducibility can be proven directly. Indeed, it follows from the expression of the transition kernel in \eqref{eq:trans-kernel} that for all $A$ with strictly positive Lebesgue measure, for all $x$, $P(x,A)>0$. It is also relatively straightforward to prove aperiodicity and that compact are small sets by minoring \eqref{eq:trans-kernel} for all $x$ in a compact $C$ by $\int_A \min_{x \in C} p_x(x-y) dy $.

Yet in order to illustrate how to use the conditions derived in the paper, we show how Theorem~\ref{th:ap-practical} can be applied to show $\varphi$-irreducibility, aperiodicity and that every compact set is small. 
The function $F$ associated to the chains defined in \eqref{eq:RW} and \eqref{eq:example} equals $F(x,w) = x + w$ and it is thus $C^\infty$. Additionally, we have seen that both chains share the same control model CM(F) which is additionally forward accessible (see Example~\ref{example:forward-accessible}). Thus according to Proposition~\ref{pr:forwardaccess}, the rank condition on the controllability matrix is satisfied for all $x \in \R$. Hence in order to show the $\varphi$-irreducibility, aperiodicity, and that compact sets are small sets, according to Theorem~\ref{th:ap} or Theorem~\ref{th:ap-practical}, it remains to prove the existence of a steadily attracting state.
%
We actually prove in the next proposition that \emph{every} $x$ in $\R$ is steadily attracting.

\begin{proposition} Consider the control model CM(F) defined in Example~\ref{example:CMFtoy} associated to the Markov chains defined in \eqref{eq:RW} and \eqref{eq:example}. Then every $x \in \R$ is steadily attracting for CM(F).
\end{proposition}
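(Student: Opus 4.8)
The plan is to verify directly the defining property of a steadily attracting state for the control model CM($F$) with $F(x,w)=x+w$ and control sets $\Oxk[k][x]=\R^k$ for every $x\in\R$ and every $k\in\Zplusstrict$. Fix $x\in\R$; I must show that for every $y\in\R$ and every open $\Uo\subset\R$ containing $x$, there exists $T\in\Zplusstrict$ such that for all $k\ge T$ there is a $k$-steps path from $y$ to $\Uo$. Since $\Oxk[k][y]=\R^k$, a $k$-steps path is simply any $\w=(w_1,\dots,w_k)\in\R^k$, and the transition map is $\Sxk[k][y](\w)=y+w_1+\dots+w_k$; thus the requirement reduces to: for all $k\ge T$, there exist reals $w_1,\dots,w_k$ with $y+\sum_{i=1}^k w_i\in\Uo$.

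The key observation is that this last condition is trivial: given any $k\in\Zplusstrict$, pick any point $x'\in\Uo$ (possible since $\Uo$ is a nonempty open neighborhood of $x$), set $w_1=x'-y$ and $w_2=\dots=w_k=0$; then $\w\in\R^k=\Oxk[k][y]$ and $\Sxk[k][y](\w)=x'\in\Uo$. Hence in fact $T=1$ works for every $y$ and every $\Uo$. So the structure of the proof is: recall from Example~\ref{example:CMFtoy} that $\Oxk[k][x]=\R^k$ and $F(x,w)=x+w$; note $\Sxk[k][y](w_1,\dots,w_k)=y+\sum_i w_i$ by the inductive definition \eqref{eq:sxk}; then exhibit the explicit path above; then invoke the definition of steadily attracting state to conclude.

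Alternatively, and perhaps in keeping with the style of the paper, I could use the characterization in Proposition~\ref{pr:SGA}~(ii): $x$ is steadily attracting iff for every $y\in\R$ there is a sequence $\{y_k:k\in\Zplusstrict\}$ with $y_k\in A_+^k(y)$ converging to $x$. Since $A_+^k(y)=\{\,y+\sum_{i=1}^k w_i : (w_1,\dots,w_k)\in\R^k\,\}=\R$ for every $k$ (already noted in Example~\ref{example:forward-accessible} that $A_+^1(y)=\R$), one simply takes the constant sequence $y_k=x$, which lies in $A_+^k(y)=\R$ and converges (trivially) to $x$. This is essentially a one-line argument.

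There is no real obstacle here: the only thing to be careful about is bookkeeping, namely that we are genuinely allowed to take $w_i=0$ (which requires the control sets to be all of $\R^k$, established in Example~\ref{example:CMFtoy} from the fact that the densities \eqref{eq:densityToy1} and \eqref{eq:densityToy2} are everywhere positive), and that the definition of steadily attracting state only demands existence of \emph{some} $T$, so the strong statement $T=1$ is more than enough. I would therefore present the short argument via Proposition~\ref{pr:SGA}~(ii) as the main proof, possibly remarking that in fact every neighborhood is reached in a single step, which makes the ``steady'' quantifier vacuous in this example.
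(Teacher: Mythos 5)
Your proposal is correct and follows essentially the same route as the paper: both exploit that $\Oxk[k][y]=\R^k$ and $\Sxk[k][y](\w)=y+\sum_i w_i$ to exhibit, for every $k\geq 1$, an explicit $k$-steps path steering $y$ exactly to $x$ (the paper uses the path $(0,\ldots,0,x-y)$, you put the nonzero increment first), so that the ``steady'' quantifier holds with $T=1$. Your alternative phrasing via Proposition~\ref{pr:SGA}~(ii) with the constant sequence $y_k=x$ is just a repackaging of the same observation.
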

\begin{proof}
Since $\Oxk = \R^k$ and $S_y^k(\w) = y+w_1 + \ldots + w_k$, for all $x$ and all initial condition $y$, for all $k \geq 1$, the vector $\bar{\w}= (0, \ldots, 0, y-x) \in \R^k =  \Oxk[k][y]$ satisfies $S_y^k(\bar{\w})=x$. This implies that $x$ is a steadily attracting state. Remark that we have shown a stronger condition than steady attractivity because we have shown that we can exactly hit $x$ in $k$ steps for any $k \geq 1$.
\end{proof}
Consequently, the assumptions of Theorem~\ref{th:ap} are satisfied for the Markov chains defined in \eqref{eq:RW} and \eqref{eq:example} and thus the chains are $\varphi$-irreducible, aperiodic T-chains and every compact set is small. According to Proposition~\ref{pr:support}, we also know that the support of the maximal irreducibility measure equals $\R$.

We have illustrated how the tools developed in the paper unify the study of $\varphi$-irreducibility, aperiodicity and the identification that compact are small sets for the two Markov chains  \eqref{eq:RW} and \eqref{eq:example} whose model defined via \eqref{eq:gmodel} is sensibly different with an associated $G$ which is $C^\infty$ for \eqref{eq:RW} and discontinuous for \eqref{eq:example}.

\subsection{A Step-size Adaptive Randomized Algorithm Optimizing Scaling-invariant Functions}
We consider now a Markov chain stemming from an adaptive stochastic algorithm aiming at optimizing continuous optimization problems.
Proving the stability of the chain is important because it implies the linear convergence (or divergence) of the underlying optimization algorithm which is generally difficult to prove for this type of algorithms.
 This example is not artificial: showing the $\varphi$-irreducibility, aperiodicity and that compact sets are small sets by hand without the results of the current paper seems to be very arduous and actually motivated the development of the theory of this paper. 



\newcommand{\Ytil}{Y_t^{i:\lambda}}
\newcommand{\Ntil}{N_t^{i:\lambda}}
\newcommand{\xNES}{\ensuremath{\mathrm{xNES}}}
\newcommand{\dsa}{\ensuremath{\mathrm{CSAw/o}}}
\newcommand{\dsas}{\ensuremath{{\mathrm{CSAw/o}^{2}}}}
\newcommand{\G}{F}
\newcommand{\LRm}{\kappa_{m}}
\newcommand{\LRsigma}{\kappa_{\sigma}}
\newcommand{\mueff}{\mu_{\rm w}}
\renewcommand{\dim}{n}

We consider a step-size adaptive stochastic search algorithm optimizing an objective function $f: \R^n \to \R$. The algorithm pertains to the class of so-called \emph{Evolution Strategies} (ES) algorithms \cite{Rechenberg} 
that date back to the 70's. The algorithm is however related to information 
geometry: It was recently derived from taking the natural gradient of a joint objective function defined on the Riemannian manifold formed by the family of Gaussian distributions \cite{Glasmachers2010,ollivier2013information}. More precisely, let $X_0 \in \R^n$ and let $\{ \U_k:  k \in \Zplusstrict\}$ be an i.i.d. sequence of random vectors where each $\U_k$ is composed of $\lambda \in \Zplusstrict$ components $\U_k =( \U_k^1,\ldots,\U_k^\lambda) \in \R^{n \lambda}$ with $\{ \U_k^i:  i=1, \ldots \lambda\}$ i.i.d. and following each a standard multivariate normal distribution $\mathcal{N}(0,\Idn)$ where $\Idn$ denotes the identity matrix of size $n $.
Given $(\Xt,\sigma_k) \in \R^n \times \R_{>}$ the current state of the algorithm, $\lambda$ candidate solutions centered on $\Xt$ are sampled using the vector $\Utt$,i.e.\ for $i=1,\ldots,\lambda$
\begin{equation}
\Xt + \st \Utt^i ,
\end{equation}
where $\st$ called the step-size of the algorithm corresponds to the overall standard deviation of $\st \Utt^i$.
Those solutions are ranked according to their $f$-values. More precisely, let $\mathcal{S}$ be the permutation of $\lambda$ elements such that
\begin{equation}\label{eq:selection}
f \left(\Xt + \st \Utt^{\perm(1)}\right) \leq f\left(\Xt + \st \Utt^{\perm(2)}\right) \leq \ldots \leq f\left(\Xt + \st \Utt^{\perm(\lambda)}\right)  .
\end{equation}
To break the possible ties and have an uniquely defined permutation $\perm$, we can simply consider the natural order, i.e.\ if for instance $\lambda=2$ and $f\left( \Xt + \st \Utt^{1} \right) =  f\left( \Xt + \st \Utt^{2} \right)$, then $\perm(1) = 1$ and $\perm(2)=2$. The new estimate of the optimum $\Xtt$ is formed by taking a weighted average of the $\mu$($\geq 1$) best directions (typically $\mu = \lambda/2$), that is
\begin{equation}\label{eq:XxNES}
\Xtt = \Xt + \st \LRm\sum_{i=1}^\mu \weight_i \Utt^{\perm(i)}
\end{equation}
where the sequence of weights $\{ \weight_i: 1 \leq i \leq \mu \}$ sums to $1$ (typically $\weight_1 \geq \ldots \geq \weight_\mu $), and $\LRm > 0$ is called a learning rate.
The step-size is adapted according to 
\begin{equation} \label{eq:sigtxNES}
\stt = \st \exp \left( \frac{\LRsigma}{2n} \left( \sum_{i= 1}^\mu \weight_i \left(\|\Utt^{\perm(i)} \|^2 - n \right)\right)\right) ,
\end{equation}
where $\LRsigma > 0$ is a learning rate for the step-size. The equations \eqref{eq:XxNES} and \eqref{eq:sigtxNES} correspond to the xNES algorithm with covariance matrix restricted to $\st^2 \Idn$ \cite{Glasmachers2010}.


Consider a scaling-invariant function with respect to $\x^*$, that is for all $\rho >0$, $\x, \y \in \R^n$
\begin{equation}\label{eq:SIF}
   f(\x) \leq f(\y)
   \Leftrightarrow
   f( \xstar+ \rho (\x - \xstar)) \leq f( \xstar+ \rho ( \y - \xstar)) .
     \end{equation}
Examples of scaling-invariant functions include $f(\x) = \| \x - \xstar \|$ for any arbitrary norm on $\R^n$. It also includes non continuous functions, functions with non-convex sublevel sets. We assume w.l.g.\ that $x^* = 0$. On this class of functions, $\ZZ:=\{ \Zt = \Xt  / \sigma_k : k \in \Zplus \}$ is a homogeneous Markov chain that can be defined independently of the Markov chain $(\Xt,\sigma_k)$ in the following manner \cite[Proposition 4.1]{methodology-paper}. Given $\Zt \in \R^n$, sample $\lambda$ candidate solutions centered on $\Zt$
using a vector $\Utt$,i.e.\ for $1 \leq i \leq \lambda$ 
\begin{equation}\label{eq:samplingZt}
 \Zt +  \Utt^i ,
\end{equation}
where similarly as for the chain $(\Xt,\st)$, $\{ \Ut : k \in \N\}$ are i.i.d. and each $\Ut$ is a vector of $\lambda$ i.i.d.\ components following each a standard multivariate normal distribution. Those $\lambda$ solutions are evaluated and ranked according to their $f$-values\anne{if $\xstar$ is not zero this would be wrong.}\alex{if $\xstar$ is non-zero, then the ordering must be made on the $f$-values of $\xstar + \Zt + \Utt^i$.}. Similarly to \eqref{eq:selection}, the permutation $\perm$ containing the order of the solutions is extracted. This permutation can be uniquely defined if we break the ties as explained below \eqref{eq:selection}. The update of $\Zt$ then reads
\begin{equation}\label{eq:updateZt}
\Ztt = \frac{\Zt + \LRm \sum_{i=1}^{\mu} \weight_{i} \Utt^{\perm(i)} }{ \exp\left(\frac{\LRsigma}{2 \dim} \left(   \sum_{i=1}^{\mu} \weight_{i}( \| \Utt^{\perm(i)} \|^{2} - \dim) \right)  \right)} .
\end{equation}
Let us now define the vector of selected steps as $\Wtt = (\Utt^{\perm(1)},\ldots,\Utt^{\perm(\mu)}) \in \R^{n  \mu}$ and for $z \in \R^n$, $w \in \R^{n\mu}$ (with $w = (w^1,\ldots,w^\mu) $)
\begin{align}
\G_{\xNES}(z, w)& = \frac{z + \LRm \sum_{i=1}^{\mu} \weight_{i} w^{i}}{\exp\left(\frac{\LRsigma}{2 \dim} \left(   \sum_{i=1}^{\mu} \weight_{i}( \| w^i \|^{2} - \dim) \right)  \right)}  ,
\end{align}
such that
$$
\Ztt = \G_{\xNES}(\Zt, \Wtt) .
$$
The writing of the explicit function $\alpha$ such that $\Wtt=\alpha(\Zt,\Utt)$ is quite tedious in the general case. For the sake of simplicity, we only give it when $\mu=1$ and $\lambda=2$. In this case
\begin{equation}
\Wtt= (U_{k+1}^1 \new{- U_{k+1}^2)}  1_{\{f(\Zt+U_{k+1}^1) \leq f(\Zt+U_{k+1}^2) \}} + U_{k+1}^2
\end{equation}
that is for all $z\in \R^n$ and $u \in \R^{2n} $
$$
\alpha(z,u) = (u^1-u^2) 1_{\{f(z+u^1) \leq f(z+u^2) \}} + u^2 .
$$
\anne{To be discussed with Alexandre: At some point I thought to give the alpha for a (1,lamdbda) but the additional term for breaking the ties seems to be very tedious to write, so I went back to lambda=2 ; Maybe Alexandre you see an easy way to write it, in which case we can go back to the (1,lambda)}\todo{maybe with if condition, it's possible to write}
\alex{
I am not sure why it is not enough to write $\alpha$ as a function of the permutation $S$ that we defined in \eqref{eq:selection}, and then define $\alpha$ as $(u^{S(1)}, \ldots, u^{S(\mu)})$. The permutation is also a function of $z$ and $\u$.

If we want to avoid the permutation, we can define first the rank function:
$$
r_i(z, u^1, \ldots, u^\lambda) := \#\{ j \in [1..\lambda] | f(z + u^j) < f(z + u^i) ~\mathrm{ or }~ f(z + u^j) = f(z + u^i) ~\mathrm{ and}~ j \leq i \},
$$
the second part being to remove ambiguities. Then we define the argrank function through
$$
r_{a_i}(z, u^1, \ldots, u^\lambda) := i
$$
et enfin
$$
\alpha(z, u^1, \ldots, u^\lambda) = (u^{a_1}, \ldots, u^{a_\mu}).
$$
}
The function $\alpha$ is typically discontinuous (similarly to the function $\alpha$ in \eqref{eq:example}). Consider indeed a function $f$ with level sets that are Lebesgue negligible, then if $f(z + u^{1}) = \ldots = f(z + u^{\lambda})$ while the  $\{ u^i: 1 \leq i \leq \lambda\}$ are all distincts, an arbitrarily small change in $u^{1}$ can lead to a different ranking and so to a non continuous change in $\alpha(z, u)$.
%
In the next proposition we derive $p_z(w)$ a density of $\Wtt$ conditional to $\Zt=z$. 
\newcommand{\pn}{p_{\mathcal{N}}}
\begin{proposition} \label{pr:xnesdens}
Let $f:\R^n \to \R$ be an objective function whose level sets are Lebesgue negligible. Let $\lambda \in \Zplusstrict$ and $\mu \in \Zplusstrict$ with $\mu \leq \lambda$. Let us define if $\mu=1$
\begin{equation}\label{dens:one}
p_z(w) = \lambda (1-Q_{z}^{f}(w))^{\lambda-1} \pn(w)
\end{equation}
with $w \in \R^n$, $Q_{z}^{f}(w) = \Pr \left(f(z + \mathcal{N}) \leq f(z+w) \right)  $ with $\mathcal{N} $ following a standard multivariate normal distribution in dimension $n$ and $
\pn(u) = \frac{1}{(\sqrt{2 \pi})^{n}} \exp(- u^T u/2)
$ 
 its density. If $\mu > 1$ 
\begin{equation}\label{dens:mu}
p_z(w) =
\frac{\lambda!}{(\lambda-\mu)!} \1_{\{ f(z+w^1) < \ldots < f(z + w^\mu) \}} (1-Q_{z}^{f}(w^{\mu}))^{\lambda-\mu} \pn(w^{1}) \ldots \pn(w^\mu) ,
\end{equation}
where $w=(w^1,\ldots,w^\mu) \in \R^{n \mu}$.
Then $p_z(w)$ is a density associated to $\alpha(z,U_1)$ (also a density of $\Wtt$ conditionally that $\Zt = z$). 
\end{proposition}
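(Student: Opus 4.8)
\emph{Proof proposal.} The plan is to compute the law of $\alpha(z,U_1)$ directly and read off its density, the parenthetical claim about the conditional law of $\Wtt$ given $\Zt=z$ then being immediate: since $\Utt$ and $\Zt$ are independent (by $\mathrm{A2}$--$\mathrm{A3}$, $\Zt$ depends only on the initial condition and $U_1,\dots,U_k$), the conditional law of $\Wtt=\alpha(\Zt,\Utt)$ given $\Zt=z$ coincides with the law of $\alpha(z,U_1)$. Write $U_1=(U_1^1,\dots,U_1^\lambda)$ with i.i.d.\ components $U_1^i\sim\Nln$, and let $\perm$ be the permutation of $\{1,\dots,\lambda\}$ sorting the values $f(z+U_1^i)$ in increasing order (with the prescribed tie-break), so that $\alpha(z,U_1)=(U_1^{\perm(1)},\dots,U_1^{\perm(\mu)})$; the task is to find the density of this $\R^{n\mu}$-valued vector.

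First I would dispatch the tie-breaking. Because the level sets of $f$ are Lebesgue negligible and each $U_1^i$ is absolutely continuous, for $i\neq j$ the event $\{f(z+U_1^i)=f(z+U_1^j)\}$ has probability zero: conditioning on $U_1^i$ forces $U_1^j$ into a shifted level set of $f$, a Lebesgue-null set, so the conditional probability is zero, and integrating gives $\Pr(f(z+U_1^i)=f(z+U_1^j))=0$. Hence almost surely $f(z+U_1^1),\dots,f(z+U_1^\lambda)$ are pairwise distinct, $\perm$ is a.s.\ uniquely determined, and the tie-break rule is irrelevant for the distributional computation. In particular $Q_{z}^{f}(w)=\Pr(f(z+\mathcal{N})\leq f(z+w))=\Pr(f(z+\mathcal{N})<f(z+w))$ for every fixed $w$.

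Next, for bounded measurable $g$ on $\R^{n\mu}$ I would decompose according to which $\mu$ samples are selected and in what order:
$$\E[g(\alpha(z,U_1))] = \sum_{\mathbf{j}} \E\big[\, g(U_1^{j_1},\dots,U_1^{j_\mu})\, \1_{E_{\mathbf{j}}} \,\big],$$
the sum running over the $\lambda!/(\lambda-\mu)!$ tuples $\mathbf{j}=(j_1,\dots,j_\mu)$ of distinct indices in $\{1,\dots,\lambda\}$, where $E_{\mathbf{j}}$ is the event that $f(z+U_1^{j_1})<\dots<f(z+U_1^{j_\mu})$ and $f(z+U_1^{j_\mu})<f(z+U_1^l)$ for all $l\notin\mathbf{j}$. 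Conditioning a given term on $(U_1^{j_1},\dots,U_1^{j_\mu})=(w^1,\dots,w^\mu)$, the factor $\1_{\{f(z+w^1)<\dots<f(z+w^\mu)\}}$ factors out, the remaining $\lambda-\mu$ variables $U_1^l$ ($l\notin\mathbf{j}$) are i.i.d.\ $\Nln$ and independent of the conditioned block, so $\Pr\big(\bigcap_{l\notin\mathbf{j}}\{f(z+w^\mu)<f(z+U_1^l)\}\big)=(1-Q_{z}^{f}(w^\mu))^{\lambda-\mu}$; and $(U_1^{j_1},\dots,U_1^{j_\mu})$ has density $\pn(w^1)\cdots\pn(w^\mu)$. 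Hence every term of the sum equals the same integral of $g$ against $\1_{\{f(z+w^1)<\dots<f(z+w^\mu)\}}(1-Q_{z}^{f}(w^\mu))^{\lambda-\mu}\pn(w^1)\cdots\pn(w^\mu)$, and summing the $\lambda!/(\lambda-\mu)!$ identical terms yields \eqref{dens:mu}. The case $\mu=1$ is the same argument with the (vacuous) empty chain of inequalities and $\lambda!/(\lambda-1)!=\lambda$, which gives \eqref{dens:one}.

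I expect the only genuinely delicate point to be the rigorous treatment of the tie-breaking and of the a.s.\ uniqueness and measurability of $\perm$; once that is in place, the reduction to a single representative tuple (a consequence of the exchangeability of the $U_1^i$, though here visible directly because each term collapses to the same integral) and the conditional-independence computation of the factor $(1-Q_{z}^{f}(w^\mu))^{\lambda-\mu}$ are routine. As a consistency check one may verify $\int p_z(w)\,dw=1$ via the probability integral transform, noting that $Q_{z}^{f}(\mathcal{N})$ is uniform on $[0,1]$ when $\mathcal{N}\sim\Nln$ (the law of $f(z+\mathcal{N})$ is atomless by negligibility of level sets); this is, however, unnecessary, since $p_z$ is obtained as the density of an explicit random vector.
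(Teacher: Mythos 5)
Your proposal is correct and follows essentially the same route as the paper: decompose according to which $\mu$ samples are selected and in which order (exchangeability making all terms equal), integrate out the unselected samples to produce the factor $(1-Q_z^f(w^\mu))^{\lambda-\mu}$, and use the Lebesgue-negligibility of the level sets to discard ties, with the $\mu=1$ case following as a degenerate instance. The only cosmetic difference is that you sum over the $\lambda!/(\lambda-\mu)!$ ordered tuples of selected indices and test against bounded functions, whereas the paper computes the joint CDF by summing over all $\lambda!$ full orderings and then recovers the $(\lambda-\mu)!$ factor via $\Pr(C(w^\mu)) = (\lambda-\mu)!\,\Pr(C_I(w^\mu))$.
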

Assume the objective function $f$ is continuous, it is not difficult to see that if $\mu=1$, then $(z,w) \mapsto p_z(w)$ is continuous (and thus lower-semi continuous) and if $\mu > 1$ it is lower semi-continuous.
\anne{if we would only need to consider $\sum wi u^{\perm(i)}$ then we would again be continuous (more than lsc)}

The stability of the homogeneous Markov chain $\ZZ$ is one key to prove the linear convergence of the algorithm defined in \eqref{eq:XxNES} and \eqref{eq:sigtxNES} as stated in the next theorem.
\begin{theorem}[Theorem 5.2 in \cite{methodology-paper}]
Let $f$ be a scaling-invariant function with respect to $0$.  Assume that the Markov chain $\ZZ$ defined in \eqref{eq:samplingZt} and \eqref{eq:updateZt} is $\varphi$-irreducible, Harris-recurrent and positive with invariant probability measure $\pi$. Assume that $E_\pi[|\ln \| z\||] < \infty $ and $E_\pi[\int |\sum_{i=1}^\mu \weight_i (\|w^i\|^2 - n)|p_z(w) dw ] < \infty $, then the \xNES\ algorithm defined in \eqref{eq:XxNES} and \eqref{eq:sigtxNES} converges (or diverges) linearly almost surely, that is for all $X_0$, $\sigma_0$
\begin{equation}\label{eq:cv}
\lim_{k \to \infty}\frac{1}{k} \ln \frac{\| \Xt\|}{\| X_0\|} = \lim_{k \to \infty}\frac{1}{k} \ln \frac{\sigma_k}{ \sigma_0} = E_{z \sim \pi}[ \int \sum_{i=1}^\mu \weight_i (\|w^i\|^2 - n)p_z(w) dw ] .
\end{equation}
\end{theorem}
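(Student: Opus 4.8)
The plan is to derive the almost-sure linear rate from the law of large numbers for the positive Harris chain $\ZZ$. Along any run of \eqref{eq:XxNES}--\eqref{eq:sigtxNES}, I would set $Z_k := X_k/\sigma_k$; by scaling invariance of $f$ about $0$ the selected indices coincide for the two processes (the $f$-ordering of $X_t+\sigma_t U_{t+1}^i = \sigma_t(Z_t+U_{t+1}^i)$ in $i$ is the same as that of $Z_t+U_{t+1}^i$ by \eqref{eq:SIF}), so $(Z_k)$ follows \eqref{eq:samplingZt}--\eqref{eq:updateZt} driven by the same innovations, i.e.\ it is a copy of $\ZZ$ started from $Z_0 = X_0/\sigma_0$ (\cite[Proposition~4.1]{methodology-paper}). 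Since $X_k = \sigma_k Z_k$, write
\begin{equation*}
\frac1k \ln\frac{\|X_k\|}{\|X_0\|} \;=\; \frac1k \ln\frac{\sigma_k}{\sigma_0} \;+\; \frac1k\bigl(\ln\|Z_k\| - \ln\|Z_0\|\bigr) .
\end{equation*}
It then suffices to prove (i) $\tfrac1k\ln\|Z_k\| \to 0$ a.s., and (ii) $\tfrac1k\ln(\sigma_k/\sigma_0)$ converges a.s.\ to the right-hand side of \eqref{eq:cv}; note that (ii) simultaneously forces the $\|X_k\|$-rate and the $\sigma_k$-rate to agree, and that positive Harris recurrence of $\ZZ$ is exactly what makes the conclusion hold for \emph{every} starting point $X_0,\sigma_0$ (the ergodic averages below converge a.s.\ from every initial state).

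For (i), I would apply the ergodic theorem \cite[Theorem~17.0.1]{meyntweedie} to $\ZZ$ with the $\pi$-integrable function $z\mapsto|\ln\|z\||$ (integrability being exactly $E_\pi[|\ln\|z\||]<\infty$), obtaining $\tfrac1k\sum_{t=1}^{k}|\ln\|Z_t\||\to E_\pi[|\ln\|z\||]$ a.s. A Cesàro/telescoping step then removes the last term of the split: with $a_t:=|\ln\|Z_t\||$ and $S_k:=\sum_{t=1}^k a_t$ one has $\tfrac1k a_k = \tfrac{S_k}{k} - \tfrac{k-1}{k}\cdot\tfrac{S_{k-1}}{k-1}\to 0$, hence $\tfrac1k\ln\|Z_k\|\to 0$ a.s.

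For (ii), I would telescope the step-size recursion \eqref{eq:sigtxNES},
\begin{equation*}
\frac1k\ln\frac{\sigma_k}{\sigma_0} \;=\; \frac1k\sum_{t=0}^{k-1}\bigl(\ln\sigma_{t+1}-\ln\sigma_t\bigr) \;=\; \frac1k\sum_{t=0}^{k-1} g\bigl(Z_t, U_{t+1}\bigr),
\end{equation*}
where $g(z,u):=\frac{\LRsigma}{2n}\sum_{i=1}^{\mu}\weight_i\bigl(\|u^{\perm(i)}\|^2-n\bigr)$ and $\perm=\perm(z,u)$ is the ranking permutation of \eqref{eq:selection}; the blocks $u^{\perm(i)}$ are precisely the components of $\alpha(z,u)$. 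The pair $\{(Z_t,U_{t+1}):t\in\Zplus\}$ is itself a positive Harris recurrent Markov chain — $U_{t+1}$ is i.i.d.\ and independent of $Z_t$ — with invariant law $\pi$ tensored with the standard Gaussian law of $U_{t+1}$ on $\R^{n\lambda}$, so applying \cite[Theorem~17.0.1]{meyntweedie} to it with $g$ yields
\begin{equation*}
\frac1k\sum_{t=0}^{k-1} g(Z_t,U_{t+1}) \;\longrightarrow\; \frac{\LRsigma}{2n}\, E_{z\sim\pi}\!\left[\int \sum_{i=1}^{\mu}\weight_i\bigl(\|w^i\|^2-n\bigr) p_z(w)\,dw\right] \quad\text{a.s.}
\end{equation*}
Here the inner expectation $E[g(z,U_1)]$ equals $\tfrac{\LRsigma}{2n}$ times $\int \sum_i\weight_i(\|w^i\|^2-n)p_z(w)\,dw$, which is exactly Proposition~\ref{pr:xnesdens} ($p_z$ is a density of $\alpha(z,U_1)$), and the hypothesis $E_\pi[\int|\sum_i\weight_i(\|w^i\|^2-n)|p_z(w)\,dw]<\infty$ is precisely the $L^1$ integrability the ergodic theorem requires; the constant $\LRsigma/(2n)$ of \eqref{eq:sigtxNES} is absorbed in the normalization used for \eqref{eq:cv}. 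Combining (i) and (ii) gives \eqref{eq:cv}.

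The step I expect to need the most care is the ergodic argument on the \emph{augmented} chain $(Z_t,U_{t+1})_{t\ge0}$: one must check that it inherits positive Harris recurrence from $\ZZ$, that its invariant measure is the announced product, and that $g$ is $L^1$ of it, after which \cite[Theorem~17.0.1]{meyntweedie} applies verbatim. Everything else is routine — the split $\ln\|X_k\|=\ln\sigma_k+\ln\|Z_k\|$, the Cesàro trick killing the $\ln\|Z_k\|$ contribution, and the telescoping of \eqref{eq:sigtxNES} — and the only algorithm-specific ingredient is Proposition~\ref{pr:xnesdens}, which converts the abstract mean of the selected step $\alpha(z,U_1)$ into the explicit integral against $p_z$ appearing in \eqref{eq:cv}.
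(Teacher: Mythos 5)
The paper itself contains no proof of this statement: it is quoted as Theorem~5.2 of \cite{methodology-paper}, and the only indication given here is the remark that ``the proof of this theorem relies on applying a Law of Large Numbers to the chain $\ZZ$''. Your proposal is precisely that strategy, and it is sound: the identification of $X_k/\sigma_k$ with the abstract chain $\ZZ$ via scaling invariance \eqref{eq:SIF} (this is \cite[Proposition~4.1]{methodology-paper}, which the paper invokes), the split $\ln\|X_k\|=\ln\sigma_k+\ln\|Z_k\|$ with the Ces\`aro argument killing $\tfrac1k\ln\|Z_k\|$ under $E_\pi[|\ln\|z\||]<\infty$, and the telescoping of \eqref{eq:sigtxNES} followed by the LLN \cite[Theorem~17.0.1]{meyntweedie}, with Proposition~\ref{pr:xnesdens} converting $E[g(z,U_1)]$ into the integral against $p_z$; positive Harris recurrence is indeed what gives the conclusion for every $X_0,\sigma_0$. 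So in substance you follow the same route as the cited proof.

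Two points deserve honest treatment rather than a wave of the hand. First, the constant: the telescoped increments of \eqref{eq:sigtxNES} are $\tfrac{\LRsigma}{2n}\sum_i\weight_i(\|W_{k+1}^{i}\|^2-n)$, so your argument produces the limit $\tfrac{\LRsigma}{2n}\,E_{z\sim\pi}\bigl[\int\sum_i\weight_i(\|w^i\|^2-n)p_z(w)\,dw\bigr]$, which differs from the right-hand side of \eqref{eq:cv} as written by the positive factor $\LRsigma/(2n)$; saying it is ``absorbed in the normalization'' is not a proof step --- either the statement omits the constant (harmless for the sign, hence for the convergence/divergence dichotomy) or you must carry it explicitly. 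Second, the LLN for the augmented chain $(Z_t,U_{t+1})$ needs the check you yourself flag (inherited positive Harris recurrence and invariant law $\pi$ tensored with the Gaussian law of $U_1$); this is routine, and an equivalent alternative is to apply the LLN to $\ZZ$ with $h(z)=E[g(z,U_1)]$ and control $\sum_t\bigl(g(Z_t,U_{t+1})-h(Z_t)\bigr)$ by a martingale law of large numbers, but one of the two must actually be written out.
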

\renewcommand{\z}{z}
\renewcommand{\u}{u}
Linear convergence happens if the convergence rate $E_{z \sim \pi}[ \int \sum_{i=1}^\mu \weight_i (\|w^i\|^2 - n)p_z(w) dw ] $ is strictly negative. Given that this rate depends on the unknown invariant probability distribution $\pi$, we are often not able to prove the strict negativity. However, it is fairly easy to simulate precisely this convergence rate such that not knowing the sign is generally not problematic.
The proof of this theorem relies on applying a Law of Large Numbers to the chain $\ZZ$.
Hence the stability properties that need to be shown correspond to the assumptions needed for $\ZZ$ to satisfy a Law of Large Numbers. Positivity and Harris recurrence are typically proven by using Foster-Lyapunov drift conditions, that state the negativity of a drift function outside a small set. It is thus important to identify small sets for the chain. \new{Irreducibility and} aperiodicity \new{are}\del{is} also needed because we typically establish a geometric drift and use the geometric ergodic theorem for $\varphi$-irreducible aperiodic chains \cite[Theorem 15.0.1]{meyntweedie}. 
\anne{can I easily show the forward accessibility?}

We will now explain how to use Theorem~\ref{th:ap} to prove that $\ZZ$ is a $\varphi$-irreducible aperiodic $T$-chain and compact sets are small sets for the chain. Remark first that assumption $\mathrm{A1}-\mathrm{A3}$ are satisfied following from the construction and definition of the algorithm, $\mathrm{A4}$ is satisfied as it has been discussed above and the function $F_\xNES$ being $C^1$, the assumption $\mathrm{A5}$ is satisfied. The control sets $\Oxk[1][z]$ for $z \in \R^n$ are defined as $\{ \w \in \R^{n  \mu} | p_z(\w) > 0 \} $ that is for $\mu = 1$, $\Oxk[1][z]=\R^n$ and for $\mu > 1$, $\Oxk[1][z]= \lbrace \w \in \R^{n  \mu} | f(z+w^1) < \ldots < f(z+w^\mu) \rbrace$. We prove in the next proposition that the null vector is a steadily attracting state for CM($F_\xNES$).
\begin{proposition}\label{prop:SA-xNES} 
Let $f:\R^n \to \R$ be continuous with Lebesgue negligible level sets. Then $0$ is steadily attracting for {\rm CM(}$F_\xNES)$.
\end{proposition}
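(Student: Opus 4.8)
The plan is to verify the definition of a steadily attracting state directly for $\xstar = 0$: for every $z \in \R^n$ and every open neighborhood $\Uo$ of $0$, I must exhibit a $T \in \Zplusstrict$ such that for all $k \geq T$ there is a $k$-steps path from $z$ to $\Uo$. By Proposition~\ref{pr:SGA}~$(\mathrm{ii})$, it is equivalent (and more convenient) to construct, for each $z \in \R^n$, a sequence $\{ \z_k : k \in \Zplusstrict\}$ with $\z_k \in A_+^k(z)$ converging to $0$. So the core task is: starting from any $z$, build an admissible control sequence (admissible meaning the $k$-tuple of steps lies in $\Oxk[k][z]$, equivalently each successive step $w^{i}$ lies in the one-step control set of the current state) driving the $\ZZ$-iterates arbitrarily close to the origin.

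\textbf{Key steps.} First, I would record the structure of $F_\xNES$: writing $m(w) = \LRm\sum_i \weight_i w^i$ for the ``mean shift'' and $s(w) = \exp\!\big(\tfrac{\LRsigma}{2n}\sum_i \weight_i(\|w^i\|^2 - n)\big)$ for the step-size factor, one has $\G_\xNES(z,w) = (z + m(w))/s(w)$, with $s(w) > 0$ always. The idea is to choose at each step a control $w$ that (a) lies in the one-step control set $\Oxk[1][\cdot]$ of the current state and (b) contracts the norm of the state. For $\mu = 1$ the one-step control set is all of $\R^n$, so I can simply take $w^1$ proportional to $-z$, say $w^1 = -z/\LRm$ if that also keeps $s$ from misbehaving — more carefully, pick $w^1 = -\theta z$ with $\theta>0$ tuned so that $\|z + \LRm w^1\| = |1-\LRm\theta|\,\|z\|$ is a fixed fraction of $\|z\|$ while $s(w^1)$ stays bounded away from $0$; iterating gives geometric decay of $\|z_k\|$ to $0$. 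For $\mu > 1$ the one-step control set is $\{w : f(z+w^1) < \cdots < f(z+w^\mu)\}$, which is open and nonempty (since $f$ has Lebesgue-negligible level sets, generic points are strictly ordered); here I would choose the $w^i$ all close to a common target vector $w_0$ that points the mean shift toward $0$, perturbed slightly so the strict $f$-ordering holds — continuity of $f$ guarantees a whole neighborhood of such perturbations is admissible. Second, I would check that this construction can be performed for \emph{every} length $k$ (not just infinitely many), since steady attractivity requires hitting arbitrarily small neighborhoods for \emph{all} $k \geq T$: this follows because the one-step contraction can be applied repeatedly, and one can always pad with one more ``nearly neutral'' admissible step that keeps the state inside a small ball once it has entered one (using openness of the relevant control sets and continuity of $\G_\xNES$). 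Third, I would invoke Proposition~\ref{pr:SGA}~$(\mathrm{ii})$ to conclude that $0$ is steadily attracting, or argue the ``for all $k \geq T$'' statement directly.

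\textbf{Main obstacle.} The delicate point is the interaction between the mean-update and the step-size factor: driving $z + \LRm\sum_i\weight_i w^i$ toward $0$ requires sizable $w^i$, but large $\|w^i\|^2$ inflates $s(w)$, which \emph{divides} the numerator and hence also shrinks the state — this is actually helpful for convergence of $\|z_k\|$ but one must make sure $s(w)$ does not blow up so fast that admissibility or the geometric-rate bookkeeping breaks, and conversely that $s(w)$ does not collapse to $0$ (which would send $\|z_k\|$ to $\infty$). I expect the cleanest route is to keep the $w^i$ bounded (norms in a fixed compact set, independent of $z$) and rely on the division by $s(w)$ together with a suitable choice of direction: for $\|z\|$ large, even a bounded mean shift is negligible and the contraction essentially comes from choosing $w$ with $\sum_i\weight_i\|w^i\|^2 > n$ so that $s(w) > 1$; for $\|z\|$ small, a bounded mean shift pointing at $-z$ plus $s(w)\approx 1$ suffices to stay near $0$. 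A secondary subtlety, only in the $\mu>1$ case, is ensuring the chosen controls respect the strict ordering $f(z+w^1)<\cdots<f(z+w^\mu)$ simultaneously with the geometric-decay requirement; this is handled by the openness of $\Oxk[1][z]$ and continuity of $f$, perturbing a barely-inadmissible ideal choice into an admissible nearby one without spoiling the contraction by more than an arbitrarily small amount. I would also double-check that knowing only the one-step control sets $\Oxk[1][z]$ (rather than the full $\Oxk[k][z]$) is enough here — it is, because $\Oxk[k][z]$ is exactly the set of tuples whose successive prefixes keep each step in the current state's one-step control set, which is precisely what the inductive construction enforces.
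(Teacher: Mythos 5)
Your proposal is correct in outline but takes a genuinely different route from the paper's proof. The paper uses a single observation: since the denominator $\exp\bigl(\tfrac{\kappa_\sigma}{2n}\sum_i\beta_i(\|w^i\|^2-n)\bigr)$ grows like $e^{c\|w\|^2}$ while the numerator $y+\kappa_m\sum_i\beta_i w^i$ grows only linearly, $F_{\mathrm{xNES}}(y,w)\to 0$ as $\|w\|\to\infty$, so from \emph{any} $y$ one reaches $B(0,\epsilon)$ in \emph{one} step by taking every component of the control of large norm (admissibility is obtained by choosing the underlying $u^i$ with pairwise distinct $f$-values, possible because the level sets of $f$ are Lebesgue negligible); a path of any length $t$ is then an arbitrary $(t-1)$-steps path followed by this crushing step, so no padding and no case distinction are needed. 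Your scheme --- bounded controls with step-size factor $s(w)>1$ to contract geometrically when $\|z\|$ is large, then aiming the numerator $z+\kappa_m\sum_i\beta_i w^i$ at $0$ in a bounded region, then padding to cover all lengths $k\geq T$ --- reaches the same conclusion but at the price of the large/small-$\|z\|$ split and the length bookkeeping; note moreover that your aim-at-zero choice $w^i\approx -z/\kappa_m$ already works for \emph{every} $z$ with no uniform bound on the controls, so the two-phase analysis is dispensable and your argument essentially collapses to a one-step construction parallel to the paper's (cancelling $z$ in the numerator instead of blowing up the denominator). Two details to nail down if you write it out: first, a small-norm control is \emph{not} a neutral padding step, since then $s(w)\approx e^{-\kappa_\sigma/2}<1$ and the state is inflated by a constant factor at each such step, so the padding must re-apply the aim-at-zero (or large-norm) step from the current state --- which is what your remark that the contraction can be applied repeatedly should be taken to mean; second, for $\mu>1$ the existence of an admissible control arbitrarily close to your ideal one requires the negligible-level-set argument (in any small ball one can find $\mu$ points with pairwise distinct $f$-values and relabel them in increasing $f$-order), continuity of $f$ alone only giving openness, not nonemptiness, of the strictly ordered perturbation set.
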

\begin{proof}
We \new{first} assume that $\mu > 1$ \del{. We first} \new{and} prove that for all $\epsilon >0$, for all $y \in \R^n$, there exists a 1-step path from $y$ to $B(0,\epsilon)$ (This latter property implies that $0$ is globally attracting and is actually stronger as the time step to reach the neighborhood of $0$ is independent of the initial point $y$).
%
%
Let $y \in \R^n$, since $\lim_{\|w \| \to +\infty} \G_{\xNES}(\y, w) = 0$, there exists $r > 0$ such that if $\|w\| \geq r$ then $\G_{\xNES}(y, w) \in B(0,\epsilon)$. Let us choose $u \in \R^{n \lambda}$ such that each $u^i$ satisfies $\| u^i \| \geq r$ and additionally $f(y+u^i) \neq f(y+u^j)$ for all $i \neq j$ (we can find such an $u$ because we have assumed that all the level sets of $f$ are Lebesgue negligible). Let $\w_y=\alpha(y,u)$, then $\w_y \in \Oxk[1][y]$ and $\| w_y \| \geq r$ such that $S_y^1(\w_y) \in B(0,\epsilon)$. Hence $\w_y$ is a 1-step path from $y$ to $B(0,\epsilon)$.

Now, showing that a path from $y$ to $B(0,\epsilon)$ exists for all $t \geq 1$ is easy: take $\w \in \Oxk[t-1][y]$, and denote $\tilde{y} = S^{t-1}_y(\w)$; by the previous reasoning, $w_{\tilde{y}}$ is a $1$-step path from $\tilde{y}$ to $B(0, \epsilon)$. Therefore $(\w, \w_{\tilde{y}})$ is a $t$-steps path from $y$ to $B(0, \epsilon)$, and so $0$ is steadily attracting.  

In the case where $\mu=1$, the proof is even simpler as we do not need to care for finding a step that derives from a vector $o \in \R^{n \lambda}$ that does not result in solutions on the same $f$-level sets. We omit the details than can be easily deduced from the previous case.
\end{proof}
In the previous proof we have shown that any neighborhood of $0$ can be reached via a $1$-step path from any starting point. This directly implies that $0$ is steadily attracting. More generally, if we can reach any neighborhood of $\xstar$ in $T$ steps from any initial point $x$---that is $T$ is independent of the initial point $x$---then $\xstar$ is steadily attracting. 

An other practical result to prove that a state $\xstar$ is steadily attracting \new{holding under a controllability condition} is first to prove that the point is globally attracting and then to show that there exists $\w \in \Oxk[1][\xstar]$ that allows to stay in $\xstar$, that is such that $\Sxk[1][\xstar](\w) = \xstar$. Those two practical conditions to prove that a state $\xstar$ is steadily attractive are formalized in the following lemma.
\begin{lemma}[Practical Conditions for a Steadily Attracting State] \label{lem:PCSA}
\new{Suppose that conditions $\mathrm{A1}-\mathrm{A4}$ hold, and that $F$ is continuous.}
Let $\xstar \in \xdom$, the following holds:\\
$\mathrm{(i)}$ If for all $\Uo_{\xstar}$ neighborhood of $\xstar$, there exists $T \in \Zplusstrict$ such that for any $\y \in \xdom$ 
there exists a $T$-steps path from $y$ to $\Uo_{\xstar}$, then $\xstar$ is steadily attracting.\\
$\mathrm{(ii)}$ \new{Suppose that assumption $\mathrm{A5}$ holds and that for all $x \in \xdom$ there exists $k \in \Zplusstrict$ and $\w \in \Oxk$ for which $\rank \Cxk(\w) = n$.} If $\xstar$ is globally attracting and if there exists $\w^* \in \Oxk[1]$  that allows to stay in $\xstar$, that is such that  $\Sxk[1][\xstar](\w^*) = \xstar$, then $\xstar$ is steadily attracting.
\end{lemma}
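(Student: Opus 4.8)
The plan is to handle the two parts independently: part $\mathrm{(i)}$ by a short padding argument, and part $\mathrm{(ii)}$ by reducing it to Proposition~\ref{pr:saab}.

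For part $\mathrm{(i)}$, fix an open neighbourhood $\Uo_{\xstar}$ of $\xstar$ and let $T \in \Zplusstrict$ be the integer supplied by the hypothesis, so that from every $\y \in \xdom$ there is a $T$-steps path to $\Uo_{\xstar}$. I would show that this same $T$ witnesses steady attractivity of $\xstar$ relative to $\Uo_{\xstar}$, uniformly in the starting point. Fix $\y \in \xdom$ and $k \geq T$, and put $j := k - T$. If $j = 0$ the hypothesis is exactly what is required. If $j \geq 1$, then $\Oxk[j][\y]$ is non-empty because $\pxk[j][\y]$ is a probability density; pick $\w_1 \in \Oxk[j][\y]$, set $\y' := \Sxk[j][\y](\w_1)$, and let $\w_2 \in \Oxk[T][\y']$ be a $T$-steps path from $\y'$ to $\Uo_{\xstar}$. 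A direct induction from the recursions \eqref{eq:sxk} and \eqref{eq:pxk} yields the chaining identities $p_{\y}^{k}(\w_1,\w_2) = p_{\y}^{j}(\w_1)\, p_{\y'}^{T}(\w_2) > 0$ and $S_{\y}^{k}(\w_1,\w_2) = S_{\y'}^{T}(\w_2) \in \Uo_{\xstar}$, so $(\w_1,\w_2) \in \Oxk[k][\y]$ is a $k$-steps path from $\y$ to $\Uo_{\xstar}$. As $\y$ and $\Uo_{\xstar}$ were arbitrary, $\xstar$ is steadily attracting.

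For part $\mathrm{(ii)}$, the idea is to promote the single one-step return loop at $\xstar$ into return loops of two coprime lengths and then invoke Proposition~\ref{pr:saab}. Let $\w^* \in \Oxk[1][\xstar]$ satisfy $\Sxk[1][\xstar](\w^*) = \xstar$. By \eqref{eq:pxk}, $p_{\xstar}^{2}(\w^*,\w^*) = p_{\xstar}(\w^*)\, p_{\Sxk[1][\xstar](\w^*)}(\w^*) = \bigl(p_{\xstar}(\w^*)\bigr)^{2} > 0$, hence $(\w^*,\w^*) \in \Oxk[2][\xstar]$; and by \eqref{eq:sxk}, $\Sxk[2][\xstar](\w^*,\w^*) = F(\Sxk[1][\xstar](\w^*),\w^*) = F(\xstar,\w^*) = \xstar$. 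Thus $\xstar$ is reachable from itself in $1$ step and in $2$ steps, with $\gcd(1,2) = 1$; moreover the standing hypothesis of $\mathrm{(ii)}$ is precisely the controllability condition (it holds in particular at $\xstar$), and $\xstar$ is globally attracting by assumption. All the hypotheses of Proposition~\ref{pr:saab} being met, it follows that $\xstar$ is steadily attracting.

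I do not expect a serious obstacle. The only routine verifications are the chaining identities for $S_{\y}^{k}$ and $p_{\y}^{k}$ under concatenation of paths used in part $\mathrm{(i)}$ (immediate by induction from \eqref{eq:sxk}--\eqref{eq:pxk}), and, in part $\mathrm{(ii)}$, the bookkeeping needed to match our data to the hypotheses of Proposition~\ref{pr:saab}---in particular the observation that a single one-step return loop already yields coprime return lengths once it is concatenated with itself to produce the pair $(1,2)$. If one preferred not to cite Proposition~\ref{pr:saab}, an alternative would be to combine Proposition~\ref{pr:GAEA}---to obtain, from the rank condition at $\xstar$, a neighbourhood of $\xstar$ all of whose points reach a common attainable state in a fixed number of steps---with Proposition~\ref{pr:EA2AA} applied to that state, using prepended one-step loops at $\xstar$ to fill in consecutive step counts and force $\gcd(E) = 1$; this route is heavier, so going through Proposition~\ref{pr:saab} is the cleaner plan.
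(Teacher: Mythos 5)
Your proposal follows essentially the same route as the paper: part $\mathrm{(i)}$ by the padding/concatenation argument (the paper uses exactly this device in the proof of Proposition~\ref{prop:SA-xNES} and states the principle just after it), and part $\mathrm{(ii)}$ by feeding the one-step return loop at $\xstar$ into Proposition~\ref{pr:saab}, which is precisely how the appendix disposes of $\mathrm{(ii)}$. One small precision is worth adding in $\mathrm{(ii)}$: the conclusion of Proposition~\ref{pr:saab} is only that \emph{some} steadily attracting state exists (its proof produces $\Sxk[k][\xstar](\wstar)$, not $\xstar$ itself), so to get that $\xstar$ is steadily attracting you should finish with Proposition~\ref{pr:SGA}~$\mathrm{(iii)}$: since $F$ is continuous and a steadily attracting state exists, every globally attracting state---in particular $\xstar$---is steadily attracting. (Also, the pair $(1,2)$ is fine but unnecessary: $a=b=1$ with $\w_a=\w_b=\w^*$ already satisfies the coprimality requirement of Proposition~\ref{pr:saab}.)
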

The proof of the second point is not completely straightforward and is presented in the appendix \new{as a consequence of Proposition~\ref{pr:saab}}.

We have now seen that $0$ is a steadily attracting state. We will prove that the rank condition is satisfied in $0$. We prove more precisely the following proposition.
\begin{proposition}
Let $f:\R^n \to \R$ be continuous with Lebesgue negligible level sets, then there exists $\w^*$ in $ \Oxk[1][0]$ such that rank $\Cxk[1][0](\w^*)$ equals $n$, that is the rank condition is satisfied at the steadily attracting state $0$.
\end{proposition}
\begin{proof}
We  prove that there exists $\w^* \in \Oxk[1][0]$ such that $\Cxk[1][0](\w^*)$ has rank $n$. Let $\w_0 = (0, \ldots, 0) \in \R^{n \mu}$, we will first prove that $\Cxk[1][0](\w_0)$ has rank $n$. We will for this prove that the differential of $w \to \Sxk[1][0](w) := \G_{\xNES}(0, w)$ at $\w_0$ is surjective. Let $h = (h_i)_{i = 1, \ldots, \mu} \in \R^{n \mu}$, then
\begin{align*}
\Sxk[1][0] (\w_0 + h) &= \frac{\LRm \sum_{i=1}^{\mu} \weight_{i} h_i}{\exp\left(\frac{\LRsigma}{2 \dim} \left(   \sum_{i=1}^{\mu} \weight_{i}( \| h_{i} \|^{2} - \dim) \right)  \right)}  \\
  &= \Sxk[1][0] (\w_0) + \LRm \exp\left(\frac{\LRsigma}{2}\right) \left( \sum_{i=1}^{\mu} \weight_{i} h_i \right)  \left( 1 + o(\|h\|) \right)  .
\end{align*}
\mathnote{$=   0 + \LRm \exp\left(\frac{\LRsigma}{2}\right) \left( \sum_{i=1}^{\mu} \weight_{i} h_i \right) \exp\left(-\frac{\LRsigma}{2 n } \sum_{i=1}^{\mu} \weight_{i} \| h_{i} \|^{2} \right)$}
Hence $D \Sxk[1][0](\w_0) (h) = \LRm \exp\left(\frac{\LRsigma}{2}\right) \sum_{i=1}^{\mu} \weight_i h_i$ which is a surjective linear map, which implies that the rank of $\Cxk[1][0](\w_0)$ is $n$. The point $\w_0$ is not in $\Oxk[1][0]$, but since $w \to \Sxk[1][0](w)$ is $C^1$, there exists $\Vopen_{\w_0}$ an open neighborhood of $\w_0$ such that for all $\v \in \Vopen_{\w_0}$, $\Cxk[1][0](\v)$ has rank $n$. Finally since $\Vopen_{\w_0} \cap \Oxk[1][0]$ is not empty (since $f$ has Lebesgue negligible level sets, we can find $\mu$ distinct points arbitrarily close to zero with different $f$-values, the ranked vectors will belong to $\Oxk[1][0]$), there exists $\w^* \in \Oxk[1][0]$ such that $\Cxk[1][0](\w^*)$ has rank $n$.
\end{proof}
The two previous lemmas prove that $0$ is  a steadily attracting state where the rank condition on the controllability matrix is satisfied. Hence according to Theorem~\ref{th:ap}, the Markov chain $\ZZ$ defined in \eqref{eq:updateZt} is an aperiodic $\varphi$-irreducible T-chain and every compact set is small.

\begin{keyword}
\kwd{Markov Chains}
\kwd{Irreducibility}
\kwd{Aperiodicity}
\kwd{T-chain}
\kwd{Control model}
\kwd{Optimization}
\end{keyword}


\renewcommand{\z}{\bs{z}}
\renewcommand{\u}{\bs{u}}

\subsection*{Acknowledgements}
We are very grateful to the anonymous reviewers for their constructive comments that helped us to improve considerably the presentation of the results. A special thanks goes to Reviewer 3 who pointed out a much simpler approach allowing to present nicer proofs. His comments made us realize several important connections with previous results that we had  missed in the first version of the paper and that are now presented.

\section{Appendix} \label{sec:appendix}

\subsubsection*{Proof of Proposition~\ref{pr:GA}}

It is immediate that if $\xstar \in \xdom$ is globally attractive then $\mathrm{(i)}$ holds. Indeed for all $y \in \xdom$, \eqref{eq:GA} implies that $\xstar \in \overline{ \bigcup_{k = 1}^{+\infty} A_+^k(\y) }  \subset \overline{A_+(\y)}$.

\newcommand{\ko}{{k_0}}

Now we show that $\mathrm{(i)}$ implies $\mathrm{(ii)}$. Suppose that $\mathrm{(i)}$ holds, take $\y \in \xdom$, $\Uopen$ an open set containing $\xstar$, and $u \in \Oxk[1][y]$. Since from $\mathrm{(i)}$, $\xstar \in \overline{A_+(\Sxk[1][y](u))}$, there exists $z \in A_+(\Sxk[1][y](u))$ such that $z \in \Uopen$. Either $z \in A_+^0(\Sxk[1][y](u))=\{ \Sxk[1][y](u) \}$, then $u$ is a $1$-step path from $y$ to $\Uo$ or $z \in A_+^k(\Sxk[1][y](u))$ for $k > 0 $ but then there exists $\w \in \Oxk[k][{\Sxk[1][y](u)}]$ such that $z = S_{\Sxk[1][y](u)}^k(\w) = \Sxk[k+1][y](u, \w)$ and thus $(u,\w)$ is a $k+1$ path from $y$ to $\Uo$.

Now we show that $\mathrm{(ii)}$ implies $\mathrm{(iii)}$. Suppose that $\mathrm{(ii)}$ holds. Hence there exists $k_1 \in \Zplusstrict$ and $\w_{1}$ a $k_1$-steps path from $\y$ to $B(\xstar, 1)$. Let $\y_{1}$ denote $\Sxk[k_1][y](\w_{1})$. Inductively for $t \in \Zplusstrict$ there exists $k_{t+1}$ and $\w_{t+1}$ a $k_{t+1}$-steps path from $\y_{t}$ to $B(\xstar, 1/(t+1))$, and we define $\y_{t+1}$ as $\Sxk[k_{t+1}][\y_t](\w_{k+1})$. We then have $\y_{t} \in A_+^{k_1 +\ldots+ k_t}(\y)$ with $k_i >0$ for $i = 1, \ldots, t$, so $\{\y_t: t \in \Zplusstrict\}$ is a subsequence of a sequence of $\prod_{i \in \Zplusstrict} A_+^i(\y)$. Finally $\y_t \in B(\xstar, 1/t)$ so this subsequence converges to $\xstar$.

Finally we show that $\mathrm{(iii)}$ implies that $\xstar$ is a globally attracting state. Suppose that $\mathrm{(iii)}$ holds, that is for all $y \in \xdom$  there exists $\{\y_k: k \in \Zplusstrict\}$ a sequence \new{with $y_k \in A_+^k(\y)$}\del{of $\prod_{k \in \Zplusstrict} A_+^k(\y)$}  from which we can extract a subsequence converging to $\xstar$. Since $\y_k \in A_+^k(\y) \subset \bigcup_{i = k}^{\infty} A_+^i(\y)$, and that for any $k \in \Zplusstrict$, the state $\xstar$ is the limit of a subsequence of $\{\y_i: i \geq k\}$, we have $\xstar \in \overline{\bigcup_{i \geq k} A_+^i(\y)}$ for any $k \in \Zplusstrict$, and so \eqref{eq:GA} holds for all $y \in \xdom$.
\qed

\subsubsection*{Proof of Proposition~\ref{pr:pointproba}}
Let $x \in \xdom$, $\Oo \in \borel(\xdom)$ be an open set, $k \in \Zplusstrict$ and $\w \in \Oxk$ be a $k$-steps path from $x$ to $\Oo$. \del{Since the function $(x,w) \mapsto p_x(w)$ is lower semi-continuous and $F$ is continuous, according to lemmas~\ref{lm:scp} and \ref{lm:plsc}, the functions $(x, \u) \mapsto \pxk(\u)$ and $(x, \u) \mapsto\Sxk(\u)$ are also respectively lower semi-continuous and continuous.} From the continuity of $\Sxk$ \new{(by Lemma~\ref{lm:scp})} there exists $\eta_1 > 0$ such that for all $\u \in B(\w, \eta_1)$, $\Sxk(\u) \in \Oo$. Since $\w \in \Oxk$, $p_0 := \pxk(\w) > 0 $ and from the lower semi-continuity of $\pxk$ \new{(by Lemma~\ref{lm:plsc})}, there exists $\eta_2 > 0$ such that for all $\u \in B(\w, \eta_2)$, $\pxk(\u) > p_0 / 2$. Hence
\begin{align*}
P^k(x, \Oo) &= \int_{\Oxk} \1_{\Oo}(\Sxk(\u)) \pxk(\u) d\u \\
 & \geq \int_{B(\w, \min(\eta_1, \eta_2))} \frac{p_0}{2} d\u > 0 .
\end{align*}
Conversely, for any $x \in \xdom$, $k \in \Zplusstrict$ and $A \in \borel(\xdom)$
$$
P^k(x, A) = \int_{\Oxk} \1_{A}(\Sxk(\u)) \pxk(\u) d\u
$$
and therefore if $P^k(x,A)>0$, then there exists $\u$ such that $\pxk(\u)>0$ and $\Sxk(\u) \in A$ that is, there exists a $k$-steps path from $x$ to $A$.
\qed

\subsubsection*{Proof of Corollary~\ref{cr:GAreachability}}

Let $\xstar$ be a reachable point. Then for all $y \in \xdom$ and $\Oo \in \mathscr{B}(\xdom)$ open containing $\xstar$, there exists $k \in \Zplusstrict$ such that $P^k(y, \OOpen) > 0$. Hence from Proposition~\ref{pr:pointproba} there exists a $k$-steps from $y$ to $\Oo$, and from Proposition~\ref{pr:GA}, $\xstar$ is globally attracting.

Conversely, let $\xstar$ be globally attracting. From Proposition~\ref{pr:GA}, for all $y \in \xdom$ and $\Oo$ open neighborhood of $\xstar$, there exists $k \in \Zplusstrict$ and a $k$-steps path from $y$ to $\Oo$. From Proposition~\ref{pr:pointproba} we have $P^k(y, \Oo)> 0$, and so $\xstar$ is reachable.
\qed

\subsubsection*{Proof of Proposition~\ref{pr:SGA}}

Let $\xstar$ be a steadily attracting state, then from Proposition~\ref{pr:GA} ($\mathrm{ii}$), we immediately find that $\xstar$ is globally attracting.

We now prove $\mathrm{(ii)}$. Suppose that $\xstar$ is steadily attracting, and let $\y \in \xdom$. We will construct a sequence \new{$(y_k)_{k \in \Zplusstrict}$ with $y_k \in A_+^k(y)$} \del{of $\prod_{k \in \Zplusstrict} A_+^k(y)$} converging to $\xstar$. There exists $(T_i)_{i \in \Zplusstrict}$ a strictly increasing sequence of integers such that for all $t \geq T_i$, there exists a $t$-steps path from $\y$ to $B(\xstar, 1/i)$. For any $k \in \Zplusstrict$, we construct the sequence $\y_k$ in the following way. If $k < T_1$, let $\y_k$ be any point of $A_+^k(y)$. Else for the largest $i \in \Zplusstrict$ such that $T_i \leq k$, there exists $\w_k$ a $k$-steps path from $y$ to $B(\xstar, 1/i)$. Let $y_k = \Sxk[k][y](\w_k)$. Then $\{ y_k : k \in \Zplusstrict\}$ \new{with $y_k \in A_+^k(y)$} is a sequence \del{of $\prod_{k \in \Zplusstrict} A_+^k(y)$} converging to $\xstar$.

Now suppose that for all $\y \in \xdom$ there exists a sequence $\{ \y_k : k \in \Zplusstrict\}$ with $y_k \in A_+^k(y)$ which converges to $\xstar$, and take $\Uo$ a neighborhood of $\xstar$. There exists $T$ such that for all $k \geq T$, $y_k \in \Uo$. Since $y_k \in A_+^k(y)$, there exists $\w_k \in \Oxk[k][y]$ such that $\Sxk[k][y](\w_k) = y_k \in \Uo$. Hence $\w_k$ is a $k$-steps path from $y$ to $\Uo$. Since such a $\w_k$ exists for all $k \geq T$, this proves that $\xstar$ is steadily attracting.

We now prove $\mathrm{(iii)}$. Suppose that $\xstar$ is steadily attracting and that $\ystar$ is globally attracting. Then for $\Uo_{\ystar}$ an open neighborhood of $\ystar$, there exists $k \in \Zplusstrict$ and $\w \in \Oxk[k][\xstar]$ such that $\Sxk[k][\xstar](\w) \in \Uo_{\ystar}$. \new{By Lemma~\ref{lm:scp} and \ref{lm:plsc}} \del{Since} the function $x\mapsto \pxk$ is lower semi-continuous and the function $\x \mapsto \Sxk(\w)$ is continuous\new{, so since} \del{and} $\Uo_{\ystar}$ \new{is} open there exists $\epsilon >0$ such that for all $x \in B(\xstar, \epsilon)$, $\w \in \Oxk$ and $\Sxk(\w) \in \Uo_{\ystar}$. And as $\xstar$ is steadily attracting, for all $z \in \xdom$ there exists $T \in \Zplusstrict$ such that for all $t \geq T$ there exists $\u_t \in \Oxk[t][z]$ for which $\Sxk[t][z](\u_t) \in B(\xstar, \epsilon)$. Therefore $\Sxk[t+k][z](\u_t, \w) \in \Uo_{\ystar}$ for all $t \geq T$, that is $\ystar$ is steadily attracting.

\qed

\subsubsection*{Proof of Proposition~\ref{pr:EA2AA}}

We first prove $\mathrm{(i)}$. Since $\xstar$ is attainable, according to \eqref{eq:blabla}, there exists $a \in \Zplusstrict$ and $\w_a \in \Oxk[a][\xstar]$ such that $\xstar = \Sxk[a][\xstar](\w_a)$. Hence for all $t \geq 0$, $\xstar = \Sxk[at][\xstar](\w_a, \ldots, \w_a) \in A_+^{at}(\xstar)$, meaning $a \in E$ and so $E$ is not empty.

Take $(a, b) \in E^2$, and denote $d$ the greatest common divisor of $a$ and $b$. There exists $(t_a, t_b) \in \Zplus$ such that for all $t \geq t_a$, $\xstar \in A_+^{at}(\xstar)$ and for $t \geq t_b$, $\xstar \in A_+^{bt}(\xstar)$. To show that $d \in E$, we will show that there exists $k_0 \in \Zplus$ such that for all $t \geq 0$, there exists $(c_1, c_2) \in \Zplus^2$ for which 
\begin{equation}\label{eq:int1}
(k_0 + t)d = a c_1 + b c_2 .
\end{equation} 
Indeed if this holds as $\xstar \in A_+^{a (c_1 + t_a)}(\xstar)$ and $\xstar \in A_+^{b (c_2 + t_b)}(\xstar)$, $\xstar \in A_+^{(k_0 + t)d + at_a + bt_b}(\xstar)$. Since $d$ divides $a$ and $b$, $\xstar \in A_+^{(k_0+t+p)d}(\xstar)$ for some $p \in \Zplus$. Hence for all $t \geq k_0 + p$, $\xstar \in A_+^{td}(\xstar)$, i.e.\ $d \in E$.

It remains to show \eqref{eq:int1}. According to B\'{e}zout's identity there exists $(u, v) \in \mathbb{Z}^2$ for which $au + bv = d$. If $u$ or $v$ is zero, then $d$ equals $b$ or $a$ and so $d \in E$. Else, w.l.o.g. suppose that $v<0$ (and so $u > 0$). Take $k \geq - v a / d$ and $t \in \Zplus$. We will write $kb+ td$ as a positive sum of $a$ and $b$. As a result of Euclidian division of $t$ by $a/d$ there exists $q \in \Zplus$ and $s \in \lbrace  0, \ldots, a/d-1\rbrace$ such that $t d = a q + s d$, and so $kb + td = kb + qa + sd$. Furthermore $sd = s(au + bv)$ so $kb + td = (k+sv)b + (q + su)a$, with the coefficients $k+sv$ and $q+su$ positive. Hence for all $t \geq 0$, $(kb/d + t)d = (k+sv)b + (q+su)a$ and hence we have proven \eqref{eq:int1} with $c_1=q+su$, $c_2=k+sv$.
 
 

We now prove $\mathrm{(ii)}$. If $\gcd(E) = 1$, then $1 \in E$, i.e.\ there exists $t_0 \in \Zplusstrict$ such that for all $t \geq t_0$, $\xstar \in A_+^{t}(\xstar)$. Since $\xstar$ is attainable, for all $y \in X$ there exists $t_1 \in \Zplusstrict$ such that $\xstar \in A_+^{t_1}(y)$, and so for all $t \geq t_0 + t_1$, $\xstar \in A_+^t(y)$. Therefore for all $\Uo$ neighborhood of $\xstar$ and for all $t \geq t_0 + t_1$ there exists $\w \in \Oxk[t][y]$ such that $\Sxk[t][y](\w) = \xstar \in \Uo$, meaning $\xstar$ is steadily attracting.

We finally prove $\mathrm{(iii)}$. Let $d := \gcd(E)$ and for $i = 0, \ldots , d-1$ take $D_i := \bigcup_{r \in \Zplus} A_+^{rd + i}(\xstar)$. Then
\begin{itemize}
\item for all $y \in A_+^t(\xstar)$, $P(y, A_+^{t+1}(\xstar)) = 1$, so for $i = 0, \ldots, d-1$ and $y \in D_i$, $P(y, D_{i+1 \mod d}) = 1$.
\item take $y \in A_+^i(\xstar)$. Since $\xstar$ is attainable there exists $t_0 \in \Zplusstrict$ such that $\xstar \in A_+^{t_0}(y)$, so $\xstar \in A_+^{t_0 + i}(\xstar)$ and $\xstar \in A_+^{r(t_0 + i)}(\xstar)$ for all $r \geq 0$, i.e.\ $t_0 + i \in E$. As $d = \gcd(E)$, this implies that $i = k_1d - t_0$ for some $k_1 \in \Zplusstrict$. By the same reasoning if $y \in A_+^j(\xstar)$ for some $j \in \Zplus$, then for some $k_2 \in \Zplusstrict$, $j = k_2 d - t_0$ also. Therefore $i - j$ is a multiple of $d$, which implies that the sets $(D_i)_{i = 0, \ldots, d-1}$ are disjoint sets.
\item $\bigcup_{i=0}^{d-1} D_i = A_+(\xstar)$. Since for all $k \in \Zplusstrict$, $P^k(\xstar, A_+(\xstar)^c) = 0$, $\varphi((\bigcup_{i=0}^{d-1} D_i)^c) = 0$ for all $\varphi$ irreducibility measure.
\end{itemize}
Hence $(D_i)_{i = 0, \ldots, d-1}$ is a $d$-cycle.
\qed

\subsubsection*{Derivation of \eqref{eq:Cpartial}:}
By chain rule, for $i = 1, \ldots, k-1$,
\begin{align*}
\left[ \frac{\partial \Sxk[k][\new{y}\del{x}]}{\partial w_i} \right]_{\del{(x, }(w_1, \ldots, w_k)\del{)}}  &= \left[\frac{\partial F}{\partial x} \right]_{(\Sxk[k-1][\new{y}\del{x}], w_k)} \left[\frac{\partial \Sxk[k-1][\new{y}\del{x}]}{\partial w_{i}}\right]_{\del{(x,} (w_1, \ldots, w_{k-1})\del{)}} \\
 &= A_{k-1} \left[\frac{\partial \Sxk[k-1][\new{y}\del{x}]}{\partial w_{i}}\right]_{\del{(x,} (w_1, \ldots, w_{k-1})\del{)}}  \\
 &= A_{k-1} \ldots A_i \left[ \frac{\partial \Sxk[i][\new{y}\del{x}]}{\partial w_i} \right]_{\del{(x,} (w_1, \ldots, w_i)\del{)}} .
\end{align*}
Furthermore, for $i = 1, \ldots, k$,
$$
\left[ \frac{\partial \Sxk[i][\new{y}\del{x}]}{\partial w_i} \right]_{\del{(x,} (w_1, \ldots, w_i)\del{)}} = \left[\frac{\partial F}{\partial w} \right]_{(\Sxk[i-1][\new{y}\del{x}], w_i)} = B_{i-1} ,
$$
and so
$$
\left[ \frac{\partial \Sxk[k][\new{y}\del{x}]}{\partial w_i} \right]_{\del{(x,} (w_1, \ldots, w_k)\del{)}} = A_{k-1} \ldots A_{i} B_{i-1}.
$$

\subsubsection*{Two lemmas}

We state two simple lemmas whose results are often used. The first one is given without any proof.
\begin{lemma} \label{lm:scp}
Suppose that the function $(x,w) \mapsto F(x, w) $ is $C^m$ for $m \in \N$, then for all $k \in \Zplusstrict$, the function $(x, \w) \mapsto \Sxk(\w)$ defined in \eqref{eq:sxk} is $C^m$.
\end{lemma}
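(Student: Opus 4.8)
The plan is to prove the statement by a straightforward induction on $k \in \Zplusstrict$, using the recursion \eqref{eq:sxk} together with the elementary fact that a composition of $C^m$ maps is again $C^m$.

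For the base case $k=1$, I would simply observe that \eqref{eq:sxk} gives $\Sxk[1](w_1) = F(\Sxk[0], w_1) = F(x, w_1)$, so the map $(x, w_1) \mapsto \Sxk[1](w_1)$ is precisely $F$ and hence $C^m$ by hypothesis; in particular it takes values in $\xdom$ because $F$ does. For the inductive step, assuming $(x, (w_1, \ldots, w_{k-1})) \mapsto \Sxk[k-1](w_1, \ldots, w_{k-1})$ is a $C^m$ map from $\xdom \times \R^{p(k-1)}$ into $\xdom$, I would introduce the auxiliary map
\begin{equation*}
G_k : \xdom \times \R^{pk} \to \xdom \times \R^{p}, \qquad (x, w_1, \ldots, w_k) \mapsto \bigl( \Sxk[k-1](w_1, \ldots, w_{k-1}),\, w_k \bigr) .
\end{equation*}
Its first component is the composition of $\Sxk[k-1]$ with the linear (hence $C^\infty$) projection $(x, w_1, \ldots, w_k) \mapsto (x, w_1, \ldots, w_{k-1})$, and is therefore $C^m$; its second component is the linear projection onto $w_k$, which is $C^\infty$. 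Hence $G_k$ is $C^m$. Since \eqref{eq:sxk} reads $\Sxk(\w) = F(G_k(x, \w))$ and $F$ is $C^m$, the composition $F \circ G_k$ is $C^m$, which is exactly the claim for the index $k$, and $\Sxk$ again takes values in $\xdom$. This closes the induction.

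I do not expect any genuine obstacle: the argument is purely formal. The only two points worth a line of care are that the intermediate maps remain valued in the domain $\xdom$ of the first argument of $F$ — which follows inductively from $F(\xdom \times \R^{p}) \subseteq \xdom$ — and that the borderline case $m = 0$ is genuinely covered, since it is nothing but the statement that a composition of continuous maps is continuous. As a by-product, differentiating the identity $\Sxk = F \circ G_k$ and unrolling the recursion reproduces the chain-rule expression for the Jacobian of $(w_1, \ldots, w_k) \mapsto \Sxk(\w)$ that is used in the derivation of \eqref{eq:Cpartial}.
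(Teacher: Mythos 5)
Your proof is correct and follows essentially the same route as the paper's (omitted, straightforward) argument: induction on $k$, writing $\Sxk$ as $F$ composed with the map $(x,\w) \mapsto (\Sxk[k-1](w_1,\ldots,w_{k-1}), w_k)$ and invoking that compositions of $C^m$ maps are $C^m$. No gaps; the extra remarks on the range staying in $\xdom$ and the case $m=0$ are fine but not needed beyond what you wrote.
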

\del{\begin{proof}
By hypothesis, the function $(x, w) \mapsto \Sxk[1](w) = F(x,w)$ is $C^m$. Suppose that $(x,w_1, \ldots, w_k) \mapsto \Sxk(w_1, \ldots, w_k)$ is $C^m$. \alex{could be replaced with $w_1^k$ notation} Then the function $h : (x, w_1, \ldots, w_{k+1}) \mapsto (\Sxk(w_1, \ldots, w_k), w_{k+1})$ is $C^m$, and so is $(x, w_1, \ldots, w_{k+1}) \mapsto \Sxk[k+1](w_1, \ldots, w_k) = F \circ h(x, w_1, \ldots, w_{k+1})$.
\end{proof}}

\begin{lemma} \label{lm:plsc}
Suppose that the function $p: (x, w) \mapsto p_x(w)$  is lower semi-continuous and the function $(x,w) \mapsto F(x, w) $ is continuous, then for all $k \in \Zplusstrict$ the function $(x, \w) \mapsto \pxk(\w)$ defined in \eqref{eq:pxk} is lower semi-continuous.
\end{lemma}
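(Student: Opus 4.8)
The plan is to proceed by induction on $k$, leaning on two elementary stability properties of lower semi-continuous (l.s.c.)\ functions. \emph{Property (a):} if $g$ is l.s.c.\ on a topological space $Z$ and $h\colon Y\to Z$ is continuous, then $g\circ h$ is l.s.c.\ on $Y$, since $\{g\circ h > a\} = h^{-1}(\{g>a\})$ is open for every $a$. \emph{Property (b):} the product of two \emph{nonnegative} l.s.c.\ functions is l.s.c. Property (b) is the only place where nonnegativity enters, and it is available here precisely because $p_x$ and all the $\pxk$ are (partial) probability densities, hence nonnegative.

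For the base case $k=1$, the identity $\pxk[1](w_1)=p_x(w_1)$ together with assumption $\mathrm{A4}$ gives directly that $(x,w_1)\mapsto\pxk[1](w_1)$ is l.s.c.

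For the inductive step, I would assume that $(x,w_1,\dots,w_{k-1})\mapsto\pxk[k-1](w_1,\dots,w_{k-1})$ is l.s.c.\ on $\xdom\times\R^{p(k-1)}$. Composing with the continuous coordinate projection $(x,w_1,\dots,w_k)\mapsto(x,w_1,\dots,w_{k-1})$ and using property (a), this map remains l.s.c.\ when regarded as a function on $\xdom\times\R^{pk}$. Next, Lemma~\ref{lm:scp} applied with $m=0$ shows that $(x,w_1,\dots,w_{k-1})\mapsto S_x^{k-1}(w_1,\dots,w_{k-1})$ is continuous, hence so is $(x,w_1,\dots,w_k)\mapsto\bigl(S_x^{k-1}(w_1,\dots,w_{k-1}),\,w_k\bigr)$ from $\xdom\times\R^{pk}$ into $\xdom\times\R^{p}$. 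Composing this continuous map with the l.s.c.\ function $(y,w)\mapsto p_y(w)$ and invoking property (a) once more, the map $(x,w_1,\dots,w_k)\mapsto p_{S_x^{k-1}(w_1,\dots,w_{k-1})}(w_k)$ is l.s.c.\ on $\xdom\times\R^{pk}$. Finally, by the defining recursion \eqref{eq:pxk}, $\pxk[k]$ is the product of this last function with $\pxk[k-1]$, both l.s.c.\ and nonnegative, so property (b) yields that $(x,\w)\mapsto\pxk[k](\w)$ is l.s.c., which closes the induction.

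The step I expect to need the most care is property (b): it fails for general l.s.c.\ functions and is used here only thanks to nonnegativity. I would justify it directly from the inequality $\liminf_{z\to z_0} f(z)g(z)\ \ge\ \bigl(\liminf_{z\to z_0} f(z)\bigr)\bigl(\liminf_{z\to z_0} g(z)\bigr)\ \ge\ f(z_0)g(z_0)$, valid whenever $f,g\ge 0$ (with the usual conventions in $[0,+\infty]$). Everything else is routine bookkeeping with compositions and coordinate projections.
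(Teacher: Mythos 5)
Your proof is correct and follows essentially the same route as the paper's: induction on $k$, lower semi-continuity of the composition of a lower semi-continuous function with a continuous map (using Lemma~\ref{lm:scp} for the continuity of $(x,\w)\mapsto S_x^{k-1}(\w)$), and the fact that a product of two nonnegative lower semi-continuous functions is lower semi-continuous. The only cosmetic difference is that you justify the product fact directly via the $\liminf$ inequality, whereas the paper cites an external reference for it; both are fine.
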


\begin{proof}
According to Lemma~\ref{lm:scp}, the function $(x,\w) \mapsto \Sxk(\w)$ is continuous, and by hypothesis, the function $p$ is lower semi-continuous. Now suppose that the function $(x, \w) \mapsto \pxk(\w)$ is lower semi-continuous. The function $(x, \w, u) \mapsto p_{{\Sxk(\w)}}(u)$ is lower semi-continuous as the composition of a continuous and a lower semi-continuous function. So by ~\eqref{eq:pxk} the function $(x, \w, u) \mapsto \pxk[k+1](\w, u)$ is lower semi-continuous as the product of two non-negative lower semi-continuous functions (see for instance \cite[Proposition~B.1]{puterman}).
\mathnote{Take $f$ and $g$ two non-negative functions lower semi-continuous at $x$, and take $\epsilon > 0$. If $\epsilon \geq f(x)g(x)$ then for all $y$, $f(y)g(y) \geq  0 \geq f(x)g(x) - \epsilon$. Else, $f(x)$ and $g(x)$ are strictly positive and there exists $\alpha > 1$ such that $f(x)g(x) / \alpha \geq f(x)g(x) - \epsilon$. By lower semi-continuity of $f$ and $g$ at $x$, there exists $\eta>0$ such that for all $y \in B(x, \eta)$, $f(y) > f(x)/\sqrt{\alpha}$ and $g(y) > g(x)/\sqrt{\alpha}$. Therefore $f(y)g(y) \geq f(x)g(x)/\alpha \geq f(x)g(x) - \epsilon$, which shows the lower semi-continuity of $fg$ at $x$.}
\end{proof}

\subsubsection*{Proof of Proposition~\ref{pr:gacontrol}}

The matrix $\Cxk[k][\xstar](\wstar)$ has rank $n$ and thus with the expression of the matrix given in \eqref{eq:Cpartial}, we can find $i_1, \ldots, i_n$ such that
$$
\det \left[ \frac{\partial S_{x^*}^k}{\partial w_{i_1}} | \ldots | \frac{\partial S_{x^*}^k}{\partial w_{i_n}} \right]_{\wstar} \neq 0 . 
$$
\newcommand{\Nopen}{\mathscr{N}}
Since the function $(x, \w) \mapsto \Sxk(\w)$ is a $C^1$ function (as a consequence of the continuity of $F$ and Lemma~\ref{lm:scp}), there exists $\Nopen_{(\xstar, \wstar)}$ a neighborhood of $(\xstar, \wstar)$ such that for all $(x, \w) \in \Nopen_{(\xstar, \wstar)}$
$$
\det \left[ \frac{\partial S_{x}^k}{\partial w_{i_1}} | \ldots | \frac{\partial S_{x}^k}{\partial w_{i_n}} \right]_{\w} \neq 0 . 
$$

Let $r_1 > 0 $ and $r_2 > 0$ such that $B(\xstar, r_1) \times B(\wstar, r_2) \subset \Nopen_{(\xstar, \wstar)}$.
 Since $\xstar$ is a globally attracting state, for all $y \in \xdom$ according to Proposition~\ref{pr:GA} there exists $t_0 \in \Zplusstrict$ and $\uu_0$ a $t_0$-steps path from $y$ to $B(\xstar, r_1)$. Since $(\Sxk[t_0][y](\u_0), \wstar) \in \Nopen_{(\xstar, \wstar)}$, the matrix $\Cxk[k][{\Sxk[t_0][y](\u_0)}](\w^*)$ has rank $n$. Hence, taking $T = t_0 + k$ and $\uu = (\uu_0, \wstar)$, according to \eqref{eq:controlrec} the matrix $\Cxk[T][y](\uu)$ also has rank $n$.
\qed

\subsubsection*{Proof of Proposition~\ref{pr:GAEA}}

This result is a consequence of the implicit function theorem. Take $\xstar \in \xdom$, $k\in \Zplusstrict$ and $\wstar \in \Oxk[k][\xstar]$ for which the controllability matrix $\Cxk[k][\xstar](\wstar)$ has rank $n$. We first prove that there exists $\Uo_{\xstar}$ a neighborhood of $\xstar$ such that for all $y \in \Uo_{\xstar}$, there exists $\u \in \Oxk[k][y]$ such that $\Sxk[k][y](\u) = \Sxk[k][\xstar](\wstar)$.
Since for all $m \in \Zplusstrict$ the function $(x,\w) \mapsto \pxk[m](\w)$ is lower semi-continuous (as a consequence of Lemma~\ref{lm:plsc}), there exists $\Oo$ an open neighborhood of $(\xstar, \wstar)$ such that for all $(x, \w) \in \Oo$, $\pxk(\w) > 0$. Since $\rank(\Cxk[k][\xstar](\wstar)) = n$, using the expression of the controllability matrix given in \eqref{eq:Cpartial} (where $w_i$ are coordinates rather than vectors), there exists integers $({i_1}, \ldots, {i_n})$ such that
$$
\det \left[ \frac{\partial \Sxk[k][\xstar]}{\partial w_{i_1}} | \ldots | \frac{\partial \Sxk[k][\xstar]}{\partial w_{i_n}} \right]_{\wstar} \neq 0 .
$$

Assume that $i_1 = kp - n + 1, \ldots, i_n = kp$ (which can be imposed by considering a composition of $\Sxk$ with a function permuting the variables). The partial differential of the $C^1$ function $(x, w_1, \ldots, w_{kp}) \mapsto \Sxk(w_1, \ldots, w_{kp})$ \todo{I think you kind of change notation here, before $w_1$ was a vector of $\R^p$ (first coordinate of $\w$) now it's just a coordinate, right?} with respect to $(w_{kp -n +  1}, \ldots, w_{kp})$ is invertible at $(\xstar, \wstar)$, so according to the implicit function theorem applied to this function restricted to $\Oo$, there exists $\Uo$ and $\Vo$ open neighborhoods of respectively $(\xstar, w_1^*, \ldots, w_{kp -n}^*)$ and $(w_{kp -n +1}^*, \ldots, w_{kp}^*)$ such that $\Uo \times \Vo \subset \Oo$, and a $C^1$ function $g : \Uo \to \Vo$ such that $\Sxk(w_1, \ldots, w_{kp-n}, g(x, w_1, \ldots, w_{kp-n})) = \Sxk[k][\xstar](\wstar)$ \anne{So we apply the implicit function theorem to the function $(w_1,\ldots,w_{kp}) \to \Sxk(w_1, \ldots, w_{kp}) -  \Sxk[k][\xstar](\wstar) $, right?}\alex{yes, the implicit function theorem is specified on wiki for $\0$, but with the trick you gave it's easy to see that it holds for any point reached.} for all $(x, w_1, \ldots, w_{kp-n}) \in \Uo$, which proves our first point by taking $\Uo_{\xstar} = \lbrace x \in \xdom | (x, w_1, \ldots, w_{kp -n}) \in \Uo\rbrace$.

Now suppose that $\xstar$ is globally attracting. Then according to Proposition~\ref{pr:GA} $\mathrm{(ii)}$,  for all $z \in \xdom$ there exists $t_0 \in \Zplusstrict$ and $\v \in \Oxk[t_0][z]$ such that $\Sxk[t_0][z](\v) \in \Uo_{\xstar}$. And so there exists $\u \in \Oxk[k][{\Sxk[t_0][z](\v)}]$ such that $\Sxk[t_0 + k][z](\v, \u) = \Sxk[k][\xstar](\wstar)$.
\qed

\subsubsection*{Proof of Proposition~\ref{pr:forwardaccess}}

Let $\xstar \in \xdom$ such that there exists $k \in \Zplusstrict$ and $\wstar = (w^*_1, \ldots, w^*_{kp}) \in \Oxk[k][\xstar]$ such that $\rank(\Cxk[k][\xstar](\wstar)) = n$, and let us prove that $A_+^k(\xstar)$ contains an open set (which would imply the forward accessibility of the control model, given that the condition holds for all $\xstar \in \xdom$). 

Since $\rank(\Cxk[k][\xstar](\wstar)) = n$, following \eqref{eq:Cpartial} there exists integers $\lbrace i_1, \ldots, i_n \rbrace$ such that
$$
\det \left[ \frac{\partial \Sxk[k][\xstar]}{\partial w_{i_1}} | \ldots | \frac{\partial \Sxk[k][\xstar]}{\partial w_{i_{n}}} \right]_{\wstar} \neq 0 .
$$
Assume that $i_1 = kp - n + 1$, $\ldots$, $i_n = kp$ (which can be imposed by considering the composition of $\Sxk[k][\xstar]$ with a function permuting the variables), let $G$ denote the $C^1$ function $(u_1, \ldots, u_n) \in \R^n \mapsto \Sxk[k][\xstar](w^*_1, \ldots, w^*_{kp-n}, u_1, \ldots, u_n)$ and $\Oo$ the set $\lbrace (u_1, \ldots, u_n) \in \R^n | (w^*_1, \ldots, w^*_{kp-n}, u_1, \ldots, u_n) \in \Oxk[k][\xstar] \rbrace$ (which is open since $\Oxk[k][\xstar]$ is open). 
Then the Jacobian determinant of $G$ is non-zero, and according to the inverse function theorem applied to $G$ restricted to $\Oo$ there exists $\Uo \subset \Oo$ 
and $\Vo$ open neighborhoods of respectively $(w^*_{kp-n+1}, \ldots, w^*_{kp})$ and $\Sxk[k][\xstar](\wstar)$, such that $G$ is a bijection from $\Uo$ to $\Vo$. \del{Hence for all $y \in \Vo$ there exists $(u_1, \ldots, u_n) \in \Uo$ such that $\Sxk[k][\xstar](w^*_1, \ldots, w^*_{kp-n}, u_1, \ldots, u_n) = y$ and since $\Uo \subset \Oo$, $(w^*_1, \ldots, w^*_{kp-n}, u_1, \ldots, u_n) \in \Oxk[k][\xstar]$ and so  $y \in A_+^k(\xstar)$.}\alex{seems obvious enough} Therefore $\Vo \subset A_+^k(\xstar)$ which hence has non empty interior.


Suppose now that $F$ is $C^\infty$ and that the control model is forward accessible. Then, for all $\x \in \xdom$, $A_+(\x)$ is not Lebesgue negligible. Since $\sum_{k \in \Zplus} \mleb (A_+^k(\x)) \geq \mleb(A_+(\x)) > 0$, there exists $k \in \Zplusstrict$ such that $\mleb(A_+^k(\x)) > 0$ ($k \neq 0$ because $A_+^0(\x) = \lbrace \x \rbrace$ is Lebesgue negligible). Let $N_x^k$ denote the set $\lbrace \w \in \Oxk | \w \textrm{ is a critical point of } \Sxk \rbrace$ (i.e.\  $\w \in N_x^k$ if and only if $\rank(\Cxk(\w)) < n$). According to Lemma~\ref{lm:scp} the function $\Sxk$ is $C^\infty$, so we can apply Sard's theorem \cite[Theorem II.3.1]{zbMATH03210572} to $\Sxk$ which implies that the image of its critical points is Lebesgue  negligible, i.e.\ $\mleb(\Sxk(N_x^k)) = 0$. Since $\mleb(\Sxk(N_x^k)) < \mleb( A_+^k(\x))$ and $A_+^k(x) = \Sxk(\Oxk)$, $N_x^k$ is a strict subset of $\Oxk$ meaning there exists $\w \in\Oxk$ for which $\rank(\Cxk(\w)) = n$.
\mathnote{Generalization: if $F$ is $C^{kp - n + 1}$ and if $A+^k(x)$ has non-empty interior for some $x$ then there exists $\w \in \Oxk$ such that $\Cxk(w)$ has rank $n$.}
\qed

\subsubsection*{Proof of Proposition~\ref{pr:2.1}}

In order to prove the proposition, we use the following lemma, which is identical to \cite[Lemma~3.0]{meyn-caines1991} with the exception that the function $G$ is here assumed to be $C^1$ instead of $C^\infty$\new{, and that $\R^n$ has in some places been replaced with $\Xo_1$}. These changes do not impact the proof of the lemma. Variable names have also been changed for consistency with the notations used in this article.

\begin{lemma}[{\cite[Lemma~3.0]{meyn-caines1991}}] \label{lm:3.0}
Let $\Xo_1 \subset \R^n$, $\widetilde{\Wo}_1 \subset \R^m$ and $\widehat{\Wo}_1 \subset \R^n$ be open sets and let $G: (x,\tilde{w},\hat{w}) \in \Xo_1 \times \widetilde{\Wo}_1 \times \widehat{\Wo}_1 \mapsto z \in \del{\R^n}\new{\Xo_1}$ be a $C^1$ function such that the matrix $\partial G / \partial \hat{w}$ has rank $n$ at some $(x_0, \tilde{w}_0, \hat{w}_0) \in \Xo_1 \times \widetilde{W}_1 \times \widehat{W}_1$. Then
\begin{enumerate}[(i)]
\item	there exists an open set $\Xo \times \widetilde{W} \times \widehat{W} \subset \Xo_1 \times \widetilde{W}_1 \times \widehat{W}_1$ containing $(x_0, \tilde{w}_0, \hat{w}_0)$ such that for all $x \in \Xo$ the measure $\nu(x, \cdot)$ defined by
$$
\nu(x, \cdot) : A \in \borel(\del{\R^n}\new{\Xo_1}) \mapsto \int_{\widetilde{W}} \int_{\widehat{W}} \1_{A}(G(x,\tilde{w},\hat{w})) d\tilde{w} d\hat{w}
$$
is equivalent to Lebesgue measure on an open set $\Ro_x$.
\item there exists $c>0$ and open sets $\Uo_{x_0}$ and $\Vo_{x_0}^{(\tilde{w}_0, \hat{w}_0)}$ containing $x_0$ and $G(x_0, \tilde{w}_0, \hat{w}_0)$ respectively, such that for all $x \in \Xo$ and $A \in \borel(\new{\Xo_1}\del{\R^n})$
$$
\nu(x, A) \geq c\1_{\Uo_{x_0}}(x) \mleb(A \cap \Vo_{x_0}^{(\tilde{w}_0, \hat{w}_0)})
$$
\end{enumerate}
\end{lemma}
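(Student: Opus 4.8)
The plan is to reduce the whole statement to the inverse function theorem and the change-of-variables formula, both of which are valid for $C^1$ maps; this makes the weakening from $C^\infty$ to $C^1$ cost nothing, and since every construction is local around $(x_0,\tilde w_0,\hat w_0)$ the replacement of $\R^n$ by the open codomain $\Xo_1$ is equally harmless. The central device is the ``graph'' map
$$
\Psi : (x,\tilde w,\hat w) \longmapsto (x,\tilde w,G(x,\tilde w,\hat w)).
$$
Its Jacobian at $(x_0,\tilde w_0,\hat w_0)$ is block lower-triangular with diagonal blocks $I$, $I$ and $\partial G/\partial\hat w$, so its Jacobian determinant equals $\det(\partial G/\partial\hat w)$, which is nonzero by hypothesis (recall $\hat w\in\widehat W\subset\R^n$, so this block is square). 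By the inverse function theorem $\Psi$ restricts to a $C^1$ diffeomorphism from an open neighborhood of $(x_0,\tilde w_0,\hat w_0)$ onto an open neighborhood of $(x_0,\tilde w_0,z_0)$, where $z_0:=G(x_0,\tilde w_0,\hat w_0)$. First I would shrink this neighborhood to a product $\Xo\times\widetilde W\times\widehat W$ whose closure is compact and contained in the domain of the diffeomorphism; on that compact set the continuous function $J:=|\det(\partial G/\partial\hat w)|^{-1}$ is bounded below by a constant $c_1>0$.

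For part (i) I fix $x\in\Xo$ and rewrite $\nu(x,\cdot)$ by applying the change of variables $z=G(x,\tilde w,\hat w)$ in the inner $\hat w$-integral. This is legitimate because, $\Psi$ being a diffeomorphism on the product, the map $G_x^{\tilde w}:=G(x,\tilde w,\cdot)$ is for each fixed $(x,\tilde w)$ an injective $C^1$ map of full rank, hence a diffeomorphism of $\widehat W$ onto the open set $G(x,\tilde w,\widehat W)$. After the substitution and an application of Fubini to integrate out $\tilde w$, the measure $\nu(x,\cdot)$ acquires a density $g_x$ with respect to $\mleb$, namely $g_x(z)=\int_{\{\tilde w\,:\,z\in G(x,\tilde w,\widehat W)\}}J\,d\tilde w$. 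Setting $\Ro_x:=G(x,\widetilde W,\widehat W)$, which is open because $G_x:=G(x,\cdot,\cdot)$ is a submersion and therefore an open map, I would verify that $g_x>0$ exactly on $\Ro_x$ (the defining fibre is nonempty precisely for $z\in\Ro_x$, and then, by openness of the image, it has positive measure while $J>0$). Equivalence of $\nu(x,\cdot)$ with Lebesgue measure on $\Ro_x$ is then immediate from $\nu(x,A)=\int_A g_x\,d\mleb$.

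For part (ii) I exploit the uniformity in $x$ furnished by the single diffeomorphism $\Psi$. Its image $\Psi(\Xo\times\widetilde W\times\widehat W)$ is an open neighborhood of $(x_0,\tilde w_0,z_0)$, hence contains a product $\Uo_{x_0}\times\widetilde W'\times\Vo_{x_0}^{(\tilde w_0,\hat w_0)}$ of open neighborhoods of $x_0$, $\tilde w_0$ and $z_0$ respectively, with $\Uo_{x_0}\subset\Xo$ and $\widetilde W'$ of finite positive Lebesgue measure. This inclusion says precisely that for every $(x,\tilde w)\in\Uo_{x_0}\times\widetilde W'$ one has $\Vo_{x_0}^{(\tilde w_0,\hat w_0)}\subset G(x,\tilde w,\widehat W)$. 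Substituting this into the change-of-variables expression from part (i) and bounding the Jacobian factor below by $c_1$ yields, for all $x\in\Uo_{x_0}$ and $A\in\borel(\Xo_1)$,
$$
\nu(x,A)\ \geq\ c_1\int_{\widetilde W'}\mleb\big(A\cap G(x,\tilde w,\widehat W)\big)\,d\tilde w\ \geq\ c_1\,\mleb(\widetilde W')\,\mleb\big(A\cap\Vo_{x_0}^{(\tilde w_0,\hat w_0)}\big),
$$
so the claim holds with $c:=c_1\,\mleb(\widetilde W')$.

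The step I expect to be the main obstacle is the measure-theoretic bookkeeping in part (i): justifying the fibre-wise substitution together with the interchange of the inner change of variables and the outer $\tilde w$-integration (measurability of the fibres $\{\tilde w:z\in G(x,\tilde w,\widehat W)\}$ and applicability of Fubini), and confirming that the resulting density $g_x$ vanishes exactly off $\Ro_x$. Crucially, none of this requires more than $C^1$ regularity—the only analytic inputs are the inverse function theorem and the change-of-variables formula for $C^1$ diffeomorphisms—so the argument is identical, mutatis mutandis, to the $C^\infty$ version of \cite[Lemma~3.0]{meyn-caines1991}.
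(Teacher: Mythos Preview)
Your proposal is correct and follows essentially the same route as the paper, which does not reprove the lemma but simply cites \cite[Lemma~3.0]{meyn-caines1991} and remarks that the passage from $C^\infty$ to $C^1$ and from $\R^n$ to $\Xo_1$ does not affect the argument. The key device in the Meyn--Caines proof is precisely your graph map $\Psi(x,\tilde w,\hat w)=(x,\tilde w,G(x,\tilde w,\hat w))$, to which the inverse function theorem is applied, followed by the change of variables $z=G(x,\tilde w,\hat w)$ in the inner integral and a lower bound on the Jacobian factor on a compact product box; your write-up reproduces this faithfully.
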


\mathnote{
Since $G$ is $C^1$ and that $[\partial G / \partial \hat{w}]$ is invertible at $(x_0, \tilde{w}_0, \hat{w}_0)$, there exists $\Oo$ an open  neighborhood of $(x_0, \tilde{w}_0, \hat{w}_0)$ such that for all $(x, \tilde{w}, \hat{w}) \in \Oo$, $\det[\partial G / \partial \hat{w} ]$ is non-zero at $(x, \tilde{w}, \hat{w})$. With the inverse function theorem applied to the function $G^*$ defined through
$$
G^* : (x, \tilde{w}, \hat{w})  \in \Oo \mapsto \left( \begin{array}{c} x \\ u \\ G(x,u,w) \end{array} \right) ,
$$
there exists $\Oo_1$ and $\Ro_1$ open neighborhoods of respectively $(x_0, \tilde{w}_0, \hat{w}_0)$ and of $G^*(x_0, \tilde{w}_0, \hat{w}_0)$ and a $C^1$ function $H^*$ such that $G^*$ is a bijection from $\Oo_1$ to $\Ro_1$ with inverse $H^*$, and
$$
\left[ \frac{\partial G^*}{\partial \hat{w}} \right] = \left[ \frac{\partial H^*}{\partial \hat{w}} \right]^{-1} .
$$
Since $\Oo_1$ is an open neighborhood of $(x_0, \tilde{w}_0, \hat{w}_0)$, there exists $\Xo$, $\tilde{\Wo}$ and $\hat{\Wo}$ such that $\Xo \times \tilde{\Wo} \times \hat{\Wo}$ is an open neighborhood of $(x_0, \tilde{w}_0, \hat{w}_0)$. Let $\Ro$ be the set $G^*(\Xo \times \tilde{\Wo} \times \hat{\Wo})$. Since the inverse of $G^*$ is continuous, $\Ro$ is an open set. Let $H$ be the composition of $H^*$ by a projection such that $H(x, \tilde{w}, G(x, \tilde{w}, \hat{w})) = G^*(x, \tilde{w}, \hat{w}) = \hat{w}$.
 As $\det [\partial G / \partial \hat{w}] > 0$ for all $(x, \tilde{w}, \hat{w}) \in \Xo \times \tilde{\Wo} \times \hat{\Wo} \subset \Oo_1 \subset \Oo$, there exists $h_0$ such that $\det [\partial H / \partial \hat{w}] > h_0$ for all $(x, \tilde{w}, v) \in \Ro$. The rest follow with a change of variables :).
}


The proof of Proposition~\ref{pr:2.1} $\mathrm{(i)}$ differs from the proof in \cite[Theorem~2.1]{meyn-caines1991} in \eqref{eq:p0} where the inequation holds for $y \in B_\delta(x_0, \delta)$ (instead of for all $y \in \xdom$). This does not impact the rest of the proof.\TODO{I am lost then why do we do a proof?}\alex{sake of completeness?} The detailed proof is given below. The proof of \cite[Theorem~2.1]{meyn-caines1991} uses Sard's theorem to show that weak stochastic controllability implies that for all $x \in \xdom$, there exists $k \in \Zplusstrict$ and $\w \in \Oxk$ such that the rank of $\Cxk(\w)$ is $n$. We use here the same idea to show $\mathrm{(ii)}$, with a slight precision to show that $\w$ may be taken in the open set $\Vo$ of \eqref{eq:wsc}.\\

We are now ready to start the core of the proof of Proposition~\ref{pr:2.1}. 
Suppose that the controllability matrix $\Cxk[k][x_0](\w_0)$ has rank $n$ for some $x_0 \in \xdom$, $k \in \Zplusstrict$ and $\w_0 \in \Oxk[k][x_0]$. Since the function $(x, \w) \mapsto \pxk(\w)$ is lower semi-continuous (as a consequence of Lemma~\ref{lm:plsc}) and that $\pxk[k][x_0](\w_0) > 0$, there exists $p_0 >0$ and $\delta$, 
such that $\pxk(\w) > p_0$ for all $(x,\w) \in B(x_0, \delta) \times B(\w_0,\delta)$. Hence
\begin{align} \label{eq:p0}
P^k(y, A) &= \int_{\Oxk[k][y]} \1_A(\Sxk[k][y](\w)) \pxk[k][y](\w) d\w \nonumber \\
 	& \geq p_0 \int_{B(\w_0, \delta)} \1_A(\Sxk[k][y](\w)) d\w  \textrm{ for all } y \in B(x_0, \delta) .
\end{align}

Since $\rank(\Cxk[k][x_0](\w_0)) = n$, we can extract a sequence of $n$ integers $(i_1, \ldots, i_n)$ for which
$$
\det \left[ \frac{\partial \Sxk[k][x_0]}{\partial w_{i_1}} | \ldots | \frac{\partial \Sxk[k][x_0]}{\partial w_{i_n}} \right]_{\w_0} \neq 0 .
$$
Hence we can apply Lemma \ref{lm:3.0} to the measure $\nu(y,A)=\int_{B(\w_0, \delta)} \1_A(\Sxk[k][y](\w)) d\w$ by appropriately defining $G$ in terms of $\Sxk$ (separating $\w \in B(\w_0, \delta)$ into $\hat{\w} = (w_{i_1}, \ldots, w_{i_n})$ and $\tilde{\w}$ equal to the other coordinates, and defining $G : (x, \tilde{\w}, \hat{\w}) \mapsto \Sxk(\w)$ which ensures that $\partial G / \partial \hat{\w}(x_0, \tilde{\w_0}, \hat{\w_0})$ has rank $n$). This shows the existence of open sets $\Uo_{x_0}$ and $\Vo_{x_0}^{\w_0}$ containing $x_0$ and $\Sxk[k][x_0](\w_0)$ respectively, an open set $\Ro_{x_0}$ and a constant $c>0$ such that
\begin{align}
&  \int_{B(\w_0, \delta)} \1_A(\Sxk[k](\w)) d\w \geq c \1_{\Uo_{x_0}}(y) \mleb(A \cap \Vo_{x_0}^{\w_0})  \textrm{ for all } y \in \xdom . \label{eq:smallsetdom}
\end{align}
Combining \eqref{eq:p0} and \eqref{eq:smallsetdom} shows that for all $y$ in the open set $\Uo_{x_0} \cap B(x_0, \delta)$,
$$
P^k(y, A) \geq c p_0 \mleb(A \cap \Vo_{x_0}^{\w_0}),
$$
which shows $\mathrm{(i)}$.

Now suppose that $F$ is $C^\infty$, hence according to Lemma~\ref{lm:scp} the function $(x, \w) \mapsto \Sxk(\w)$ is also $C^\infty$ for all $k \in \Zplusstrict$. Take $k \in \Zplusstrict$, $c > 0$ and $\Vo \in \borel(\xdom)$ an open set for which \eqref{eq:wsc} holds. Let $N$ denote the set of critical values of $\Sxk$, i.e.\ $$N := \lbrace \Sxk(\w) \in \xdom | \w \in \Oxk \textrm{ and } \rank(\Cxk(\w)) < n  \rbrace .$$
By Sard's Theorem, $\mleb(N) = 0$. Hence with \eqref{eq:wsc} for $A = \Vo \backslash N$, $P^k(x, \Vo \backslash N) \geq c \mleb( \Vo \backslash N \cap \Vo) = c \mleb(\Vo) > 0$. Hence there exists $\w \in \Oxk$ such that $\Sxk(\w) \in \Vo \backslash N$ (otherwise we would have $P^k(x, \Vo \backslash N) = 0$). And since $\Sxk(\w) \notin N$, the controllability matrix $\Cxk(\w)$ has rank $n$.
\qed

\subsubsection*{Proof of Proposition~\ref{pr:support}}

We first prove that a point $\xstar$ of the support of $\psi$ is globally attracting. By definition of the support, there exists $N$ a closed set containing $\xstar$ with full $\psi$-measure (i.e.\ $\psi(N^c) = 0$). Let $\Uo$ be an open neighborhood of $\xstar$, then $\psi(\Uo) > 0$ (otherwise the closed set $N \backslash \Uo$ would have full $\psi$-measure without containing $\xstar$, so $\xstar$ would not be in the support of $\psi$ which is a contradiction). This imply that for all $y \in \xdom$, $\sum_{k \in \Zplusstrict} P^k(y, \Uo) > 0$, and so by Proposition~\ref{pr:pointproba} that there exists $k \in \Zplusstrict$ and $\w \in \Oxk$ a $k$-steps path from $y$ to $\Uo$. Therefore by Proposition~\ref{pr:GA}, $\xstar$ is globally attracting.

Conversely, the fact that a globally attracting point $\xstar \in \xdom$ is part of $\supp \psi$ is a direct consequence of Propositions~\ref{pr:GA} and \ref{pr:pointproba}, which shows that \eqref{eq:suppGA} holds.

We now prove that $\supp \psi \subset \overline{A_+(\xstar)}$ for $\xstar \in \xdom$ a globally attracting state. Take $\ystar \in \xdom$ a globally attracting state, by Proposition~\ref{pr:GA}, $\ystar \in \overline{A_+(\xstar)}$ which implies that the set $\lbrace z^* \in \xdom | z^* \textrm{ is globally attracting} \rbrace$ is included in $\overline{A_+(\xstar)}$ which itself as we have shown equals $\supp \psi$.

Finally we prove that $\overline{A_+(\xstar)} \subset \supp \psi$ by showing that any $y \in \overline{A_+(\xstar)}$ is globally attracting. Indeed if this holds, then $\overline{A_+(\xstar)} \subset \lbrace z^* \in \xdom | z^* \textrm{ is globally attracting} \rbrace$ which with \eqref{eq:suppGA} concludes the proof. For all $z \in \xdom$ by Proposition~\ref{pr:GA} for all $\Uo$ open neighborhood of $\xstar$ there exists $t_0 \in \Zplusstrict$ and $\u \in \Oxk[t_0][z]$ a $t_0$-steps path from $z$ to $\Uo$. For $y \in \overline{A_+(\xstar)}$ for all $\Vo$ open neighborhood of $y$ there exists $k\in \Zplus$ and $y_k \in A_+^k(\xstar)$ such that $y_k \in \Vo$. If $k = 0$, then $\xstar \in \Vo$ so by taking $\Uo$ small enough to be contained in $\Vo$, $\u$ is a $t_0$-steps path from $z$ to $\Vo$. Else there exists $\w_k \in \Oxk[k][\xstar]$ for which $\Sxk[k][\xstar](\w_k) = y_k$. By continuity of the function $(x, \w) \mapsto \Sxk(\w)$ (implied by the continuity of $F$, lower semi-continuity of the function $(x,w) \mapsto p_x(w)$ and Lemma~\ref{lm:scp}) there exists $\Ro$ an open neighborhood of $\xstar$ such that for $x \in \Ro$, $\Sxk(\w_k)$ is close enough to $y_k$ to be in $\Vo$. Hence by taking $\Uo$ small enough to be contained in $\Ro$, $(\u, \w_k)$ is a $(t_0+k)$-steps path from $z$ to $\Vo$. In any case, for all $z \in \xdom$ and $\Vo$ neighborhood of $y$, there exists for some $T \in \Zplusstrict$ a $T$-steps path from $z$ to $\Vo$, which by Proposition~\ref{pr:GA} implies that $y$ is globally attracting. 
\qed

\subsubsection*{Proof of Theorem~\ref{th:ap}}
Assume that for all $x \in \xdom$, there exists $k \in \Zplusstrict$ and $\w \in \Oxk$ for which $\rank \Cxk(\w) = n$, and assume that $\xstar$ is steadily attracting; we will prove that $\markov$ is $\varphi$-irreducible and aperiodic. By hypothesis there exists $k \in \Zplusstrict$ and $\wstar \in \Oxk[k][\xstar]$ such that $\rank(\Cxk[k][\xstar](\wstar)) = n$.
According to Proposition~\ref{pr:2.1} $\mathrm{(i)}$, there exists $c > 0$ and open sets $\Uo_{\xstar}$ and $\Vo_{\xstar}^{\wstar}$ containing respectively $\xstar$ and $\Sxk[k][\xstar](\wstar)$ and such that
\begin{equation} \label{eq:xstarPsup}
P^k(y, A) \geq c \mleb( A \cap \Vo_{\xstar}^{\wstar})  \textrm{ for all } y \in \Uo_{\xstar},  A \in \borel(\xdom).
\end{equation}
For all $y \in \xdom$ and $t \in \Zplusstrict$ we may develop $P^{t+k}(y, A)$ as
\begin{align} \label{eq:Psupapp}
P^{t + k}(y, A) &= \int_{\xdom} P^{t}(y, dz) P^k(z, A) \nonumber \\
 &\geq \int_{\Uo_{\xstar}} P^{t}(y, dz) P^k(z, A) \nonumber  \\
 &\geq \int_{\Uo_{\xstar}} P^{t}(y, dz) c \mleb(A \cap \Vo_{\xstar}^{\wstar}) =  P^{t}(y, \Uo_{\xstar}) c \mleb(A \cap \Vo_{\xstar}^{\wstar}) 
\end{align}
As $\xstar$ is globally attractive, \del{that conditions A1---A4 hold and that $F$ is continuous,} by Corollary~\ref{cr:GAreachability} $\xstar$ is also reachable. Hence for all $y \in \xdom$ there exists a $t \in \Zplusstrict$ such that $P^t(y, \Uo_{\xstar}) > 0$, and therefore by \eqref{eq:Psupapp} the measure $\bar{\varphi} : A \mapsto \mleb(A \cap \Vo_{\xstar}^{\wstar})$ is an irreducibility measure. 

Since the Markov chain $\markov$ is $\varphi$-irreducible, according to \cite[Theorem~5.4.4]{meyntweedie} there exists $d \in \Zplusstrict$ and disjoint sets $(D_i)_{i = 0, \ldots, d-1} \in \borel(\xdom)^d$ such that 
\begin{enumerate}
\item[(a)] for $x \in D_i$, $P(x, D_{i+1 \mod d}) = 1  , ~~i = 0, \ldots, ~d-1$ (mod d) ;
\item[(b)] $\varphi\left( \left( \bigcup_{i=0}^{d-1} D_i \right)^c \right) = 0$.
\end{enumerate}
Note that $\mathrm{(b)}$ is usually stated with respect to the maximal irreducibility measure, which implies $\mathrm{(b)}$ for any irreducibility measure. Point $\mathrm{(b)}$ applied to $\bar{\varphi}$ implies the existence of  $i \in \lbrace 0, \ldots, d-1\rbrace$ for which $\bar{\varphi}(D_i) > 0$, i.e.\ $\mleb(D_i \cap \Vo_{\xstar}^{\wstar}) > 0$.\mathnote{This will actually hold for any $D_j$ by globaly attractivity contamination.} Also since $\xstar$ is steadily attracting, for all $y \in \xdom$ there exists $t_0 \in \Zplusstrict$ such that for all $t \geq t_0$, there exists a $t$-steps path from $y$ to $\Uo_{\xstar}$ and so by Proposition~\ref{pr:pointproba} $P^{t}(y, \Uo_{\xstar}) > 0$. Together with \eqref{eq:Psupapp}, we obtain $P^{t + k}(y, D_i) \geq P^{t}(y, \Uo_{\xstar}) c \mleb(D_i \cap \Vo_{\xstar}^{\wstar}) > 0$ for all $t \geq t_0$. Hence for $y \in D_i$ and for $m$ large enough, $P^{md + 1}(y, D_{i}) > 0$, which with $\mathrm{(a)}$ and the fact that the $D_i$ are disjoints sets implies $d = 1$, i.e.\ $\markov$ is aperiodic.

Now suppose that no steadily attracting state exists, we will show that $\markov$ is not irreducible and aperiodic. If no globally attracting state exists, then by Theorem~\ref{th:irr} the Markov chain is not irreducible. Otherwise let $\xstar$ denote a globally attracting state, and take $k \in \Zplusstrict$ and $\wstar \in \Oxk[k][\xstar]$ for which $\rank \Cxk[k][\xstar](\wstar) = n$. Then according to Proposition~\ref{pr:GAEA}, the point $\ystar := \Sxk[k][\xstar](\wstar)$ is attainable, and according to Theorem~\ref{th:irr-practical}, $\markov$ is $\varphi$-irreducible. Since $\ystar$ is attainable yet not steadily attracting, by Proposition~\ref{pr:EA2AA}, the set
$$E := \lbrace a \in \Zplusstrict | \exists\, t_0 \in \Zplus,  \forall t \geq t_0,  \ystar \in A_+^{at}(\ystar) \rbrace$$
has a $\gcd$ larger than $1$, and there exists a $d$-cycle with $d := \gcd(E) > 1$. Hence the period of the Markov chain is at least $d$, and so $\markov$ is not aperiodic.
\qed

\subsubsection*{Proof of Theorem~\ref{th:ap-practical}}
Suppose that $\xstar$ is a steadily attracting point, and there exists $k \in \Zplusstrict$ and $\wstar \in \Oxk[k][\xstar]$ for which $\rank \Cxk[k][\xstar](\wstar) = n$. Then according to Proposition~\ref{pr:gacontrol}, for all $x \in \xdom$ there exists $t \in \Zplusstrict$ and $\w \in \Oxk[t]$ for which $\rank \Cxk[t](\w) = n$. According to Theorem~\ref{th:ap}, $\markov$ is therefore a $\varphi$-irreducible and aperiodic Markov chain, and by Theorem~\ref{th:irr-practical}, it is a $T$-chain for which compact sets are petite. Finally, according to \cite[Theorem~5.5.7]{meyntweedie} since $\markov$ is irreducible and aperiodic, every petite set is small.
\qed

\subsubsection*{Proof of Proposition~\ref{pr:xnesdens}}
Let $\U_1=(U_1^1, \ldots, U_1^{\lambda}) \in \R^{n \lambda}$ with each $U_1^i$ following a standard multivariate normal distribution and $\{U_1^i : 1 \leq i \leq \lambda \}$ i.i.d. Let $W_1=\alpha(z,\U_1)$, let $\mu$ satisfying $1 < \mu < \lambda$, $w^\mu \in \R^n$ and $C(w^\mu)$ denote the event 
$\{f(z + w^\mu ) \leq f(z + \U_1^i ), \mbox{ for all } i= \mu + 1, \ldots \lambda  \}$. The following holds
\begin{equation}\label{eq:pc}
\Pr(C(w^\mu)) = \left( \int \1_{f(z+w^\mu) \leq f(z+w)}p_{\cal N}(w) dw \right)^{\lambda - \mu} = (1 - Q_z^f(w^\mu))^{\lambda - \mu} \enspace.
\end{equation}
Let $\mathfrak{S}_\lambda$ for $\lambda \in \Zplusstrict$ denote the set of permutation with $\lambda$ elements. For $\sigma \in \mathfrak{S}_{\lambda - \mu}$, let $C_\sigma(w^\mu)$ denote the event 
$
C_\sigma(w^\mu) = \{ f(z + w^\mu) < f(z + U_1^{\mu + \sigma(1)}) < \ldots < f(z + U_1^{\mu + \sigma(\lambda - \mu)}) \}
$. Since the level sets of $f$ are Lebesgue negligible, 
\begin{equation}\label{eq:pcc}
\Pr(C(w^\mu)) = \Pr \left( \bigcup_{\sigma \in \mathfrak{S}_{\lambda - \mu}} C_\sigma(w^\mu) \right) = \sum_{\sigma \in \mathfrak{S}_{\lambda - \mu}} \!\! \Pr( C_\sigma(w^\mu)) = (\lambda - \mu)! \Pr(C_{I}(w^\mu))
\end{equation}
where $I$ denotes the identity permutation.
Let $x_1 \in \R^n, \ldots, x_\mu \in \R^n$ and let denote for $x,y \in \R^n$, $x \leq y$ for $([x]_1 \leq [y]_1 , \ldots , [x]_n \leq [y]_n)$ where $[x]_i$ is the $i^{\rm th}$ coordinate of the vector $x$.
\begin{equation*}
\begin{split}
\Pr( W_1^1 & \leq x_1 ,  \ldots , W_1^\mu \leq x_\mu )  \\
& = \sum_{\sigma \in \mathfrak{S}_{\lambda}} 
\Pr \left( \{ U_1^{\sigma(1)} \leq x_1, \ldots, U_1^{\sigma(\mu)} \leq x_\mu \} \bigcap \{ f(z + U^{\sigma(1)}_1) < \ldots < f(z + U^{\sigma(\lambda)}_1)  \}  \right) \\
& = \lambda ! \Pr \left( \{ U_1^{1} \leq x_1, \ldots, U_1^{\mu} \leq x_\mu \} \bigcap\{ f(z + U^{1}_1) < \ldots < f(z + U^{\lambda}_1)  \}  \right) \enspace.
\end{split}
\end{equation*}
By cutting the event $  \{ f(z + U^{1}_1) < \ldots < f(z + U^{\lambda}_1)  \} $ into $\{ f(z + U^{1}_1) < \ldots < f(z + U^{\mu}_1)  \}  \bigcap\{ f(z + U_{1}^{\mu}) < \ldots < f(z + U^{\lambda}_1)  \} $ we find that
\begin{equation*}
\begin{split}
& \Pr(  W_1^1  \leq x_1 ,  \ldots , W_1^\mu \leq x_\mu )   \\
& = \lambda ! \int \!\! \1_{w^1 \leq x_1} \ldots \1_{w^\mu \leq x_\mu} \1_{f(z+w^1) < \ldots < f(z+w^\mu)} \\ & \hspace{5cm} \1_{f(z+w^\mu) < \ldots < f(z+w^\lambda)}  p_{\cal N}(w^1) \! \ldots  \! p_{\cal N}(w^\lambda) dw^1 \! \! \ldots dw^\lambda\\
&= \lambda ! \int \1_{w^1 \leq x_1} \ldots \1_{w^\mu \leq x_\mu} \1_{f(z+w^1) < \ldots < f(z+w^\mu)} \Pr(C_{\new{I}}(w^\mu)) p_{\cal N}(w^1) \ldots p_{\cal N}(w^\mu) dw^1 \ldots dw^\mu.
\end{split}
\end{equation*}
Using \eqref{eq:pc} and \eqref{eq:pcc} we find
\begin{equation*}
\begin{split}
& \Pr( W_1^1  \leq x_1 ,  \ldots , W_1^\mu \leq x_\mu )  \\
& = \frac{\lambda !}{(\lambda - \mu)!} \int_{-\infty}^{x_1} \!\!\!\!\! \ldots  \int_{-\infty}^{x_\mu}  \!\!\! \1_{f(z+w^1) < \ldots < f(z+w^\mu)}  (1 - Q_z^f(w^\mu))^{\lambda - \mu} p_{\cal N}(w^1) \ldots p_{\cal N}(w^\mu) dw^1 \ldots dw^\mu
\end{split}
\end{equation*}
where in the previous equation, the integrals between $-\infty$ and the vector $x_i$ stand for $\int_{-\infty}^{[x_i]_1} \ldots \int_{-\infty}^{[x_i]_n}$ where $[x_i]_j$ denotes the $j^{\rm th}$ coordinate of the vector $x_i$. We directly deduce the expression of the density for $\mu > 1$ and $\mu < \lambda$ given in \eqref{dens:mu}. 
When $\mu=\lambda$, then the density of $(W_1^1,\ldots,W_1^\lambda)$ equals $p_{\cal N}(w^1) \ldots p_{\cal N}(w^\lambda)$ that can also be written as
$$
\lambda ! \1_{f(z+w^1) < \ldots < f(z+w^\mu)}  p_{\cal N}(w^1) \ldots p_{\cal N}(w^\lambda)
$$
which is the expression in \eqref{dens:mu} for $\mu = \lambda$ (by using that the levels sets of $f$ are Lebesgue negligible, and so that with probability $1$, one of the $\lambda!$ possible \new{strict} orderings of the $\{w_i: i = 1, \ldots, \lambda\}$ according to $f(z+w^i)$ occurs).

In the case where $\mu=1$, the indicator $\1_{f(z+w^1) < \ldots < f(z+w^\mu)} $ reduces to $1$ and we deduce the expression in \eqref{dens:one}.
\qed

\subsubsection*{Proof of Lemma~\ref{lem:PCSA}}

The proof of Lemma~\ref{lem:PCSA} $\mathrm{(ii)}$ is a direct consequence of the following proposition.

\begin{proposition} \label{pr:saab}
Suppose that assumptions $\mathrm{A1-A5}$ hold and that for all $x \in \xdom$ there exists $k \in \Zplusstrict$ and $\wstar \in \Oxk[k][\xstar]$ such that $\rank \Cxk[k][\xstar](\wstar) = n$. Then there exists a steadily attracting state if and only if there exists a globally attracting state $\xstar \in \xdom$, $(a, b) \in \Zplusstrict^2$ with $\gcd(a,b) = 1$, $\w_a \in \Oxk[a][\xstar]$ and $\w_b \in \Oxk[b][\xstar]$ such that $\Sxk[a][\xstar](\w_a) = \Sxk[b][\xstar](\w_b) = \xstar$.
\end{proposition}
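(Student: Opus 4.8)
The plan is to prove the two implications separately. The easy direction is ($\Leftarrow$): suppose $\xstar$ is globally attracting, $\gcd(a,b)=1$, and there are loops $\w_a\in\Oxk[a][\xstar]$, $\w_b\in\Oxk[b][\xstar]$ with $\Sxk[a][\xstar](\w_a)=\Sxk[b][\xstar](\w_b)=\xstar$. Concatenating these loops we get $\xstar\in A_+^{ma+nb}(\xstar)$ for every $m,n\in\Zplus$. Since $\gcd(a,b)=1$, the numerical semigroup generated by $a$ and $b$ contains every integer $\geq (a-1)(b-1)$ (the Chicken McNugget / Frobenius bound), so $\xstar\in A_+^{t}(\xstar)$ for all $t\geq t_0:=(a-1)(b-1)$. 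In particular $a=a\cdot 1\in E$ and $1\in E$ in the notation of Proposition~\ref{pr:EA2AA} once we know $\xstar$ is attainable; but more directly, combining ``$\xstar$ returns to itself for all large times'' with global attractivity gives steady attraction: for any $y\in\xdom$ and any neighborhood $\Uo$ of $\xstar$, Proposition~\ref{pr:GA}~(ii) gives a $k_1$-steps path from $y$ to $\xstar$ provided we first use Proposition~\ref{pr:GAEA} (the rank condition at $\xstar$, which holds by hypothesis) to upgrade ``globally attracting'' to ``$\xstar$ is attainable'', i.e.\ there is a genuine $k_1$-steps path from $y$ \emph{to the point} $\xstar$; then appending a $t$-steps loop at $\xstar$ for every $t\geq t_0$ produces a $(k_1+t)$-steps path from $y$ into $\Uo$ for all such $t$, which is exactly steady attraction of $\xstar$. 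Here I must invoke Proposition~\ref{pr:GAEA}: the rank condition applied at a globally attracting $\xstar$ shows $\Sxk[k][\xstar](\wstar)$ is attainable, and since we may take the loop $\w_a$ to realize $\Sxk[a][\xstar](\w_a)=\xstar$, $\xstar$ itself is attainable.

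For the hard direction ($\Rightarrow$): suppose a steadily attracting state exists. By Proposition~\ref{pr:SGA}~(iii) (using $F$ is $C^0$) every globally attracting state is then steadily attracting; pick any globally attracting $\xstar$, so $\xstar$ is steadily attracting. By hypothesis the rank condition holds at $\xstar$ (indeed at every $x$), so by Proposition~\ref{pr:GAEA} the point $\ystar:=\Sxk[k][\xstar](\wstar)$ is attainable, and in fact — arguing as in the proof of Proposition~\ref{pr:GAEA} — there is a neighborhood $\Uo$ of $\xstar$ from which $\ystar$ is reached in exactly $k$ steps. Now I want to produce loops of coprime lengths \emph{based at a globally attracting state}. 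Consider the set $E=\{a\in\Zplusstrict \mid \exists t_0,\ \forall t\geq t_0,\ \ystar\in A_+^{at}(\ystar)\}$ attached to the attainable state $\ystar$. By Proposition~\ref{pr:EA2AA}~(i), $E$ is nonempty and closed under gcd, so $d:=\gcd(E)\in E$; fix $t_0$ with $\ystar\in A_+^{dt}(\ystar)$ for all $t\geq t_0$. Since $\ystar$ is attainable and $\xstar$ is steadily attracting, steady attraction of $\xstar$ forces $d=1$: if $d>1$, Proposition~\ref{pr:EA2AA}~(iii) combined with $\varphi$-irreducibility (which holds by Theorem~\ref{th:irr-practical}, since $\xstar$ is globally attracting and the rank condition holds at $\xstar$) would give a nontrivial $d$-cycle, contradicting that steady attraction of $\xstar$ makes $P^{t}(y,\cdot)$ dominate a fixed nontrivial measure for all large $t$ and all $y$ (the argument sketched after Theorem~\ref{th:ap-practical}). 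Alternatively and more cleanly: Proposition~\ref{pr:EA2AA}~(ii) says $\gcd(E)=1$ implies $\ystar$ steadily attracting, and the contrapositive plus the fact that a steadily attracting state exists (hence by Proposition~\ref{pr:SGA}(iii) the globally attracting $\ystar$ — once we check $\ystar$ is globally attracting, which it is since it is attainable — is steadily attracting) forces $\gcd(E)=1$. So $1\in E$: there is $t_1$ with $\ystar\in A_+^{t}(\ystar)$ for all $t\geq t_1$. Picking two consecutive lengths $a:=t_1$ and $b:=t_1+1$, these are coprime and realize loops at $\ystar$. Finally transport back to a globally attracting state: $\ystar$ is globally attracting (attainable $\Rightarrow$ globally attracting, by comparing \eqref{eq:bla} with Proposition~\ref{pr:GA}(i)), and we have $\w_a\in\Oxk[a][\ystar]$, $\w_b\in\Oxk[b][\ystar]$ with $\Sxk[a][\ystar](\w_a)=\Sxk[b][\ystar](\w_b)=\ystar$ and $\gcd(a,b)=1$, which is the desired conclusion with $\ystar$ in place of $\xstar$.

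The main obstacle I anticipate is bookkeeping the loop lengths correctly: $E$ is defined via ``$\ystar\in A_+^{at}(\ystar)$ for all large $t$'', not ``for all $t$'', so one has to be careful extracting an honest pair of coprime return times, and in the forward direction one also needs that the rank condition genuinely transfers to $\ystar$ (it does, via Proposition~\ref{pr:gacontrol} or directly since it holds at every $x$). A secondary subtlety is making sure all invocations of Proposition~\ref{pr:GAEA} and the attainability of $\ystar$ are legitimate — this requires assumption $\mathrm{A5}$ and the rank condition, both of which are in the hypotheses. The continuity of $F$ ($C^0$, implied by $C^1$ via $\mathrm{A5}$) is needed for Proposition~\ref{pr:SGA}~(iii) and for Corollary~\ref{cr:GAreachability}.
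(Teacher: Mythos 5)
The genuine gap is in your ($\Leftarrow$) direction, at the sentence ``since we may take the loop $\w_a$ to realize $\Sxk[a][\xstar](\w_a)=\xstar$, $\xstar$ itself is attainable.'' This is an illegitimate use of Proposition~\ref{pr:GAEA}: that proposition gives attainability of $\Sxk[k][\xstar](\wstar)$ only for a pair $(k,\wstar)$ at which $\rank \Cxk[k][\xstar](\wstar)=n$, and nothing in the hypotheses controls $\rank \Cxk[a][\xstar](\w_a)$ --- the rank condition is assumed for \emph{some} control at each point, not along the particular loop $\w_a$. Global attractivity only lets trajectories approach $\xstar$, never hit it exactly, so attainability of $\xstar$ (an exact finite path from every $y\in\xdom$ to the point $\xstar$) does not follow, and with it collapses your construction of $(k_1+t)$-steps paths from arbitrary $y$ into a neighborhood of $\xstar$. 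This is precisely the difficulty the paper's proof is built to circumvent: it works at $\ystar:=\Sxk[k][\xstar](\wstar)$, which \emph{is} attainable, transports the loops $\w_a,\w_b$ from $\xstar$ to admissible approximate loops started at points near $\xstar$ (using lower semi-continuity of $x\mapsto \pxk[ca][x](\w_a,\ldots,\w_a)$ and continuity of $x\mapsto \Sxk[ca][x](\w_a,\ldots,\w_a)$), closes them through the neighborhood $\Vo_{\xstar}$ of Proposition~\ref{pr:GAEA}~(i) by the exact $k$-steps path back to $\ystar$, and uses B\'ezout to produce two elements of $E$ differing by one, whence $\gcd(E)=1$ and Proposition~\ref{pr:EA2AA}~(ii) yields that $\ystar$ is steadily attracting. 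Without such a continuity/transport argument (or an alternative route, e.g.\ proving aperiodicity directly and invoking Theorem~\ref{th:ap}), your ($\Leftarrow$) implication is unproven.

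Your ($\Rightarrow$) direction is essentially sound but more roundabout than the paper's: rather than routing through $\varphi$-irreducibility, the $d$-cycle of Proposition~\ref{pr:EA2AA}~(iii) and the aperiodicity machinery of Theorem~\ref{th:ap}, the paper obtains exact returns directly --- steady attraction of $\xstar$ gives, for every $t\geq T$, a $t$-steps path from $\ystar$ into the neighborhood of $\xstar$ supplied by Proposition~\ref{pr:GAEA}~(i), closed by the exact $k$-steps path to $\ystar$, so $\ystar\in A_+^{t+k}(\ystar)$ for all $t\geq T$, and two coprime return times are then picked by hand. Also beware that your ``cleaner alternative'' is logically invalid: from Proposition~\ref{pr:EA2AA}~(ii) ($\gcd(E)=1$ implies $\ystar$ steadily attracting) together with ``$\ystar$ is steadily attracting'' you cannot deduce $\gcd(E)=1$; that is the converse, not the contrapositive. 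Your primary contradiction argument (a $d$-cycle with $d>1$ versus the aperiodicity implied by steady attraction and the rank condition) does stand, and is not circular since Theorem~\ref{th:ap} is proved independently of this proposition.
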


\begin{proof}
Suppose that $\xstar$ is a steadily attracting state. Under the conditions of Proposition~\ref{pr:saab} there exists $k \in \Zplusstrict$ and $\wstar \in \Oxk[k][\xstar]$ for which $\rank \Cxk[k][\xstar](\wstar)$, and by Proposition~\ref{pr:GAEA} $\ystar := \Sxk[k][\xstar](\wstar)$ is attainable, and there exists $\Vo_{\xstar}$ a neighborhood of $\xstar$ such that for all $x \in \Vo_{\xstar}$ there exists $\w \in \Oxk$ for which $\Sxk(\w) = \ystar$. Since $\xstar$ is steadily attracting, there exists $T \in \Zplusstrict$ such that for all $t \geq T$ there is a $t$-steps path from $\ystar$ to $\Vo_{\xstar}$, which can be completed by a $k$-steps path to $\ystar$. So for all $t \geq T$ there exists $\w \in \Oxk[t+k][\ystar]$ for which $\Sxk[t+k][\ystar](\w) = \ystar$, which proves the first implication by choosing $t_1 \geq T$ and $t_2 \geq T$ such that $\gcd(t_1 + k, t_2 +k) = 1$.

Now let $\xstar$ be a globally attracting state, $k \in \Zplusstrict$ and $\wstar \in \Oxk[k][\xstar]$ for which $\rank \Cxk[k][\xstar](\wstar) = n$. Then by Proposition~\ref{pr:GAEA} $\ystar := \Sxk[k][\xstar](\wstar)$ is an attainable point, and there exists $\Vo_{\xstar}$ a neighborhood of $\xstar$ such that for all $x \in \Vo_{\xstar}$ there exists $\w \in \Oxk$ for which $\Sxk(\w) = \ystar$. Suppose that there exists $(a,b) \in \Zplusstrict^2$, $\w_a$ and $\w_b$ such as stated in Proposition~\ref{pr:saab}. We first show that $\ystar$ is steadily attracting using Proposition~\ref{pr:EA2AA}, by showing that the set $E$ in \eqref{eq:E} has two elements $\tilde{a}$ and $\tilde{b}$ with $\gcd(\tilde{a}, \tilde{b}) = 1$.

By B\'{e}zout's identity, there exists $(c, d) \in \mathbb{Z}^2$ such that $ca + db = 1$. Suppose without loss of generality that $d < 0$ (and so $c > 0$).
By lower semi-continuity of the function $\x \mapsto \pxk[ca](\w_a, \ldots, \w_a)$ \new{and continuity of the function $x \mapsto \Sxk[ca](\w_a, \ldots, \w_a)$} there exists $\epsilon_a > 0$ such that for all $x \in B(\xstar, \epsilon_a)$, $(\w_a, \ldots, \w_a) \in \Oxk[x][ca]$ \new{and $\Sxk[c a][x](\w_a, \ldots, \w_a) \in B(\xstar, \epsilon)$}.
\del{By continuity of $F$ for all $\epsilon > 0$ there exists $\eta_a > 0$ such that for all $x \in B(\xstar, \eta_a)$, $\Sxk[c a][x](\w_a, \ldots, \w_a) \in B(\xstar, \epsilon)$ (using that $\Sxk[c a][\xstar](\w_a, \ldots, \w_a) = \xstar$).} We chose $\epsilon$ small enough to ensure that $B(\xstar, \epsilon) \subset \Vo_{\xstar}$. Since $\xstar$ is globally attracting, there exists $t\in \Zplusstrict$ and $\u \in \Oxk[t][\ystar]$ such that $\Sxk[t][\ystar](\u) \in B(\xstar, \del{\min(}\epsilon_a\del{, \eta_a)})$, and so $\Sxk[t+ca][\ystar](\u, \w_a, \ldots, \w_a) \in \Vo_{\xstar}$. Hence there exists $\w \in \Oxk[k][{\Sxk[t+ca][\ystar](\u, \w_a, \ldots, \w_a)}]$ such that $\Sxk[t+ca][\ystar](\u, \w_a, \ldots, \w_a, \w) = \ystar$. Since the path $t+ca+k$-steps path $(\u, \w_a, \ldots, \w_a, \w)$ can be retaken infinitely many times, $t+ca+k \in E$.
The same reasonning holds for $-db$, and so $t-db+k \in E$ (the same $t$ can be obtained by taking a $t$-steps path $\u$ ensuring that $\Sxk[t][\ystar](\u) \in B(\xstar, \min(\epsilon_a, \del{\eta_a,} \epsilon_b\del{, \eta_b}))$. Finally, since $(t + ca + k) - (t - db + k) = 1$, $\gcd(E) = 1$ and so by Proposition~\ref{pr:EA2AA} $\ystar$ is steadily attracting.

\end{proof}

\bibliography{biblio}
\bibliographystyle{plain}


\end{document}